\documentclass[preprint,12pt]{elsarticle}
\usepackage{anyfontsize}
\usepackage{mathrsfs}
\usepackage{amsfonts}
\usepackage{arydshln}
\usepackage{amssymb}
\usepackage{amsmath}
\usepackage{amsthm}
\usepackage{multirow}
\usepackage{cases}
 \usepackage{bm}
 \usepackage{enumitem}
\usepackage[figuresright]{rotating}
\usepackage{booktabs}
\usepackage{adjustbox}
\usepackage{tikz}
\usepackage{graphicx}
\usepackage{subcaption}  
\usepackage{appendix}

\usepackage{amssymb}
\theoremstyle{plain}

\newtheorem{assumption}{Assumption}

\theoremstyle{definition}
\newtheorem{definition}{Definition}
\newtheorem{example}{Example}[section]
\newtheorem{remark}{Remark}[section]
\newtheorem{theorem}{Theorem}[section]
\newtheorem{lemma}{Lemma}[section]

\allowdisplaybreaks[2]

\begin{document}

\begin{frontmatter}

\title{\textbf{ Finite Volume Element Method on Curved-Edge Meshes}}


\author[1]{Xiaoxiao Chen}
\author[2]{Zexi Hu}
\author[2]{Zhiming Gao}
\author[1]{Junliang Lv}
\author[1]{Xiang Wang}
\author[1]{Hongtao Yang\corref{cor1}}

\address[1]{School of Mathematics, Jilin University, Changchun, China}
\address[2]{Laboratory of Computational Physics, Institute of Applied Physics and Computational Mathematics, Beijing, China}

\cortext[cor1]{\ Corresponding author} \ead{hongtao@jlu.edu.cn}

\begin{abstract}
This paper proposes and analyzes a high-order finite volume element method on curved-edge quadrilateral meshes for elliptic equations. Unlike existing theories, which are primarily based on straight-edge meshes, this study is the first to establish an analysis of the stability and optimal convergence of the finite volume method on curved-edge meshes. By constructing a dual mesh based on Gaussian points, we overcome the accuracy degradation issues caused by geometric deformation and Jacobian non-uniformity of curved-edge meshes. We prove the coercivity of the discrete bilinear form under weak mesh regularity conditions, thereby obtaining an optimal error estimate in the energy norm. Furthermore, using orthogonality and the Aubin-Nitsche technique, we derive an optimal $L^2$ error estimate. Numerical experiments cover problems with constant and anisotropic coefficients, different dual partition strategies, complex curved boundary domains, and interfaces with large deformations. Numerical results indicate that this method consistently achieves the optimal convergence order in both the $H^1$ and $L^2$ norms on a variety of curved-edge meshes. Compared to straight-edge meshes, curved-edge meshes offer significant advantages in approximating complex curved boundaries and demonstrate better resistance to distortion in cases involving sudden changes in coefficients and large deformations at interfaces. This paper provides a unified theoretical framework for the finite volume element method on curved-edge meshes and verifies the efficiency and robustness of the proposed method.

\end{abstract}

\begin{keyword}
finite volume element method\sep curved-edge meshes \sep error analysis
\end{keyword}

\end{frontmatter}

\section{Introduction}

Due to its inherent local conservation properties, the finite volume element method (FVEM) has been widely applied in various fields of science and engineering. To date, substantial and well-established research has been conducted on the stability analysis \cite{cwx12,cxz15,ll99,lf96,st93,zz12}, error estimates \cite{bwl16,clz02,ell02,hx98,lyz15,zz14}, and superconvergence \cite{czz13,czz15,cz94,ll12} of this method. However, these theories are limited to straight-edge meshes, and there remains a gap in the formulation and theoretical analysis of the method for curved-edged meshes. Therefore, the motivation of this paper is to develop a high-order finite volume element scheme suitable for curved-edged meshes and to analyze its stability and convergence.

Curved-edge meshes offer significant advantages when addressing physical problems involving complex geometries and multi-material interfaces, such as fluid-structure interaction \cite{ra11,t01}, multi-phase flow \cite{te04}, and problems with highly deformable interfaces \cite{kf18}. The computational domains for such problems typically contain curved boundaries and moving interfaces. To capture these geometric features, traditional straight-edge meshes often require excessive refinement or the adoption of piecewise linear approximations to approximate the true boundaries, both of which inevitably introduce additional geometric discretization errors. In contrast, curved-edge meshes can describe complex geometries more naturally and accurately, and better conform to boundaries and interfaces. This enhances the resolution of local features in the solution while reducing unnecessary degrees of freedom. Furthermore, when combined with high-order numerical methods, curved-edge meshes can more accurately capture gradient variations in the solution and the distribution at material interfaces, thereby improving the overall accuracy of the numerical simulation.

Although curved-edge meshes offer the advantages described above, to the best of our knowledge, no research has yet been conducted on developing a finite volume element method on them. In fact, constructing a high-precision finite volume element method on curved-edge meshes and establishing the corresponding theoretical analysis inevitably presents several key challenges. The first challenge is how to construct a finite volume element scheme that meets accuracy requirements when the mapping from the reference element to the general physical element is unknown. In practice, for a given curved-edge mesh, the mapping may or may not be known. When the mapping is known, we can directly construct a finite volume element scheme on the reference elements and then transform it to the curved-edge elements using the mapping. However, when the mapping is unknown, we need to design a reasonable approach to construct a high-precision finite volume element scheme. The second challenge is how to construct the corresponding dual mesh on a curved mesh. Research on the finite volume method on straight-edge meshes shows that the geometric quality of the dual elements directly affects numerical accuracy \cite{ll12,wll21,y06,zz14,zz15}. On curved-edge meshes, this degradation in accuracy is often more pronounced due to geometric mismatch errors caused by the non-uniformity of the Jacobian matrix. The third challenge is how to formulate appropriate regularity assumptions for curved-edge meshes to prove the stability of the finite volume element method. Most existing stability results for the FVEM on straight-edge meshes rely on mesh regularity assumptions, such as the $h^{1+\gamma}$ parallelogram condition \cite{lyz15,zz15}. However, such regularity conditions are difficult to apply to curved-edge meshes, and particularly when the mesh undergoes strong geometric deformation, the coercivity of the discrete bilinear form cannot be proven.

To overcome the aforementioned challenges, we propose the following strategies in current paper. When the mapping from reference elements to curved-edge elements is unknown, we construct an appropriate interpolation mapping based on the geometric curvature characteristics of the curved-edge elements to generate approximate curved-edge elements that approximate the true geometry, and establish a finite volume element scheme on these approximate curved-edge elements. Regarding the construction of dual meshes, the complex boundary topologies and irregular geometries of curved-edge elements typically make it impossible to construct dual elements directly on them. To address this, we first construct a dual partition based on Gaussian points on the reference element, and then map it to the curved-edge elements. This approach circumvents the difficulties posed by the complex shapes of curved-edge elements while ensuring computational accuracy. To address the coercivity of a discrete bilinear form, since geometric constraints for curved-edge meshes cannot be directly specified, we propose a regularity condition derived by constraining the derivatives of the Jacobian matrix. Furthermore, unlike previous work \cite{lyz15}, we employ a strategy that combines the orthogonality property of the discrete scheme with the Aubin–Nitsche technique, providing a concise proof of the optimal $L^2$ error estimate. Finally, we designed three sets of numerical experiments to validate the theoretical results.

The paper is organized as follows. We provide the construction of the curved-edge mapping and present the regularity assumptions in Section 2. The finite volume element scheme based on Gaussian dual partition is constructed in Section 3. Section 4 is devoted to the coercivity of the bilinear form. We derive the optimal $H^1$- and $L^2$- error estimates in Section 5. Some numerical examples are shown in Section 6 to validate the theoretical results. Finally, the appendix summarizes the derivative relationships in the mapping from the reference element to curved-edge elements.

Throughout the paper, we  adopt standard Sobolev space notations. For a domain $D$, let $W^{m,p}(D)$ be the Sobolev space equipped with the norm $\parallel\cdot \parallel_{m,p,D}$ and semi-norm $|\cdot|_{m,p,D}$. When $p=2$,  we write
 $W^{m,p}(D)=H^m(D)$, $\parallel\cdot \parallel_{m,2,D}=\parallel\cdot \parallel_{m,D}$ and $|\cdot|_{m,2,D}=|\cdot|_{m,D}$. In particular, $\parallel\cdot\parallel_{0,D}$ stands for the $L^2$ norm in $D$. The notation $\|\cdot\|_{\infty,D}$ is used to denote the infinity norm for functions or matrices. In what follows, $C$ represents any positive constant that independent of mesh size $h$, varying per occurrence.

\section{Preliminary}
In this section, we focus primarily on the construction of interpolation mappings for curved-edge elements when the mapping is unknown, and present regularity assumptions on the Jacobian matrix of curved-edge meshes. 

For a given curved-boundary domain $\Omega\subset\mathbb{R}^2$, let $\mathcal{T}$ be a family of  curved-edge quadrilateral partitions of $\Omega$, consisting of element $K$'s with diameter $h_{K}$. Denote $h=\max_{K\in\mathcal{T}}\{h_K\}$. For each exact element $K\in\mathcal{T}$, let $K^h$ be its geometric interpolation approximation. The family of all such approximate elements constitutes the approximate partition $\mathcal{T}_h$, and the corresponding approximate computational domain is $\Omega_h=\bigcup\limits_{K^h \in \mathcal{T}_h} K^h$, which serves as an approximation to the domain $\Omega$. We assume that the element mappings \(\Psi_K\) are compatible on common edges, namely, the edge parametrizations induced by two neighboring elements coincide up to orientation. In general, we have $\Omega\not\subset\Omega_h$ and $\Omega_h\not\subset\Omega$, but we can ensure that $\partial\Omega_h$ is sufficiently close to $\partial\Omega$ as the mesh is refined.  

For a given curved-edge mesh, the mapping $\Psi_K$ from the reference element $\widehat{K}=[-1,1]\times[-1,1]$ to physical element $K^h$ can be determined in three cases.
\begin{enumerate}
\item[Case I.] The mapping $\Psi_K$ from the reference element to every each curved-edge element is known.   
\item [Case II.] Each edge of the curved-edge element are explicitly known through parametric representations. According to \cite{pg19}, one can build an exact mapping $\Psi_K$ to fit each boundary of the curved-edge element. 
\item [Case III.] The vertex coordinates of the curved-edge element $K$ are known, and at least one of its edges has only an implicit equation and no explicit expression.
    In this case, one can construct an interpolation mapping $\Psi_K$ of polynomial form using vertices, certain points extracted from the implicit edges, and certain points inside the element \cite{cr72}.

\end{enumerate}

\begin{remark}
For case II, the parametric equation of each edge of the curved-edge element $K$ is known, the most standard way to construct the mapping $\Psi_K$ from the reference element to the curved-edge element $K^h$ is to use the edge parametric equation directly. To be specific, assume that the four vertices of the curved-edge quadrilateral are $P_{1}, P_{2}, P_{3}, P_{4}$, and its four edges are $\bm{r}_1(\xi), \bm{r}_2(\xi), \bm{r}_3(\xi), \bm{r}_4(\xi)$ for $\xi \in [0,1]$ with $\bm{r}_i(\cdot)=(x(\cdot),y(\cdot))$, $i=1,2,3,4$.
Each edge parametrization satisfies endpoint compatibility conditions $\bm{r}_i(0) = P_{i}$ and $\bm{r}_i(1) = P_{i+1}$ with $P_5=P_1$. Even if the four edge equations of the curved element are known, the mapping from the reference element to the physical element is not unique. Since the boundary conditions constrain only the edges of the physical element, there are infinitely many possible locations in physical space for the image of each point inside the reference element, provided no additional internal extension rules are applied. To eliminate this uncertainty, we can use the Coons extrapolation method to uniquely construct the internal mapping, which can be expressed as
\begin{equation*}
\Psi_K(\xi,\eta)=(1-\eta)\bm{r}_1(\xi)+\eta \bm{r}_3(1-\xi)+(1-\xi)\bm{r}_4(1-\eta)+\xi \bm{r}_2(\eta)-C(\xi,\eta),
\end{equation*}
where the correction term $C(\xi,\eta)$ is
\begin{equation*}
C(\xi,\eta)=(1-\xi)(1-\eta)P_{1}+\xi(1-\eta)P_{2}+\xi\eta P_{3}+(1-\xi)\eta P_{4}.
\end{equation*}
From the endpoint compatibility conditions, one can easily get
\begin{equation*}
\Psi_K(\xi,0)=\bm{r}_1(\xi),\Psi_K(1,\eta)=\bm{r}_2(\eta),\Psi_K(\xi,1)=\bm{r}_3(1-\xi), \Psi_K(0,\eta)=\bm{r}_4(1-\eta),
\end{equation*}
which implies that the constructed mapping fits the curved edges of the physical element $K^h$ exactly, i.e., $K=K^h$.
\end{remark}

\begin{remark}
For Case III, we present a procedure for constructing the target mapping; see also \cite{cr72}. 
Let $K$ be a curved-edge quadrilateral element. We assume that only the implicit equations
$\zeta_\ell(x,y)=0$ $(\ell=1,2,3,4)$
of its four curved edges and the four vertices of $K$ are known. Our goal is to construct an approximate curved-edge element $K^h$ that approximates $K$, and defined a bi-$k$ polynomial mapping from the reference element $\widehat K$ onto $K^h$. The construction is divided into the following three steps.
\begin{enumerate}
\item[\textbf{1)}] \textbf{Construction of the associated straight-edge element.}
We first connect the four vertices of the curved-edge element $K$ in sequence and obtain a straight-edge quadrilateral element $K'$. Let $
\Lambda_K:\widehat K\to K'$
be the bilinear mapping from the reference element $\widehat K$ onto $K'$. This mapping serves as the underlying straight-edge mapping; see Figure~\ref{fig30}.

\item[\textbf{2)}] \textbf{Determination of interpolation nodes.}
We choose the standard tensor-product Lagrange interpolation nodes $\{\widehat z_{ij}\}_{0\le i,j\le k}\subset \widehat K$ to construct the bi-$k$ polynomial mapping. Their images under the bilinear mapping $\Lambda_K$ are denoted by
$z_{ij}^{'}=\Lambda_K(\widehat z_{ij})$ $(0\le i,j\le k)$, which belong to the corresponding straight edge element $K'$. Then, for each boundary interpolation point $z^{'}_{ij}$, we establish a line equation $l_{ij}(x,y)=0$ that passes through $z_{ij}^{'}$ and perpendicular to the corresponding straight edge. By solving the system of equations formed by the line equation and the implicit equation of the curved edge, we obtain the boundary interpolation point $z_{ij}$.
For example, a bi-cubic mapping as shown in Figure~\ref{fig30}, to determine the interpolation point $z_{31}$ on the curve $\zeta_3$, we solve the system of equations
\[
\begin{cases}
l_{31}(x,y)=0,\\
\zeta_3(x,y)=0.
\end{cases}
\]

For the interior interpolation nodes of $K^h$, we set $
z_{ij}=z_{ij}^{'}$.
In this way, all interpolation points $\{z_{ij}\}_{0\le i,j\le k}$ for the curved-edge approximation are determined.

\item[\textbf{3)}] \textbf{Construction of the target mapping.}
Let $\{\widehat L_{ij}\}_{0\le i,j\le k}$ be the tensor-product Lagrange basis functions on $\widehat K$ associated with the nodes $\{\widehat z_{ij}\}_{0\le i,j\le k}$. We define the target mapping
$\Psi_K:\widehat K\to K^h$
by
\[
\Psi_K(\xi)
=
\sum_{i=0}^{k}\sum_{j=0}^{k}
z_{ij}\widehat L_{ij}(\xi).
\]
Since $z^{'}_{ij}=\Lambda_K(\widehat{z}_{ij})$, we may rewrite the mapping as $\Psi_{K}(\xi)=\Lambda_K(\xi)+\Phi_K(\xi)$, where 
\[
\Phi_K(\xi)
=
\sum_{i=0}^{k}\sum_{j=0}^{k}
(z_{ij}-z^{'}_{ij})\widehat L_{ij}(\xi)
\]
 is the non-affine perturbation caused by replacing the straight edges of $K'$ with the true curved edges of $K^h$.
\end{enumerate}
\begin{figure}[htbp]  
    \centering  
    \includegraphics[width=0.8\linewidth, keepaspectratio]{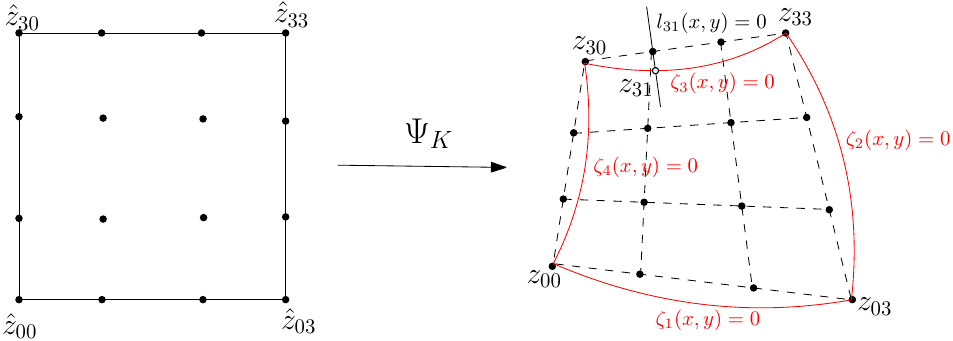}
    \caption{Distribution of interpolation points for quadrilateral element when $k=3$}  
    \label{fig30}  
\end{figure}
\end{remark}

To establish the asymptotic error estimates on curved elements, Ciarlet and Raviart \cite{cr72} proved several essential conditions on the element mapping, which we summarize below.

\begin{lemma}\label{lemma11}
Let \(\Psi_K:\widehat K\to K^h\) be a mapping from the reference element \(\widehat K\) to the physical curved quadrilateral element \(K^h\). Assume that the following conditions hold:
\begin{enumerate}[label=(\roman*), leftmargin=*]
\item The elements \(K'\) form a regular family of quadrilaterals, where \(K'\) is the straight-edge quadrilateral sharing the same four vertices as \(K^h\);
\item The distance between each non-vertex node of \(K^h\) and the corresponding node of \(K'\) tends to zero as \(h_K\to0\);
\item There exists a constant \(C>0\), independent of \(h_K\), such that
\begin{equation}\label{eq:mapp_deriv_bound}
\|D^s\Psi_K\|_{\infty,\widehat K}\le C h_K^s \quad \text{for all } 1\le s\le k+1.
\end{equation}
\end{enumerate}
Then \(\Psi_K\) is a \(C^{k+1}\)-diffeomorphism, and the interpolation estimate
\begin{equation}\label{eq:interp_error}
|v-\Pi_h^k v|_{s,K^h}\le C h_K^{k+1-s}|v|_{k+1,K^h}
\end{equation}
holds for all \(v\in H^{k+1}(K^h)\) and all integers \(s\) with \(0\le s\le k+1\), where \(\Pi_h^k\) is the \(k\)-th order Lagrange interpolation operator.
\end{lemma}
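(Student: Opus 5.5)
The plan is to argue as in Ciarlet--Raviart \cite{cr72}: decompose $\Psi_K=\Lambda_K+\Phi_K$ as in Remark 2.2, show that $\Phi_K$ is a small perturbation of the bilinear map, and then run a pull-back/Bramble--Hilbert argument on $\widehat K$.

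\textbf{Step 1: diffeomorphism.} By hypothesis (i), $\Lambda_K$ is a bilinear map onto a regular quadrilateral. Hence
\begin{equation*}
\|D\Lambda_K\|_{\infty,\widehat K}\le Ch_K,\qquad \|(D\Lambda_K)^{-1}\|_{\infty,\widehat K}\le Ch_K^{-1},\qquad \det D\Lambda_K\ge Ch_K^2 .
\end{equation*}
The perturbation $\Phi_K=\sum_{i,j}(z_{ij}-z'_{ij})\widehat L_{ij}$ lies in the finite-dimensional space $Q_k(\widehat K)^2$. By equivalence of norms there, $\|D\Phi_K\|_{\infty,\widehat K}\le C\max_{i,j}|z_{ij}-z'_{ij}|$.

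Following \cite{cr72}, I read hypothesis (ii) in the reference-scaled sense, as the paper inherits it: the node displacement, measured after the regularizing rescaling by $h_K$, tends to zero. This gives $\|(D\Lambda_K)^{-1}D\Phi_K\|_{\infty,\widehat K}\to0$. Therefore, for $h_K$ small,
\begin{equation*}
D\Psi_K=D\Lambda_K\bigl(I+(D\Lambda_K)^{-1}D\Phi_K\bigr)
\end{equation*}
is invertible, with $\|(D\Psi_K)^{-1}\|_{\infty,\widehat K}\le Ch_K^{-1}$ and $\det D\Psi_K\simeq h_K^2$ uniformly.

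Injectivity of $\Psi_K$ follows from a standard argument. Write $\Psi_K(\widehat x)-\Psi_K(\widehat y)$ as $D\Lambda_K(\widehat x-\widehat y)$ plus a term bounded by $\|D\Phi_K\|\,|\widehat x-\widehat y|$. The convexity of $\widehat K$ and the uniform invertibility of $D\Lambda_K$ then force $\widehat x=\widehat y$. Since $\Psi_K$ is polynomial, it is a $C^{k+1}$ (indeed $C^\infty$) diffeomorphism onto $K^h$. The inverse-function formula $D(\Psi_K^{-1})=(D\Psi_K)^{-1}\circ\Psi_K^{-1}$, differentiated repeatedly using (iii), gives $\|D^s\Psi_K^{-1}\|_{\infty,K^h}\le Ch_K^{-s}$ for $1\le s\le k+1$.

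\textbf{Step 2: transfer of norms.} Set $\widehat v=v\circ\Psi_K$. By definition of the isoparametric interpolant, $\widehat{\Pi_h^kv}=\widehat\Pi\widehat v$, where $\widehat\Pi$ is $Q_k$-Lagrange interpolation on $\widehat K$. By the Fa\`a di Bruno formula, $D^{k+1}\widehat v$ is a sum of terms $D^mv\,\prod_i D^{s_i}\Psi_K$ with $\sum_i s_i=k+1$ and $1\le m\le k+1$. Bound each product by $Ch_K^{k+1}$ using (iii), and change variables with $\det D\Psi_K\simeq h_K^2$. This yields
\begin{equation*}
|\widehat v|_{k+1,\widehat K}\le Ch_K^{k}\sum_{m=1}^{k+1}|v|_{m,K^h}.
\end{equation*}
In the other direction, the bounds on $D^s\Psi_K^{-1}$ give
\begin{equation*}
|w|_{s,K^h}\le Ch_K^{1-s}\sum_{r=1}^{s}|\widehat w|_{r,\widehat K}\quad (s\ge1),\qquad \|w\|_{0,K^h}\le Ch_K\|\widehat w\|_{0,\widehat K}.
\end{equation*}
The reference Bramble--Hilbert lemma gives $\|\widehat v-\widehat\Pi\widehat v\|_{k+1,\widehat K}\le C|\widehat v|_{k+1,\widehat K}$. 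Combining these estimates produces
\begin{equation*}
|v-\Pi_h^kv|_{s,K^h}\le Ch_K^{k+1-s}\sum_{m=1}^{k+1}h_K^{m-k-1}|v|_{m,K^h}.
\end{equation*}

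\textbf{Step 3: reduction to the seminorm (main obstacle).} The lower-order terms $m\le k$ in the last display must be removed, because $\Pi_h^k$ does not reproduce physical polynomials on non-affine elements. I would first try the following. Let $p\in P_k$ be the averaged Taylor polynomial of $v$ on $K^h$, so that $|v-p|_{m,K^h}\le Ch_K^{k+1-m}|v|_{k+1,K^h}$ for $0\le m\le k+1$. Split
\begin{equation*}
v-\Pi_h^kv=(v-p)-\Pi_h^k(v-p)+(p-\Pi_h^kp).
\end{equation*}
Step 2 applied to $v-p$ gives the desired bound for the first two terms together. For the polynomial term, $D^{k+1}(p\circ\Psi_K)$ contains only $D^mp$ with $m\le k$, multiplied by products of $D^{s_i}\Psi_K$ in which at least one factor has $s_i\ge2$. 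I would use (iii) for $s\ge2$ together with the near-bilinearity of $\Psi_K$ from Step 1 (higher derivatives of $\Lambda_K$ are controlled by the regularity in (i)) to extract an additional $h_K$-smallness. Then I would compare $|p|_{m,K^h}$ with $|v|_{k+1,K^h}$ via the Taylor construction.

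This is the delicate step. If the scaling in (iii) alone does not produce the extra factor, I would isolate precisely the sharper derivative bound on $\Psi_K$ needed for it, and check that it follows from (i)--(iii) as the paper states them.
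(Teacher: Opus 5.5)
The paper gives no proof of this lemma; it quotes it from Ciarlet--Raviart \cite{cr72}, and your Steps 1--2 are essentially their argument. The genuine gap is Step 3, and it cannot be repaired. With only $|v|_{k+1,K^h}$ on the right, the estimate is false for genuinely curved elements.

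Take $k=2$, fix $c\neq 0$, and let $\Psi_K(\xi,\eta)=(h\xi,\,h\eta+ch^2\xi^2)$. Then $J_K\equiv h^2$, $\|D\Psi_K\|_{\infty,\widehat K}\le Ch$, $\|D^2\Psi_K\|_{\infty,\widehat K}\le Ch^2$ and $D^3\Psi_K=0$. The vertices $(\pm h,\pm h+ch^2)$ span a square, and the non-vertex nodes move by at most $|c|h^2$. So (i)--(iii) hold, and two edges are parabolic arcs. For $v=y^2$ one has $|v|_{3,K^h}=0$. But $v\circ\Psi_K=h^2\eta^2+2ch^3\xi^2\eta+c^2h^4\xi^4\notin\mathbb Q_2(\widehat K)$, so $\Pi_h^2v\neq v$ and the left-hand side is positive.

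This is exactly the non-reproduction you pointed out. For $p\in P_k$, Step 2 controls $p-\Pi_h^kp$ only through $|p|_{m,K^h}$ with $m\le k$. Hypotheses (i)--(iii) supply no further smallness that would make this term vanish. Nor can $|p|_{m,K^h}$ be bounded by $|v|_{k+1,K^h}$: take $v=p$. The seminorm form holds only when $P_k\circ\Psi_K\subset\mathbb Q_k(\widehat K)$, e.g.\ for bilinear $\Psi_K$.

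What (i)--(iii) do give, and what \cite{cr72} actually proves, is the estimate with the full norm. Your Step 2 already yields it once the powers of $h_K$ are corrected. Combine your bound $|\widehat v|_{k+1,\widehat K}\le Ch_K^{k}\sum_{m=1}^{k+1}|v|_{m,K^h}$ with your transfer bounds for $w=v-\Pi_h^kv$ and Bramble--Hilbert on $\widehat K$. This gives $|v-\Pi_h^kv|_{s,K^h}\le Ch_K^{k+1-s}\sum_{m=1}^{k+1}|v|_{m,K^h}\le Ch_K^{k+1-s}\|v\|_{k+1,K^h}$.

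There are no weights $h_K^{m-k-1}$, unlike in your last display. With those weights the lower-order terms would be $h_K^{m-s}|v|_{m,K^h}$, which need not even be small. The role of (iii) for $s\ge2$ is precisely that every Fa\`a di Bruno term carries the full factor $h_K^{k+1}$.

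The full-norm version is also what the paper uses afterwards: summed over elements it gives $Ch^{k+1-s}\|v\|_{k+1,\Omega_h}$, as in \eqref{interp_flux_est2}. So drop Step 3 and state the conclusion with $\|v\|_{k+1,K^h}$ in place of $|v|_{k+1,K^h}$.

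A minor point in Step 1: for bilinear $\Lambda_K$, $\Lambda_K(\widehat x)-\Lambda_K(\widehat y)\neq D\Lambda_K(\widehat x-\widehat y)$ in general. Use instead that $\Lambda_K$ is bi-Lipschitz, with constants $\sim h_K$, onto the shape-regular convex $K'$. Then $|\Psi_K(\widehat x)-\Psi_K(\widehat y)|\ge(ch_K-\|D\Phi_K\|_{\infty,\widehat K})|\widehat x-\widehat y|$.
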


By Lemma~\ref{lemma11}, the interpolation estimate
\eqref{eq:interp_error} can be applied provided that the element mapping
$\Psi_K$ satisfies the regularity condition
\eqref{eq:mapp_deriv_bound}. In addition, we need to provide estimates for several geometric quantities associated with
$\Psi_K$. Let $\mathbb J_K=D\Psi_K$ be the Jacobian matrix of the mapping $\Psi_K$,
whose determinant is $J_K$. Let $r_\xi=\sqrt{x_\xi^2+y_\xi^2}$, $r_\eta=\sqrt{x_\eta^2+y_\eta^2}$, and $s=x_\xi x_\eta+y_\xi y_\eta$.

\begin{lemma}
\label{lem:geometric_coefficient_bounds}
Assume that the element mapping $\Psi_K$ satisfies
\begin{equation}
\label{eq:mapping_scaled_bounds}
\|D^m\Psi_K\|_{\infty,\widehat K}
\le Ch^m,
\qquad m=1,2,3,
\end{equation}
and that the Jacobian determinant satisfies the uniform
non-degeneracy condition
\begin{equation}
\label{eq:jacobian_lower_bound}
ch^2
\le
|J_K(\widehat{\boldsymbol x})|
\le
Ch^2,
\qquad
\widehat{\boldsymbol x}\in\widehat K,
\end{equation}
where $c,C>0$ are independent of $h$.
Then
\begin{align}
&\|J_K-\overline J_K\|_{\infty,\widehat K}
\le Ch^3,
\label{eq:JK_bar_est}
\\
&\bigl\|D(r_\eta^2J_K^{-1})\bigr\|_{\infty,\widehat K}
+
\bigl\|D(sJ_K^{-1})\bigr\|_{\infty,\widehat K}
+
\bigl\|D(r_\xi^2J_K^{-1})\bigr\|_{\infty,\widehat K}
\le Ch,
\label{eq:metric_coeff_est}
\\
&\|\mathbb J_K^{-1}
-\overline{\mathbb J_K^{-1}}\|_{\infty,\widehat K}
\le C,
\label{eq:Jinv_bar_est}
\\
&\|D\mathbb J_K^{-1}\|_{\infty,\widehat K}
\le C,
\label{eq:DJinv_est}
\\
&\bigl\|
D\bigl(
\mathbb J_K^{-1}
-\overline{\mathbb J_K^{-1}}
\bigr)
\bigr\|_{\infty,\widehat K}
\le C,
\label{eq:DJinv_bar_est}
\end{align}
where $\overline {J_K}$ and $\overline{\mathbb J^{-1}_K}$ denote the mean values of $J_K$ and $\mathbb J^{-1}_K$ over $\widehat K$.
\end{lemma}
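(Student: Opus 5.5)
The plan is to prove every bound by direct differentiation on the reference element $\widehat K$. Each quantity will be written as a polynomial in the entries of $D\Psi_K$ (and, where needed, $D^2\Psi_K$), possibly multiplied by a negative power of $J_K$. Each factor is then bounded by \eqref{eq:mapping_scaled_bounds}, and \eqref{eq:jacobian_lower_bound} gives $|J_K^{-1}|\le Ch^{-2}$. Powers of $h$ are counted term by term. The only tools beyond this are the Leibniz and chain rules, together with the elementary mean-value inequality on the convex set $\widehat K$ (which has diameter $2\sqrt2$): for any $C^1$ function $g$,
\[
\|g-\overline g\|_{\infty,\widehat K}\le 2\sqrt2\,\|Dg\|_{\infty,\widehat K}.
\]
To see this, note that $\overline g$ is attained at some point $\widehat{\boldsymbol x}_0\in\widehat K$ by continuity, and then integrate $Dg$ along the segment from $\widehat{\boldsymbol x}_0$ to $\widehat{\boldsymbol x}$.

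For \eqref{eq:JK_bar_est}, write $J_K=x_\xi y_\eta-x_\eta y_\xi$. Then $DJ_K$ is a sum of products of one first derivative and one second derivative of $\Psi_K$, so $\|DJ_K\|_\infty\le C h\cdot h^2=Ch^3$, and the mean-value inequality gives \eqref{eq:JK_bar_est}.

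For \eqref{eq:metric_coeff_est}, the quantities $r_\eta^2$, $s$ and $r_\xi^2$ are quadratic in first derivatives. So each is $O(h^2)$, and each of its derivatives is $O(h\cdot h^2)=O(h^3)$. Moreover, $D(J_K^{-1})=-J_K^{-2}DJ_K=O(h^{-4}h^3)=O(h^{-1})$. The Leibniz rule then gives
\[
D(r_\eta^2J_K^{-1})=O(h^3h^{-2})+O(h^2h^{-1})=O(h),
\]
and the same bound holds for the other two coefficients.

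For \eqref{eq:DJinv_est}, use the explicit inverse
\[
\mathbb J_K^{-1}=J_K^{-1}\begin{pmatrix} y_\eta & -x_\eta\\ -y_\xi & x_\xi\end{pmatrix}.
\]
Its entries are $O(h\cdot h^{-2})=O(h^{-1})$. Differentiating,
\[
D\mathbb J_K^{-1}=J_K^{-1}\,O(D^2\Psi_K)+D(J_K^{-1})\,O(D\Psi_K)=O(h^{-2}h^2)+O(h^{-1}h)=O(1).
\]
This is \eqref{eq:DJinv_est}. Since $\overline{\mathbb J_K^{-1}}$ is constant, $D(\mathbb J_K^{-1}-\overline{\mathbb J_K^{-1}})=D\mathbb J_K^{-1}$, so \eqref{eq:DJinv_bar_est} is identical to \eqref{eq:DJinv_est}. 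Finally, \eqref{eq:Jinv_bar_est} follows from \eqref{eq:DJinv_est} and the mean-value inequality applied entrywise.

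None of these steps presents a real obstacle. The only point that needs care is bookkeeping: each term must carry exactly one factor of $D(J_K^{-1})$ or $J_K^{-1}$, and the lower bound in \eqref{eq:jacobian_lower_bound} must be used rather than the upper bound. It is also worth noting that only $m=1,2$ of \eqref{eq:mapping_scaled_bounds} is needed here; the third-derivative bound is presumably reserved for later estimates.
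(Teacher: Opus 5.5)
Your proposal is correct and takes essentially the same route as the paper. Both arguments count powers of $h$ via Leibniz's rule, use $D(J_K^{-1})=-J_K^{-2}DJ_K$ together with the lower bound $|J_K|\ge ch^2$, and apply the $W^{1,\infty}$-Poincar\'e (mean-value) inequality on $\widehat K$. The only cosmetic differences are that the paper bounds $D\mathbb J_K^{-1}$ via the identity $D(\mathbb J_K^{-1})=-\mathbb J_K^{-1}(D\mathbb J_K)\mathbb J_K^{-1}$ instead of differentiating the adjugate formula, and that it cites the Poincar\'e inequality rather than proving it.
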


\begin{proof}
By \eqref{eq:mapping_scaled_bounds}, we have
\begin{equation}
\label{eq:basic_geo_bounds}
\|\mathbb J_K\|_{\infty,\widehat K}
\le Ch,
\qquad
\|D\mathbb J_K\|_{\infty,\widehat K}
\le Ch^2,
\qquad
\|D^2\mathbb J_K\|_{\infty,\widehat K}
\le Ch^3.
\end{equation}
Since $J_K=\det\mathbb J_K$, Leibniz's formula gives
\begin{equation}
\label{eq:det_bounds}
\|J_K\|_{\infty,\widehat K}
\le Ch^2,\quad\|J_K^{-1}\|_{\infty,\widehat K}
\le Ch^{-2}, 
\quad
\|DJ_K\|_{\infty,\widehat K}
\le Ch^3,
\end{equation}
which together with $D(J_K^{-1})=-J_K^{-2}(DJ_K)$ yields
\begin{equation}
\label{eq:det_inverse_bounds}
\|D(J_K^{-1})\|_{\infty,\widehat K}
\le Ch^{-1}.
\end{equation}
By $W^{1,\infty}$-Poincar\'e inequality, we have
\[
\|J_K-\overline J_K\|_{\infty,\widehat K}
\le
C\|DJ_K\|_{\infty,\widehat K}
\le Ch^3,
\]
which proves \eqref{eq:JK_bar_est}. Obviously, one has
\[
\|r_\xi^2\|_{\infty,\widehat K}
+
\|r_\eta^2\|_{\infty,\widehat K}
+
\|s\|_{\infty,\widehat K}
\le Ch^2.
\]
A direct calculation yields
\[
\|D(r_\xi^2)\|_{\infty,\widehat K}
+
\|D(r_\eta^2)\|_{\infty,\widehat K}
+
\|Ds\|_{\infty,\widehat K}
\le Ch^3,
\]
which together with Leibniz's formula and \eqref{eq:det_inverse_bounds} gives
\begin{align*}
&
\|D(r_\eta^2J_K^{-1})\|_{\infty,\widehat K}
+
\|D(sJ_K^{-1})\|_{\infty,\widehat K}
+
\|D(r_\xi^2J_K^{-1})\|_{\infty,\widehat K}
\\
&\qquad
\le
C\left(
h^3h^{-2}+h^2h^{-1}
\right)
\le Ch,
\end{align*}
which proves \eqref{eq:metric_coeff_est}. Since
\[
D(\mathbb J_K^{-1})
=
-\mathbb J_K^{-1}
(D\mathbb J_K)
\mathbb J_K^{-1},
\]
thus we obtain
\[
\|D\mathbb J_K^{-1}\|_{\infty,\widehat K}
\le
C h^{-1}h^2h^{-1}
\le C,
\]
which proves \eqref{eq:DJinv_est}. Applying the $W^{1,\infty}$-Poincar\'e inequality
 to $\mathbb J_K^{-1}$ gives
\[
\|\mathbb J_K^{-1}
-\overline{\mathbb J_K^{-1}}\|_{\infty,\widehat K}
\le
C\|D\mathbb J_K^{-1}\|_{\infty,\widehat K}
\le C,
\]
which proves \eqref{eq:Jinv_bar_est}.
Since $\overline{\mathbb J_K^{-1}}$ is a constant matrix, so we have
\[
\bigl\|
D\bigl(
\mathbb J_K^{-1}
-\overline{\mathbb J_K^{-1}}
\bigr)
\bigr\|_{\infty,\widehat K}=\bigl\|
D\mathbb J_K^{-1}
\bigr\|_{\infty,\widehat K}
\le C.
\]
\end{proof}

\section{Finite volume element scheme}

We consider the following elliptic boundary value problem
\begin{subequations}\label{model}
\begin{align}
-\nabla\cdot\bigl(\kappa(x,y)\nabla u\bigr)
&=f,
\qquad {\rm in}\ \Omega,
\label{model_1}\\
\kappa(x,y)\frac{\partial u}{\partial\bm n}
+\sigma(x,y)u
&=g,
\qquad {\rm on}\ \Gamma=\partial\Omega.
\label{model_2}
\end{align}
\end{subequations}
Here $\bm n$ denotes the outward unit normal vector to $\Gamma$,
$f\in L^2(\Omega)$ is the source term, and the diffusion coefficient
$\kappa$ satisfies
\[
\kappa\in W^{1,\infty}(\Omega),
\qquad
\kappa(x,y)\ge\kappa_0>0
\quad\text{in }\Omega.
\]
The Robin coefficient satisfies
\[\sigma\in W^{1,\infty}(\Omega),\qquad
0<\sigma_0
\le
\sigma(x,y)
\le
\sigma_1
\qquad\text{on }\Gamma.
\]

Define trial function space as
\begin{equation*}
\mathcal{U}^k_h=\{u_h\in C(\bar{\Omega}_h)|\ \widehat{u}_h=u_h|_{K^h} \circ \Psi_{K} \in \mathbb{Q}_k(\widehat{K}),\ \forall K^h\in \mathcal T_h \},
\end{equation*}
where $\mathbb{Q}_k(\widehat{K})$ denotes the space of all bi-$k$ polynomials on $\widehat{K}$.

Next, we introduce the dual mesh and the corresponding test function space.
Let $\mathbb Z_k=\{1,2,\ldots,k\}$ and $\mathbb Z_k^0=\{0,1,\ldots,k\}$. We specify the Gaussian points and the interpolation points on the reference element $\widehat K$, and then transfer them to curved-edge element by the mapping $\Psi_K$.

Denote by \(g_i\), \(i\in\mathbb Z_k\), the \(k\)-point Gauss--Legendre points on \([-1,1]\). That is,
\[
-1<g_1<g_2<\cdots<g_k<1,
\]
and \(\{g_i\}_{i\in\mathbb Z_k}\) are the roots of the Legendre polynomial
\(\mathbb P_k\). The associated Gauss-Legendre quadrature rule is exact for all
polynomials of degree at most \(2k-1\). For convenience, we extend the Gauss--Legendre points by adding two
endpoints $g_0=-1$ and $g_{k+1}=1$. Then the extended tensor-product point set on \(\widehat K=[-1,1]^2\) is
defined by
\[
g^{\widehat K}_{i,j}=(g_i,g_j), \quad \forall\, i,j\in\mathbb Z^0_{k+1}.
\]
Here the points with \(i,j\in\mathbb Z_k\) are the standard Gauss--Legendre
points, while the remaining points are auxiliary boundary points used to close
the dual subregions.

For each element $K^h\in \mathcal T_h$, the images of the above Gaussian points under the mapping $\Psi_K$ are defined as
\[
G^{K^h}_{i,j}=\Psi_K\big(g^{\widehat K}_{i,j}\big), \quad \forall\, i,j\in\mathbb Z^0_{k+1}.
\]
The set of all Gaussian points on the element $K^h$ is denoted by
\[
\mathcal G_{K^h}=\big\{G^{K^h}_{i,j} \,\big|\, i,j\in\mathbb Z^0_{k+1}\big\},
\]
and the global set of Gaussian points over the computational domain $\Omega_h$ is given by
\[
\mathcal G=\bigcup_{K^h\in\mathcal T_h}\mathcal G_{K^h}.
\]
Similarly, let $l^{\widehat K}_{i,j}$ ($i,j\in\mathbb Z_k^0$) denote the Lagrangian interpolation points on $\widehat K$, which will be used to define the trial functions. Their images on the physical element $K^h$ are denoted by
\[
L^{K^h}_{i,j}=\Psi_K\big(l^{\widehat K}_{i,j}\big), \quad \forall\, i,j\in\mathbb Z_k^0.
\]
Accordingly, the set of Lagrangian interpolation points on the element $K^h$ is
\[
\mathcal N_{K^h}=\big\{L^{K^h}_{i,j} \,\big|\, i,j\in\mathbb Z_k^0\big\},
\]
and the global set of Lagrangian interpolation points over $\mathcal T_h$ is defined by
\[
\mathcal N=\bigcup_{K^h\in\mathcal T_h}\mathcal N_{K^h}.
\]
Let $\mathcal N^0=\mathcal N\setminus\partial\Omega_h$ be the set of all internal interpolation points and  $N^b=\mathcal N\backslash\mathcal N^0$ denotes the set of all boundary interpolation points. Figure ~\ref{fig:1}(a) shows the Gaussian points and Lagrangian interpolation points on $\widehat K$ when $k=2$. 

We next construct the dual partition associated with the above interpolation points. To this end, we once again begin by defining the dual element from the reference element $\widehat K$. We use the straight lines $\xi = g_i$ and $\eta = g_j$ (where $i, j \in \mathbb{Z}_{k+1}^0$) to partition the reference element $\widehat{K}$ into $(k+1)^2$ subregions with no common interior. As shown in Figure \ref{fig:1}(b), for each interpolation point $l^{\widehat K}_{i,j}$ ($i, j \in \mathbb{Z}_k^0$), there is a unique subregion
\[
\widehat K^*_{i,j}=[g_i,g_{i+1}]\times[g_j,g_{j+1}], \quad \forall\, i,j\in\mathbb Z_k^0,
\]
which is associated with $l^{\widehat K}_{i,j}$. Then we map these reference subregions onto the curved-edge elements $K^h$ as follows
\[
K_{i,j}^{*,h}=\Psi_K\big(\widehat K^*_{i,j}\big), \quad \forall\, i,j\in\mathbb Z_k^0.
\]
Since $L^{K^h}_{i,j}=\Psi_K\big(l^{\widehat K}_{i,j}\big)$, the local dual subregions $K_{i,j}^{*,h}$ corresponds naturally to the physical interpolation point $L^{K^h}_{i,j}$.  Then one can combine the local dual subregions with respect to $P \in \mathcal{N}$ into a complete control element, that is,
\[
 K_P^{*,h}=\bigcup\big\{K_{i,j}^{*,h} \,\big|\, K^h\in\mathcal T_h,\ L^{K^h}_{i,j}=P\big\}.
\]
The dual partition of $\mathcal T_h$ is therefore given by
\[
\mathcal T_h^*=\big\{ K_P^{*,h} \,\big|\, P\in\mathcal N\big\}.
\]
Based on the dual partition $\mathcal T_h^*$, we define the piecewise-constant test function space by
\[
\mathcal V_{h}=\left\{ v_h |\ v_h=\sum_{P\in\mathcal N} v_{K_P^{*,h}}\psi_{K_P^{*,h}} \right\},
\]
where $v_{K_P^{*,h}}$ and $\psi_{K_P^{*,h}}$ respectively denote the constant and characteristic functions on the control set $K_P^{*,h}$. 

\begin{figure}[htbp]
    \centering
    \begin{subfigure}[b]{0.50\textwidth}
        \centering
        \includegraphics[height=4.8cm, width=4.8cm]{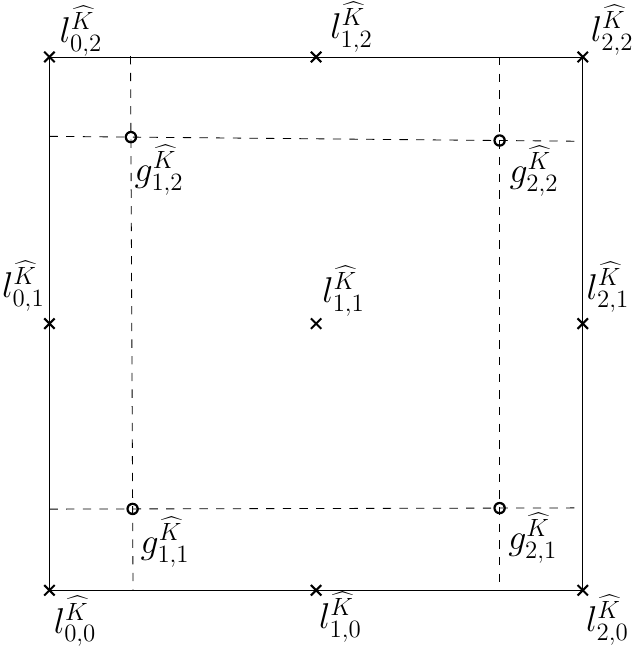}
        \caption{Gaussian and Interpolation points }
    \end{subfigure}
    \hfill
    \begin{subfigure}[b]{0.45\textwidth}
        \centering
        \includegraphics[height=4.8cm, width=4.8cm]{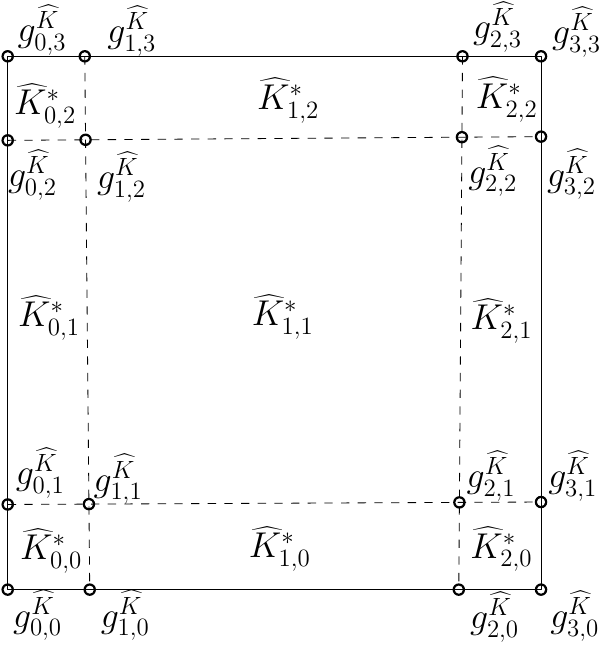}
        \caption{Distribution of dual subregions}
    \end{subfigure}
    \vspace{0.5em} 
    \caption{Gaussian/Interpolation points and distribution of dual subregions on $\widehat{K}$}
    \label{fig:1}
\end{figure}

Under the mapping \(\Psi_K\) constructed as described above, the computational
domain \(\Omega_h\) and the physical domain
\(\Omega\) may not coincide, which can result in the cases where \(\Omega\not\subset \Omega_h\)
or \(\Omega_h\not\subset \Omega\). Therefore, the source term \(f\), originally defined on \(\Omega\),
must be evaluated on the computational domain \(\Omega_h\). 
To avoid this mismatch, we choose a bounded domain \(\widetilde\Omega\) that is independent of $h$, such that $\Omega\cup\Omega_h\subset\widetilde{\Omega}$.
 Then we define $\widetilde f$ as the zero extension of $f$ from
$\Omega$ to $\widetilde\Omega$. Moreover, let $\widetilde \kappa$, $\widetilde\sigma$ and $\widetilde g$ be bounded extensions
of $\kappa$, $\sigma$ and $g$, respectively. We simply refer to them as $f_h=\widetilde f|_{\Omega_h}$, 
$\sigma_h=\widetilde\sigma|_{\Gamma_h}$, $g_h=\widetilde g|_{\Gamma_h}$ and $\kappa_h=\widetilde \kappa|_{\Gamma_h}$, where $\Gamma_h=\partial\Omega_h$.

Based on the above setting, the finite volume element method is to
find $u_h\in\mathcal U_h^k$ such that
\begin{equation}\label{scheme}
a_h(u_h,v_h)
=
(f_h,v_h)
+
\langle g_h,v_h\rangle,
\qquad
\forall v_h\in\mathcal V_h,
\end{equation}
where
\begin{equation}\label{robin-ah-decomp}
a_h(u_h,v_h)
=
a_{h,0}(u_h,v_h)
+
r_h(u_h,v_h),
\end{equation}
with
\begin{align*}
a_{h,0}(u_h,v_h)
&=
-
\sum_{P\in\mathcal N}
v_{K_P^{*,h}}
\int_{\partial_0K_P^{*,h}}
\kappa_h(x,y)\frac{\partial u_h}{\partial\bm n}
\,{\rm d}s,\\
r_h(u_h,v_h)
&=
\sum_{P\in\mathcal N^b}
v_{K_P^{*,h}}
\int_{\Gamma_{P,h}}
\sigma_hu_h\,{\rm d}s,\\
\langle g_h,v_h\rangle
&=
\sum_{P\in\mathcal N^b}
v_{K_P^{*,h}}
\int_{\Gamma_{P,h}}
g_h\,{\rm d}s\\
(f,v_h)&=\sum_{P\in\mathcal N}v_{K_P^{*,h}}\int_{K_P^{*,h}}f_h {\rm d}x{\rm d}y.
\end{align*}
Here $\Gamma_{P,h}=\partial K_P^{*,h}\cap\Gamma_h$ and $\partial_0K_P^{*,h}=\partial K_P^{*,h}\setminus\Gamma_{P,h}$.

\section{Stability Analysis}

In this section, we prove the coercivity of the Robin finite volume
bilinear form $a_h(\cdot,\cdot)$.

Let $\mathcal E_{\mathcal T_h^*}$ be the set of all internal dual
edges. For any $v_h\in\mathcal V_h$, define
\[
|v_h|_{\mathcal T_h^*}^2
=
\sum_{e\in\mathcal E_{\mathcal T_h^*}}
h_e^{-1}
\int_e[v_h]_e^2\,{\rm d}s,
\]
where $h_e$ denotes the length of $e$ and
$
[v_h]_e
=
v_h|_{K_2^{*,h}}
-
v_h|_{K_1^{*,h}}
$
denotes the jump across the dual edge
$e=K_1^{*,h}\cap K_2^{*,h}$. A direct calculation yields
\begin{equation}\label{bil}
a_{h,0}(u_h,v_h)
=
\sum_{e\in\mathcal E_{\mathcal T_h^*}}
[v_h]_e
\int_e
\kappa(x,y)
\frac{\partial u_h}{\partial\bm n_e}
\,{\rm d}s,
\end{equation}
where $\bm n_e$ denotes the prescribed unit normal vector on $e$. For any $u_h\in\mathcal U^k_h$ and $v_h\in\mathcal V_h$, we introduce the following energy norms
\begin{equation}\label{robin-energy-norm}
|||u_h|||^2_{\Omega_h}
:=
|u_h|_{1,\Omega_h}^2
+
\|\sigma_h^{1/2}u_h\|_{0,\Gamma_h}^2,
\end{equation}
and 
\begin{equation}\label{robin-test-norm}
\|v_h\|_{\mathcal T_h^*}^2
:=
|v_h|_{\mathcal T_h^*}^2
+
\|\sigma_h^{1/2}v_h\|_{0,\Gamma_h}^2,
\end{equation}
where
\[
\|v_h\|_{0,\Gamma_h}^2
=
\sum_{P\in\mathcal N^b}
\int_{\Gamma_{P,h}}
|v_h|^2\,{\rm d}s.
\]
By using trace inequality and Poincar\'e inequality, we obtain that the norm $|||\cdot|||_{\Omega_h}$ is equivalent to the standard $H^1$ norm.

\subsection{Coercivity}

Let
$\overline{g^{\widehat K}_{i,j}g^{\widehat K}_{i+1,j}}$
be the line segment connecting
$g^{\widehat K}_{i,j}$ and
$g^{\widehat K}_{i+1,j}$. The corresponding dual edges on $K^h$ are
\[
E_{i,j}^{K^h,x}
=
\Psi_K
\left(
\overline{
g^{\widehat K}_{i,j}
g^{\widehat K}_{i+1,j}
}
\right),
\qquad
(i,j)\in
\mathbb Z_k^0\times\mathbb Z_k,
\]
and
\[
E_{i,j}^{K^h,y}
=
\Psi_K
\left(
\overline{
g^{\widehat K}_{i,j}
g^{\widehat K}_{i,j+1}
}
\right),
\qquad
(i,j)\in
\mathbb Z_k\times\mathbb Z_k^0.
\]
Thus, the set of all dual edges within $K^h$ is
\[
\mathcal E_{\mathcal T_h^*}\cap K^h=\{E_{i,j}^{K^h,x}|(i,j)\in\mathbb Z^0_k\times\mathbb Z_k\}\cup\{E_{i,j}^{K^h,y}|(i,j)\in\mathbb Z_k\times\mathbb Z_k^0\}.
\]
For $v_h\in\mathcal V_h$, let
$v_{i,j}^{K^h}=v_h(L_{i,j}^{K^h})$, and define the jump of $v_h$ across dual edges $E_{i,j}^{K^h,x}$ and $E_{i,j}^{K^h,y}$ as
\[
[v_h]_{i,j}^{K^h,x}
=
v_{i,j}^{K^h}
-
v_{i,j-1}^{K^h},
\qquad
(i,j)\in
\mathbb Z_k^0\times\mathbb Z_k,
\]
\[
[v_h]_{i,j}^{K^h,y}
=
v_{i,j}^{K^h}
-
v_{i-1,j}^{K^h},
\qquad
(i,j)\in
\mathbb Z_k\times\mathbb Z_k^0,
\]
and the jump of $v_h$ at Gaussian point $G^{K^h}_{i,j}$ as
\[
\lfloor v_h\rfloor_{i,j}^{K^h}
=
v_{i,j}^{K^h}
+
v_{i-1,j-1}^{K^h}
-
v_{i,j-1}^{K^h}
-
v_{i-1,j}^{K^h},
\qquad
(i,j)\in
\mathbb Z_k\times\mathbb Z_k.
\]
Obviously, we have
\begin{equation}\label{id1}
\lfloor v_h\rfloor_{i,j}^{K^h}
=
[v_h]_{i,j}^{K^h,x}
-
[v_h]_{i-1,j}^{K^h,x}
=
[v_h]_{i,j}^{K^h,y}
-
[v_h]_{i,j-1}^{K^h,y}.
\end{equation}
For each curved dual edge, we prescribe a unit normal vector field \(\bm{\nu}\). If the dual edge is logically horizontal, the direction of
\(\bm{\nu}\) is taken to be logically upward. If the dual edge is logically
vertical, the direction of \(\bm{\nu}\) is taken to be logically to the
right, see Figure \ref{fig:aaa}. Using these notations, the bilinear form \eqref{bil} can be decomposed into
\begin{equation}\label{ee1}
a_{h,0}(u_h,v_h)
=
\sum_{K^h\in\mathcal T_h}
a_{h,0,K^h}(u_h,v_h),
\end{equation}
where
\begin{align}
a_{h,0,K^h}(u_h,v_h)
={}&
\sum_{(i,j)\in\mathbb Z_k^0\times\mathbb Z_k}
[v_h]_{i,j}^{K^h,x}
\int_{E_{i,j}^{K^h,x}}
\kappa_h(x,y)
\frac{\partial u_h}{\partial\bm\nu}
\,{\rm d}s
\nonumber\\
&+
\sum_{(i,j)\in\mathbb Z_k\times\mathbb Z_k^0}
[v_h]_{i,j}^{K^h,y}
\int_{E_{i,j}^{K^h,y}}
\kappa_h(x,y)
\frac{\partial u_h}{\partial\bm\nu}
\,{\rm d}s.
\label{local-diffusion-form}
\end{align}
\begin{figure}[htbp]
    \centering
        \centering
        \includegraphics[height=4cm]{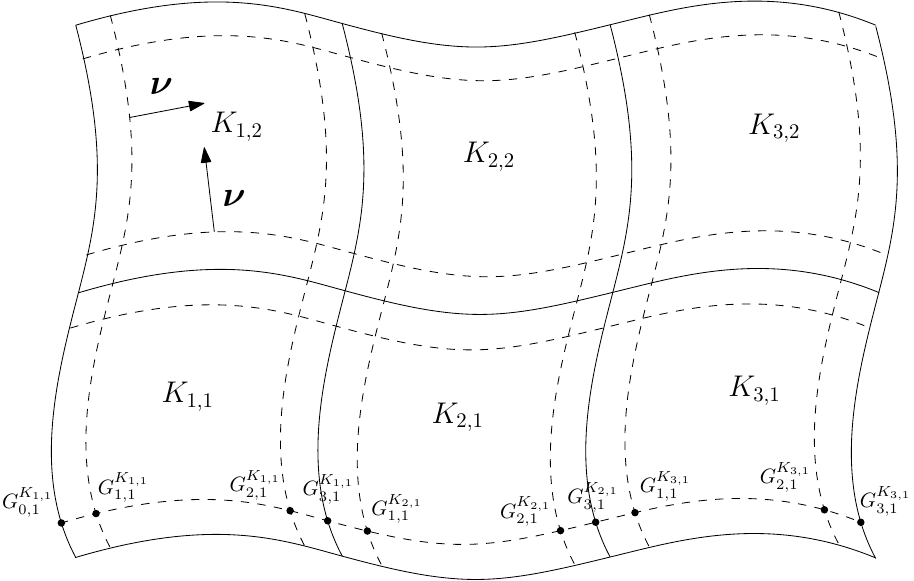}
    \caption{Specifying the direction of the unit normal vector corresponding to the dual edge and a global dual edge along the $x$-direction when $k = 2$}
    \label{fig:aaa}
\end{figure}
Let \(\Pi^k_h:H^2(\widetilde{\Omega})\to \mathcal U^k_h\) be the
\(k\)-th order Lagrange interpolation operator. We also introduce a dual interpolation operator 
$
\Pi_{h}^{k,*}:
\mathcal U_h^k
\longrightarrow
\mathcal V_h.
$ For each $u_h\in\mathcal U_h^k$, let $\widehat u_h=u_h|_{K^h}\circ\Psi_K$. For any $v_h=\Pi_{h}^{k,*}u_h$, we have $v_h|_{K^h}=\sum_{(i,j)\in\mathbb Z_k\times\mathbb Z_k}v^{K^h}_{i,j}\psi^{K^h}_{i,j}$, where the coefficients $v_{i,j}^{K^h}$ satisfy
\begin{equation}\label{pro}
\left\{
\begin{aligned}
&v_{0,0}^{K^h}=\widehat u_h(-1,-1),\\
&v_{i,0}^{K^h}-v_{i-1,0}^{K^h}=A_i\frac{\partial\widehat u_h}{\partial\xi}(g_i,-1),\qquad &&i\in\mathbb Z_k,\\
&v_{0,j}^{K^h}-v_{0,j-1}^{K^h}=A_j\frac{\partial\widehat u_h}{\partial\eta}(-1,g_j),\qquad &&j\in\mathbb Z_k,\\
&\lfloor v_h\rfloor_{i,j}^{K^h}
=
A_iA_j
\frac{\partial^2\widehat u_h}
{\partial\xi\partial\eta}
(g_i,g_j),
\qquad
&&(i,j)\in
\mathbb Z_k\times\mathbb Z_k,
\end{aligned}
\right.
\end{equation}
where \(A_i,i\in\mathbb Z_k\) denote the weights of the Gauss-Legendre quadrature $\sum_{i=1}^{k}A_ip(g_i)$ for computing the integral $\int_{-1}^{1}p(x){\rm d}x$.
\begin{lemma}\label{lemma:robin-pi-existence}
For every $u_h\in\mathcal U_h^k$, there exists a unique
$v_h\in\mathcal V_h$ satisfying \eqref{pro}. Hence the 
interpolation operator
$\Pi_{h}^{k,*}:\mathcal U_h^k\rightarrow\mathcal V_h$
is well defined.
\end{lemma}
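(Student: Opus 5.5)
The plan is to split the argument into a local part, namely unique solvability of \eqref{pro} on each $K^h$, and a global part, namely that the local values assigned to a node $P$ shared by several elements coincide, so that $v_h$ is a single well-defined element of $\mathcal V_h$.

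\emph{Local solvability.} On a fixed $K^h$ the unknowns are $v_{i,j}^{K^h}$ with $(i,j)\in\mathbb Z_k^0\times\mathbb Z_k^0$, which gives $(k+1)^2$ unknowns. System \eqref{pro} contains $1+k+k+k^2=(k+1)^2$ equations, and it is triangular. The first equation fixes $v_{0,0}^{K^h}$. The second and third equations determine the bottom row $v_{i,0}^{K^h}$ and the left column $v_{0,j}^{K^h}$ by recursion. By the definition of $\lfloor v_h\rfloor_{i,j}^{K^h}$, the last equation reads
\[
v_{i,j}^{K^h}=A_iA_j\frac{\partial^2\widehat u_h}{\partial\xi\partial\eta}(g_i,g_j)+v_{i,j-1}^{K^h}+v_{i-1,j}^{K^h}-v_{i-1,j-1}^{K^h}.
\]
This determines all remaining values by induction in lexicographic order. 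Summing the recursions gives the closed form
\[
v_{i,j}^{K^h}=\widehat u_h(-1,-1)+\sum_{m\le i}A_m\partial_\xi\widehat u_h(g_m,-1)+\sum_{n\le j}A_n\partial_\eta\widehat u_h(-1,g_n)+\sum_{m\le i,\,n\le j}A_mA_n\partial_{\xi\eta}\widehat u_h(g_m,g_n).
\]
Existence and uniqueness on $K^h$ are then immediate.

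\emph{Values on the element boundary.} Here I use exactness of the Gauss–Legendre rule. Since $\widehat u_h\in\mathbb Q_k(\widehat K)$, the functions $\partial_\xi\widehat u_h(\cdot,\eta)$, $\partial_\eta\widehat u_h(\xi,\cdot)$ and $\partial_{\xi\eta}\widehat u_h$ have degree at most $k-1\le 2k-1$ in the summed variable. Full sums over $m=1,\dots,k$ are therefore exact integrals. Substituting into the closed form yields the following.
\begin{enumerate}
\item[(a)] At the four vertices, $v^{K^h}=\widehat u_h$ evaluated at the vertex, i.e.\ $u_h(P)$.
\item[(b)] On each edge the value depends only on the trace of $\widehat u_h$ on that edge. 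For example, on $\eta=1$,
\[
v_{i,k}^{K^h}=\widehat u_h(-1,1)+\sum_{m\le i}A_m\partial_\xi\widehat u_h(g_m,1),
\]
obtained by writing $\sum_n A_n\partial_{\xi\eta}\widehat u_h(g_m,g_n)=\partial_\xi\widehat u_h(g_m,1)-\partial_\xi\widehat u_h(g_m,-1)$. The other three edges are analogous.
\end{enumerate}

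\emph{Global consistency.} This is the step I expect to be the main obstacle. For a vertex shared by several elements, (a) gives the common value $u_h(P)$ because $u_h$ is continuous. For a non-vertex node on an edge shared by $K_1^h$ and $K_2^h$, I use that the edge parametrizations coincide up to orientation and that $u_h\in C(\bar\Omega_h)$. Together these imply that the one-dimensional traces of $\widehat u_h$ from the two sides agree, possibly after the reflection $t\mapsto -t$.

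If the orientations agree, (b) gives identical values at once. If they are reversed, node index $i$ on one side corresponds to $k-i$ on the other, and the partial sum must be taken from the opposite endpoint. The Gauss points and weights are symmetric, $g_{k+1-m}=-g_m$ and $A_{k+1-m}=A_m$, and exactness gives
\[
\sum_{m\le i}+\sum_{m>i}=\int_{-1}^1\partial_t\widehat u_h=\widehat u_h(1)-\widehat u_h(-1).
\]
Combining these shows the two expressions coincide. The dual subregions attached to such a node in both elements then carry the same constant, so the $v_{K_P^{*,h}}$ are well defined and $\Pi_h^{k,*}$ is a well-defined map $\mathcal U_h^k\to\mathcal V_h$.
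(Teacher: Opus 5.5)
Your proposal is correct. The local part follows the paper's own argument. The paper also telescopes \eqref{pro} into the closed form \eqref{eq23} for existence. For uniqueness it shows that the difference of two solutions satisfies the homogeneous recursion and hence vanishes, which is the same content as your remark that the system is triangular in lexicographic order.

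Where you go beyond the paper is the global step. The paper's proof stops at unique solvability on each $K^h$. It never checks that the values assigned to a vertex or edge node $P$ by the different elements containing it agree. That agreement is exactly what is needed for $v_h$ to be a single constant on each control volume $K_P^{*,h}$, i.e.\ to lie in $\mathcal V_h$ at all.

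Your edge analysis fills this gap:
\begin{itemize}
\item On every edge the local values equal $(\mathcal Q_kq)_i$ in the notation of \eqref{Tk-def}, where $q$ is the trace of $\widehat u_h$. This uses $\deg\le k-1$ and Gauss exactness to collapse the full sums.
\item The reflection identity, which follows from $g_{k+1-m}=-g_m$, $A_{k+1-m}=A_m$ and full-sum exactness, then gives equality of the two sides for either orientation.
\end{itemize}
As a by-product, this justifies the identity $\Pi_h^{k,*}u_h(\Psi_K(t))=\mathcal Q_kq_e(t)$ on boundary edges, which the paper uses without comment in \eqref{rh-reference-edge}.

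One point you should state explicitly. Under reversed orientation, the index correspondence $i\leftrightarrow k-i$ needs the Lagrange nodes on the reference edge to be symmetric, $t_{k-i}=-t_i$. This holds for equispaced or Gauss--Lobatto nodes and is implicit in the conformity of $\mathcal U_h^k$, but the paper never states it. You need it alongside the edge-compatibility of the maps $\Psi_K$ and the continuity of $u_h$.
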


\begin{proof}
It follows from \eqref{pro} that we have
\begin{align}
v_{i,0}^{K^h}-v_{0,0}^{K^h}
=
\sum_{r=1}^{i}
\left(
v_{r,0}^{K^h}-v_{r-1,0}^{K^h}
\right)=
\sum_{r=1}^{i}
A_r
\frac{\partial\widehat u_h}{\partial\xi}
(g_r,-1),
\end{align}
which gives
\begin{equation}\label{eq:vi0}
v_{i,0}^{K^h}
=
\widehat u_h(-1,-1)
+
\sum_{r=1}^{i}
A_r
\frac{\partial\widehat u_h}{\partial\xi}
(g_r,-1).
\end{equation}
Similarly, one has
\begin{equation}\label{eq:v0j}
v_{0,j}^{K^h}
=
\widehat u_h(-1,-1)
+
\sum_{s=1}^{j}
A_s
\frac{\partial\widehat u_h}{\partial\eta}
(-1,g_s).
\end{equation}
In addition, formulations \eqref{pro} also yields
\begin{align}
\sum_{r=1}^{i}\sum_{s=1}^{j}
\left(
v_{r,s}^{K^h}
+
v_{r-1,s-1}^{K^h}
-
v_{r-1,s}^{K^h}
-
v_{r,s-1}^{K^h}
\right)=\sum_{r=1}^{i}\sum_{s=1}^{j}
A_rA_s
\frac{\partial^2\widehat u_h}
{\partial\xi\partial\eta}
(g_r,g_s).
\label{eq:double-sum}
\end{align}
The left-hand side of Equation \eqref{eq:double-sum} is actually
\begin{align}
\sum_{r=1}^{i}\sum_{s=1}^{j}
\left(
v_{r,s}^{K^h}
+
v_{r-1,s-1}^{K^h}
-
v_{r-1,s}^{K^h}
-
v_{r,s-1}^{K^h}
\right)
=
v_{i,j}^{K^h}
-
v_{i,0}^{K^h}
-
v_{0,j}^{K^h}
+
v_{0,0}^{K^h}.\label{exen}
\end{align}
Therefore,
\begin{equation}\label{eq:vij-middle}
v_{i,j}^{K^h}
=
v_{i,0}^{K^h}
+
v_{0,j}^{K^h}
-
v_{0,0}^{K^h}
+
\sum_{r=1}^{i}\sum_{s=1}^{j}
A_rA_s
\frac{\partial^2\widehat u_h}
{\partial\xi\partial\eta}
(g_r,g_s).
\end{equation}
Substituting \eqref{eq:vi0} and \eqref{eq:v0j} into
\eqref{eq:vij-middle}, we obtain the explicit formula
\begin{align}
v_{i,j}^{K^h}
={}&
\widehat u_h(-1,-1)
+
\sum_{r=1}^{i}
A_r
\frac{\partial\widehat u_h}{\partial\xi}
(g_r,-1)
\nonumber\\
&+
\sum_{s=1}^{j}
A_s
\frac{\partial\widehat u_h}{\partial\eta}
(-1,g_s)
\nonumber\\
&+
\sum_{r=1}^{i}\sum_{s=1}^{j}
A_rA_s
\frac{\partial^2\widehat u_h}
{\partial\xi\partial\eta}
(g_r,g_s).
\label{eq23}
\end{align}

We next prove uniqueness. Suppose that
$\{v_{i,j}^{K^h}\}$ and
$\{\widetilde v_{i,j}^{K^h}\}$ are two sets of coefficients
satisfying \eqref{pro} for the same $u_h$. Define
$
w_{i,j}^{K^h}
=
v_{i,j}^{K^h}
-
\widetilde v_{i,j}^{K^h}.
$
The equations \eqref{pro} imply
\[
w_{0,0}^{K^h}=0,\qquad
w_{i,0}^{K^h}-w_{i-1,0}^{K^h}=0,
\qquad
w_{0,j}^{K^h}-w_{0,j-1}^{K^h}=0,
\]
and
\[
w_{i,j}^{K^h}
+
w_{i-1,j-1}^{K^h}
-
w_{i-1,j}^{K^h}
-
w_{i,j-1}^{K^h}
=0.
\]
The first three relations imply
\[
w_{i,0}^{K^h}=0,
\qquad
w_{0,j}^{K^h}=0.
\]
The last relation can be written as
\[
w_{i,j}^{K^h}
=
w_{i-1,j}^{K^h}
+
w_{i,j-1}^{K^h}
-
w_{i-1,j-1}^{K^h},
\]
which gives
\[
w_{i,j}^{K^h}=0,
\qquad
0\le i,j\le k.
\]
Thus the local coefficients are unique.

\end{proof}


For $q\in P_k([-1,1])$, we denote the one-dimensional Gaussian transfer operator by $\mathcal Q_k:P_k([-1,1])
\longrightarrow
P_0(\{[g_i,g_{i+1}]\}_{i=0}^{k})
$ with
$
(\mathcal Q_kq)|_{[g_i,g_{i+1}]}
=
q_i^*.
$
Here the coefficients $q^*_i$ satisfy
\begin{equation}\label{Tk-def}
q_0^*=q(-1),
\qquad
q_i^*=q_{i-1}^*+
A_iq'(g_i),
\qquad
1\le i\le k.
\end{equation}
It follows from \eqref{Tk-def} that we have $q^*_k=q(1)$. For any $p,q\in P_k([-1,1])$, define
\begin{equation}\label{Bk-def}
B_k(p,q)=
\int_{-1}^{1}
p(t)\mathcal Q_kq(t)
\,{\rm d}t.
\end{equation}

\begin{lemma}\label{lemma:gaussian-transfer}
For any $p(t),q(t)\in P_k([-1,1])$ with expressions
\[
p(t)=\sum_{\ell=0}^{k}p_\ell t^\ell,
\qquad
q(t)=\sum_{\ell=0}^{k}q_\ell t^\ell,
\]
 there exists a positive constant $c_k$ depending on the degree $k$, such that
\begin{equation}\label{Bk-identity}
B_k(p,q)
=
\int_{-1}^{1}p(t)q(t)\,{\rm d}t
+
c_kp_kq_k.
\end{equation}
Moreover, we also have
\begin{align}
&B_k(q,q)
\ge
\|q\|_{0,(-1,1)}^2,\label{Bk-positive}\\
&\|\mathcal Q_kq\|_{0,(-1,1)}
\le
C\|q\|_{0,(-1,1)}\label{Tk-stability}.
\end{align}
\end{lemma}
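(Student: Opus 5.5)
We will derive \eqref{Bk-identity} from a one-dimensional summation-by-parts identity. The integral of $p$ against a piecewise constant can be written through the cumulative integral $P(t)=\int_{-1}^{t}p(s)\,{\rm d}s$. Since $\mathcal Q_kq$ equals $q_i^*$ on $[g_i,g_{i+1}]$ for $i=0,\dots,k$, with $g_0=-1$ and $g_{k+1}=1$, Abel summation gives
\[
B_k(p,q)=\sum_{i=0}^{k}q_i^*\bigl(P(g_{i+1})-P(g_i)\bigr)
=q_k^*P(1)-\sum_{i=1}^{k}(q_i^*-q_{i-1}^*)P(g_i)
=q(1)P(1)-\sum_{i=1}^{k}A_iq'(g_i)P(g_i).
\]
On the other hand, integration by parts gives $\int_{-1}^1 pq\,{\rm d}t=q(1)P(1)-\int_{-1}^1 q'(t)P(t)\,{\rm d}t$. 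Therefore
\[
B_k(p,q)-\int_{-1}^1 pq\,{\rm d}t=\int_{-1}^1 q'P\,{\rm d}t-\sum_{i=1}^kA_iq'(g_i)P(g_i),
\]
which is exactly the Gauss--Legendre quadrature error for the polynomial $q'P$. This polynomial has degree at most $(k-1)+(k+1)=2k$.

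Since the $k$-point Gauss rule is exact up to degree $2k-1$, only the top-degree monomial $t^{2k}$ of $q'P$ contributes to the error. That leading coefficient is $kq_k\cdot p_k/(k+1)$. Hence the difference equals $\frac{k}{k+1}p_kq_k\,E_k(t^{2k})$, where
\[
E_k(t^{2k})=\int_{-1}^1t^{2k}\,{\rm d}t-\sum_iA_ig_i^{2k}.
\]
To see that this is positive, write $t^{2k}=\widetilde{\mathbb P}_k(t)^2+r(t)$, where $\widetilde{\mathbb P}_k$ is the monic Legendre polynomial and $\deg r\le 2k-1$. The quadrature integrates $r$ exactly and gives zero for $\widetilde{\mathbb P}_k^2$, because $\widetilde{\mathbb P}_k$ vanishes at the nodes. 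Hence $E_k(t^{2k})=\|\widetilde{\mathbb P}_k\|_{0,(-1,1)}^2>0$, and we set $c_k=\frac{k}{k+1}\|\widetilde{\mathbb P}_k\|_{0,(-1,1)}^2$. This identification of the error sign is the only delicate step. The rest is bookkeeping, and one checks that the Abel summation indices match the definition \eqref{Tk-def}.

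For \eqref{Bk-positive}, take $p=q$ in \eqref{Bk-identity}. This gives $B_k(q,q)=\|q\|^2_{0,(-1,1)}+c_kq_k^2\ge\|q\|^2_{0,(-1,1)}$.

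For \eqref{Tk-stability}, $\mathcal Q_k$ is a linear map on the finite-dimensional space $P_k([-1,1])$, so it is bounded in any norm. Hence $\|\mathcal Q_kq\|_{0,(-1,1)}\le C\|q\|_{0,(-1,1)}$ with $C$ depending only on $k$. Explicitly, each $|q_i^*|$ is bounded by $|q(-1)|+\sum_iA_i|q'(g_i)|$, and inverse/norm-equivalence estimates on $P_k$ bound this by $C\|q\|_{0,(-1,1)}$.
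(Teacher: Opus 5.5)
Your proof is correct and follows the same route as the paper: Abel summation against $P(t)=\int_{-1}^{t}p(s)\,{\rm d}s$, then integration by parts, then identifying the difference as the Gauss--Legendre error for the degree-$2k$ polynomial $q'P$, and finally norm equivalence on $P_k$ for the stability bound. Your version is sharper than the paper's: the correct constant is your $c_k=\frac{k}{k+1}\|\widetilde{\mathbb P}_k\|_{0,(-1,1)}^2$ (e.g.\ $c_1=\tfrac13$, not the paper's $\tfrac{k}{k+1}=\tfrac12$), and your bound $|q_i^*|\le C\|q\|_{0,(-1,1)}$ avoids the paper's intermediate claim $|q_i^*|\le\|q\|_{\infty,(-1,1)}$, which fails for $k=2$, $q=1-2t^2$, where $q_1^*=4/\sqrt{3}-1>1$.
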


\begin{proof}
Let
\[
P(t)
=
\int_{-1}^{t}p(s)\,{\rm d}s,
\]
then
\begin{align*}
B_k(p,q)
=
\sum_{i=0}^{k}
q_i^*
\int_{g_i}^{g_{i+1}}
p(t)\,{\rm d}t
=
\sum_{i=0}^{k}
q_i^*
\left(
P(g_{i+1})-P(g_i)
\right).
\end{align*}
A direct calculation yields
\begin{align*}
B_k(p,q)
=
q_k^*P(1)
-
\sum_{i=1}^{k}
(q_i^*-q_{i-1}^*)P(g_i)=
q(1)P(1)
-
\sum_{i=1}^{k}
A_iq'(g_i)P(g_i).
\end{align*}
By integration by parts, we have
\[
\int_{-1}^{1}p(t)q(t)\,{\rm d}t
=
q(1)P(1)
-
\int_{-1}^{1}q'(t)P(t)\,{\rm d}t.
\]
Consequently,
\begin{align}
B_k(p,q)
-
\int_{-1}^{1}p(t)q(t)\,{\rm d}t
&=
\int_{-1}^{1}q'(t)P(t)\,{\rm d}t
-
\sum_{i=1}^{k}
A_iq'(g_i)P(g_i)\nonumber\\
&=c_kp_kq_k,
\label{Bk-error}
\end{align}
where $c_k=\frac{k}{k+1}$ is the coefficient of the $2k$th-order Gaussian remainder term. This proves \eqref{Bk-identity}. Taking $p=q$ in \eqref{Bk-error} gives \eqref{Bk-positive}.

Finally, by \eqref{Tk-def} and the equivalence of norms in finite-dimensional space, we can obtain
\[
|q_i^*|
\leq \|q\|_{\infty,(-1,1)}\leq C\|q\|_{0,(-1,1)},
\]
thus
\[
\|\mathcal Q_kq\|_{0,(-1,1)}^2
=
\sum_{i=0}^{k}
(g_{i+1}-g_i)|q_i^*|^2
\le
C\|q\|_{0,(-1,1)}^2.
\]
\end{proof}


\begin{lemma}\label{lemma:Pi_star_bound}
Assume that $\mathcal T_h$ is a regular curved-edge mesh partition
of $\Omega_h$. For any $u_h\in\mathcal U^k_h$, we have
\begin{equation}\label{robin-Pi-full}
\|
\Pi_{h}^{k,*}u_h
\|_{\mathcal T_h^*}
\le
C
|||u_h|||_{\Omega_h}.
\end{equation}
\end{lemma}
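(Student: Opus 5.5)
We estimate the two parts of $\|\Pi_h^{k,*}u_h\|_{\mathcal T_h^*}^2$ separately: the jump seminorm $|v_h|_{\mathcal T_h^*}^2$ is bounded by $C|u_h|_{1,\Omega_h}^2$, and the boundary term $\|\sigma_h^{1/2}v_h\|_{0,\Gamma_h}^2$ by $C\|\sigma_h^{1/2}u_h\|_{0,\Gamma_h}^2$. Throughout, $v_h=\Pi_h^{k,*}u_h$ and $\widehat u_h=u_h|_{K^h}\circ\Psi_K$.

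\textbf{Interior jumps.} We first check that $\Pi_h^{k,*}$ is of tensor-product type. Writing $\widehat u_h=\sum_{\ell,m}c_{\ell m}a_\ell(\xi)b_m(\eta)$ with $a_\ell,b_m\in P_k$, the explicit formula \eqref{eq23} combined with \eqref{Tk-def} gives
\[
v_{i,j}^{K^h}=\sum_{\ell,m}c_{\ell m}(\mathcal Q_ka_\ell)_i^*(\mathcal Q_kb_m)_j^*,
\]
so on $\widehat K$ we have $\widehat v_h=(\mathcal Q_k\otimes\mathcal Q_k)\widehat u_h$. By \eqref{pro}, every jump across a dual edge inside $K^h$ is
\[
[v_h]_{i,j}^{K^h,x}=A_j\,\partial_\eta\big((\mathcal Q_k\otimes I)\widehat u_h\big)(\cdot,g_j)\quad\text{on }[g_i,g_{i+1}],
\]
and similarly for $[v_h]^{K^h,y}_{i,j}$.

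Next we bound these jumps on the reference element. Finite-dimensionality together with \eqref{Tk-stability} gives
\[
|[v_h]_{i,j}^{K^h,x}|+|[v_h]_{i,j}^{K^h,y}|\le C|\widehat u_h|_{1,\infty,\widehat K}\le C|\widehat u_h|_{1,\widehat K}.
\]
The key point is that each jump annihilates constants: if $\widehat u_h$ is constant, all differences vanish. This is what allows the seminorm $|\cdot|_{1,\widehat K}$ to appear on the right-hand side.

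We then return to the physical element. A dual edge $e\subset K^h$ has $h_e\sim h_K$, so $h_e^{-1}\int_e[v_h]^2\,{\rm d}s\le C\,[v_h]^2$. The regularity assumption \eqref{eq:mapping_scaled_bounds}, \eqref{eq:jacobian_lower_bound} gives the standard two-dimensional scaling
\[
|\widehat u_h|_{1,\widehat K}\le C\|\mathbb J_K\|_\infty\|J_K^{-1}\|_\infty^{1/2}|u_h|_{1,K^h}\le C|u_h|_{1,K^h}.
\]

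There is a subtlety for dual edges lying on $\partial K^h$, that is, pieces of edges $\Psi_K(\{\xi=\pm1\})$ separating two dual subregions belonging to different elements. Here we use that $v_h$ is globally well defined. The values $v_{i,0}^{K^h}$ computed from \eqref{eq:vi0} depend only on the trace of $\widehat u_h$ on the edge, and the edge parametrizations of neighbours agree up to orientation. Hence the jump across such an edge is an intra-element difference of the same type as above, and its bound is local to either neighbour. I expect this consistency check to be the main obstacle; verifying it needs the symmetry $A_i=A_{k+1-i}$, $g_i=-g_{k+1-i}$ when the orientation is reversed.

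Summing over elements, each of which contains at most $2k(k+1)$ dual edges, yields $|v_h|_{\mathcal T_h^*}^2\le C|u_h|_{1,\Omega_h}^2$.

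\textbf{Boundary term.} On a boundary edge of $K^h$, say $\eta=-1$, the restriction of $v_h$ is $\mathcal Q_k$ applied to $q=\widehat u_h(\cdot,-1)\in P_k$. Using the arclength scaling ${\rm d}s=r_\xi\,{\rm d}\xi$ with $r_\xi\sim h$ from the regularity assumption, the bound $\sigma_h\le\sigma_1$, and \eqref{Tk-stability}, we get
\[
\|\sigma_h^{1/2}v_h\|_{0,\Gamma_h\cap\partial K^h}^2\le Ch\|\mathcal Q_kq\|_{0,(-1,1)}^2\le Ch\|q\|_{0,(-1,1)}^2\le C\sigma_0^{-1}\|\sigma_h^{1/2}u_h\|_{0,\Gamma_h\cap\partial K^h}^2.
\]
Summing over boundary edges gives the boundary estimate, and combining it with the interior estimate proves \eqref{robin-Pi-full}.
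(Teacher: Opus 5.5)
Your argument is correct and follows essentially the paper's route. You express the jumps through $\mathcal Q_k$ via \eqref{pro}, bound them on $\widehat K$, scale to $K^h$, and treat the Robin term with \eqref{Tk-stability}, which you write out where the paper only says ``a direct calculation''. The one difference is the jump bound: you use the equivalence of $|\cdot|_{1,\infty,\widehat K}$ and $|\cdot|_{1,\widehat K}$ on $\mathbb Q_k(\widehat K)$ modulo constants, whereas the paper applies \eqref{Tk-stability} along each Gauss line and reassembles $|\widehat u_h|_{1,\widehat K}^2$ by Gauss exactness.

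One correction: segments of $\partial K^h$ are not dual edges, since the two subregions meeting there belong to the same control volume $K_P^{*,h}$, so there is no extra case to handle. Your symmetry check ($g_i=-g_{k+1-i}$, $A_i=A_{k+1-i}$) is what makes $\Pi_h^{k,*}u_h$ globally well defined, a point the paper leaves implicit; it is not an obstacle in the estimate itself.
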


\begin{proof}
Denote $v_h=\Pi_{h}^{k,*}u_h$. We first derive the representation of the jump
$[v_h]_{i,j}^{K^h,x}$. For fixed $j\in\mathbb Z_k$, using
\eqref{id1} and \eqref{pro}, we have
\begin{align}
[v_h]_{i,j}^{K^h,x}
-
[v_h]_{i-1,j}^{K^h,x}
&=
\lfloor v_h\rfloor_{i,j}^{K^h}=A_iA_j
\frac{\partial^2\widehat u_h}
{\partial\xi\partial\eta}
(g_i,g_j),
\qquad
i\in\mathbb Z_k.
\label{jump-x-recursion}
\end{align}
It follows from \eqref{pro} that one has
\begin{equation}\label{jump-x-initial}
[v_h]_{0,j}^{K^h,x}=v^{K^h,x}_{0,j}-v^{K^h,x}_{0,j-1}
=
A_j
\frac{\partial\widehat u_h}{\partial\eta}
(-1,g_j),
\end{equation}
which together with \eqref{jump-x-recursion} gives
\begin{align}
[v_h]_{i,j}^{K^h,x}
&=
[v_h]_{0,j}^{K^h,x}
+
\sum_{r=1}^{i}
\left(
[v_h]_{r,j}^{K^h,x}
-
[v_h]_{r-1,j}^{K^h,x}
\right)
\nonumber\\
&=
A_j
\frac{\partial\widehat u_h}{\partial\eta}
(-1,g_j)
+
A_j
\sum_{r=1}^{i}
A_r
\frac{\partial^2\widehat u_h}
{\partial\xi\partial\eta}
(g_r,g_j)
\nonumber\\
&=A_j\big(q_j(-1)+\sum_{r=1}^{i}A_rq_j'(g_r)\big),
\label{jump-x-explicit}
\end{align}
where $q_j(\xi)=\frac{\partial\widehat u_h}{\partial\eta}(\xi,g_j)\in P_k([-1,1])$. Using $(\mathcal Q_kq_j)_0=q_j(-1)$ and
$(\mathcal Q_kq_j)_i-(\mathcal Q_kq_j)_{i-1}=A_iq_j'(g_i)$, we have
\begin{equation}\label{Tk-explicit-x}
(\mathcal Q_kq_j)_i
=
q_j(-1)
+
\sum_{r=1}^{i}
A_rq_j'(g_r).
\end{equation}
Combining \eqref{jump-x-explicit} and \eqref{Tk-explicit-x}, one has
\begin{equation}\label{jump-Tx}
[v_h]_{i,j}^{K^h,x}
=
A_j
\left(
\mathcal Q_k
\big(
\frac{\partial\widehat u_h}
{\partial\eta}(\cdot,g_j)
\big)
\right)_i .
\end{equation}
Similarly, we obtain
\begin{equation}\label{jump-Ty}
[v_h]_{i,j}^{K^h,y}
=
A_i
\left(
\mathcal Q_k
\big(
\frac{\partial\widehat u_h}
{\partial\xi}(g_i,\cdot)
\big)
\right)_j .
\end{equation}
By the shape regularity of the mesh and \eqref{Tk-stability}, we have
\begin{align}
|v_h|_{\mathcal T_h^*}^2
&\le C
\sum_{K^h\in\mathcal T_h}
\bigg(
\sum_{j=1}^{k}\sum_{i=0}^{k}
|[v_h]_{i,j}^{K^h,x}|^2+
\sum_{i=1}^{k}\sum_{j=0}^{k}
|[v_h]_{i,j}^{K^h,y}|^2
\bigg)\nonumber\\
&\leq C
\sum_{K^h\in\mathcal T_h}\bigg(
\sum_{j=1}^{k}
|A_j|\|
\frac{\partial\widehat u_h}
{\partial\eta}(\cdot,g_j)
\|_{0,(-1,1)}^2+
\sum_{i=1}^{k}
|A_i|\|
\frac{\partial\widehat u_h}
{\partial\xi}(g_i,\cdot)
\|_{0,(-1,1)}^2\bigg)\nonumber\\
&\leq C\sum_{K^h\in\mathcal T_h}|\widehat{u}_h|^2_{1,\widehat{K}}\leq C|u_h|^2_{1,\Omega_h}.\label{ele}
\end{align}
A direct calculation yields
\begin{equation}\label{eq17}
\|
\sigma_h^{1/2}
\Pi_{h}^{k,*}u_h
\|_{0,\Gamma_{h}}
\le
C
\|
\sigma_h^{1/2}u_h
\|_{0,\Gamma_{h}},
\end{equation}
which together with \eqref{ele} gives
\eqref{robin-Pi-full}.
\end{proof}


For a matrix-valued function
$
\mathbb D
=
\begin{pmatrix}
d_{11}&d_{12}\\
d_{21}&d_{22}
\end{pmatrix},
$
we  define
\begin{align}
a_{K^h,\mathbb D}(u_h,v_h)
=&
\sum_{i=0}^{k}
\sum_{j=1}^{k}
[v_h]_{i,j}^{K^h,x}
\int_{g_i}^{g_{i+1}}
\left(
d_{21}
\frac{\partial\widehat u_h}{\partial\xi}
+
d_{22}
\frac{\partial\widehat u_h}{\partial\eta}
\right)(\xi,g_j)
\,{\rm d}\xi
\nonumber\\
&+
\sum_{i=1}^{k}
\sum_{j=0}^{k}
[v_h]_{i,j}^{K^h,y}
\int_{g_j}^{g_{j+1}}
\left(
d_{11}
\frac{\partial\widehat u_h}{\partial\xi}
+
d_{12}
\frac{\partial\widehat u_h}{\partial\eta}
\right)(g_i,\eta)
\,{\rm d}\eta.
\label{aKD-def}
\end{align}
By the normal derivative transformation \eqref{eq4} and \eqref{eq5}, we have
\begin{equation}\label{ah0K-aKD}
a_{h,0,K^h}(u_h,v_h)=a_{K^h,\mathbb D_K}(u_h,v_h),
\end{equation}
where the matrix $\mathbb D_K$ with  $\widehat \kappa_K=\kappa\circ\Psi_K$ is
\begin{equation}\label{DK-def}
\mathbb D_K(\xi,\eta)=
\frac{\widehat \kappa_K(\xi,\eta)}
{J_K(\xi,\eta)}
\begin{pmatrix}
r_\eta^2&-s\\
-s&r_\xi^2
\end{pmatrix}.
\end{equation}


\begin{theorem}\label{lemma4}
Suppose that $\Omega_h$ is a shape-regular curved quadrilateral mesh
approximation of $\Omega$. Then, for sufficiently small $h$, there exists a constant $C>0$
independent of $h$, such that
\begin{equation}\label{co1}
a_h(u_h,\Pi_{h}^{k,*}u_h)\geq C|||u_h|||^2_{\Omega_h},
\qquad
\forall u_h\in\mathcal U_h^k.
\end{equation}
\end{theorem}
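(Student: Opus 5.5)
We split $a_h(u_h,\Pi_h^{k,*}u_h)=\sum_{K^h}a_{h,0,K^h}(u_h,v_h)+r_h(u_h,v_h)$ with $v_h=\Pi_h^{k,*}u_h$. By \eqref{ah0K-aKD} each local diffusion term is $a_{K^h,\mathbb D_K}(u_h,v_h)$. We then freeze the coefficient matrix: write $\mathbb D_K=\overline{\mathbb D}_K+(\mathbb D_K-\overline{\mathbb D}_K)$, where $\overline{\mathbb D}_K$ is the mean of $\mathbb D_K$ over $\widehat K$. The principal part will be handled exactly by Lemma~\ref{lemma:gaussian-transfer}; the remainder will be absorbed as an $O(h)$ perturbation.

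\textbf{Principal part.} For a constant symmetric matrix $\overline{\mathbb D}$, substitute the jump representations \eqref{jump-Tx}--\eqref{jump-Ty} into \eqref{aKD-def}. The $d_{22}$ contribution is
\[
\sum_{j}A_j\int_{-1}^1 \partial_\eta\widehat u_h(\xi,g_j)\,\mathcal Q_k\big(\partial_\eta\widehat u_h(\cdot,g_j)\big)(\xi)\,{\rm d}\xi
=\sum_j A_j B_k(q_j,q_j),
\]
with $q_j=\partial_\eta\widehat u_h(\cdot,g_j)$. Since $q_j\in P_k$ in $\xi$, the identity \eqref{Bk-identity} gives $\int q_j^2+c_k(\text{lead coeff})^2$. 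Summing with the Gauss weights $A_j$ recovers $\|\partial_\eta\widehat u_h\|^2_{0,\widehat K}$: the $\eta$-direction is integrated exactly because $\widehat u_h$ has degree $\le k$ in $\eta$, so $q_j^2$ has degree $\le 2k$. Strictly, exactness needs degree $2k-1$, so the $\eta$-quadrature error is a nonnegative multiple of a squared leading coefficient, giving a positive contribution and hence a lower bound. The same holds for $d_{11}$. The cross terms $d_{12},d_{21}$ combine, via \eqref{Bk-identity} applied to $B_k(p,q)$ with $p=\partial_\xi\widehat u_h$, into $2\bar d_{12}\int_{\widehat K}\partial_\xi\widehat u_h\partial_\eta\widehat u_h$ plus terms $c_k\times$(top coefficients). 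The plan is to show the whole principal part is $\ge\int_{\widehat K}\nabla\widehat u_h^T\overline{\mathbb D}_K\nabla\widehat u_h+(\text{positive-semidefinite quadratic form in the top-degree coefficients of }\widehat u_h)$. Shape regularity together with \eqref{eq:jacobian_lower_bound} makes $\overline{\mathbb D}_K$ uniformly positive definite, with eigenvalues $\sim\kappa_0$, since $r_\xi^2r_\eta^2-s^2=J_K^2$. This bounds the principal part below by $C|\widehat u_h|^2_{1,\widehat K}\ge C|u_h|_{1,K^h}^2$. \emph{This coefficient bookkeeping for the mixed terms is the step I expect to be the main obstacle.} The leftover $c_k$-terms form a tensor-product quadratic form of the type $\bar d_{11}a^2+2\bar d_{12}ab+\bar d_{22}b^2$ in top coefficients, which is nonnegative by positive definiteness of $\overline{\mathbb D}_K$. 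If this fails to close exactly, the fallback is to show the difference from $\int\nabla\widehat u_h^T\overline{\mathbb D}\nabla\widehat u_h$ is a nonnegative form by a finite-dimensional check on the reference element.

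\textbf{Perturbation.} By \eqref{eq:metric_coeff_est} and $\kappa\in W^{1,\infty}$, we have $\|D\mathbb D_K\|_\infty\le Ch$, so $\|\mathbb D_K-\overline{\mathbb D}_K\|_{\infty,\widehat K}\le Ch$ by the $W^{1,\infty}$-Poincar\'e inequality. Cauchy--Schwarz in \eqref{aKD-def}, together with \eqref{jump-Tx}--\eqref{jump-Ty}, \eqref{Tk-stability} and norm equivalence on $\mathbb Q_k$, then gives $|a_{K^h,\mathbb D_K-\overline{\mathbb D}_K}(u_h,v_h)|\le Ch|\widehat u_h|^2_{1,\widehat K}\le Ch|u_h|^2_{1,K^h}$.

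\textbf{Robin term.} Along a boundary edge, $v_h$ restricted to $\Gamma_h$ is the one-dimensional transfer $\mathcal Q_k$ of the trace $q=\widehat u_h|_{\text{edge}}$. This holds by \eqref{pro} for edges with $\eta=-1$; for the other edges we use the analogous recursion, noting that $\mathcal Q_kq$ has the endpoint values of $q$. Hence
\[
r_h(u_h,v_h)=\int \widehat\sigma\,|\Psi_K'|\,q\,\mathcal Q_kq\,{\rm d}t .
\]
Freezing $\widehat\sigma|\Psi_K'|$ to its edge mean and applying \eqref{Bk-positive} gives $\ge\|\sigma_h^{1/2}u_h\|^2_{0,\Gamma_h}(1-Ch)$, with the error again controlled via \eqref{Tk-stability} and $W^{1,\infty}$ bounds on $\sigma$ and $\Psi_K$.

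\textbf{Conclusion.} Summing over elements yields $a_h(u_h,v_h)\ge (C_1-C_2h)|||u_h|||^2_{\Omega_h}$, which implies \eqref{co1} for $h$ sufficiently small.
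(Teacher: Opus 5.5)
Your overall route is the same as the paper's: freeze $\mathbb D_K$ at its mean, evaluate the frozen form through \eqref{jump-Tx}--\eqref{jump-Ty} and \eqref{Bk-identity}, absorb $a_{K^h,\mathbb D_K-\overline{\mathbb D}_K}$ as an $O(h)$ perturbation, and treat $r_h$ by freezing $\widehat\sigma|\Psi_K'|$ and using \eqref{Bk-positive}. The perturbation and Robin parts are fine. Your $W^{1,\infty}$ bound on $\mathbb D_K$ also accounts for the variation of $\widehat\kappa_K$, which \eqref{DKK-osc} leaves out. The problem is the principal part. You leave it as a plan, and two of the reasons you give there are wrong.

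\emph{Outer Gauss sums.} These are exact, not one-sided. The function summed with weights $A_j$ is $\eta\mapsto(\partial_\eta\widehat u_h(\xi,\eta))^2$. Differentiating in $\eta$ lowers the $\eta$-degree, so this has $\eta$-degree at most $2k-2$. For the mixed terms, $\eta\mapsto\partial_\xi\widehat u_h\,\partial_\eta\widehat u_h$ has $\eta$-degree at most $2k-1$. Both are integrated exactly by $k$-point Gauss. Your fallback also has the wrong sign. For $f=Q^2$ with $\deg Q=k$ and leading coefficient $a$, the difference $\int_{-1}^1 f-\sum_jA_jf(g_j)$ is a positive multiple of $a^2$. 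So the quadrature underestimates the integral, which would work against coercivity rather than give a lower bound.

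\emph{Mixed terms.} These produce no $c_k$-remainder at all. In $B_k(\partial_\xi\widehat u_h(\cdot,g_j),\partial_\eta\widehat u_h(\cdot,g_j))$ the first argument has $\xi$-degree at most $k-1$, so $p_k=0$ in \eqref{Bk-identity}. The same holds for the other mixed term. Your proposed leftover form $\overline d_{11}a^2+2\overline d_{12}ab+\overline d_{22}b^2$ could not be justified anyway. The $a$'s are the $\eta^k$-coefficients of $\partial_\xi\widehat u_h(g_i,\cdot)$, weighted by $A_i$. The $b$'s are the $\xi^k$-coefficients of $\partial_\eta\widehat u_h(\cdot,g_j)$, weighted by $A_j$. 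They are indexed by different Gauss points and are never paired, so positive definiteness of $\overline{\mathbb D}_K$ would not apply directly.

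With the degree count done correctly, the frozen form equals $\int_{\widehat K}(\widehat\nabla\widehat u_h)^T\overline{\mathbb D}_K\widehat\nabla\widehat u_h$ plus $c_k\big(\overline d_{22}\sum_jA_jb_j^2+\overline d_{11}\sum_iA_ia_i^2\big)$. This remainder is nonnegative simply because $\overline d_{11},\overline d_{22}>0$. That is exactly \eqref{eq60-pre}--\eqref{pp6} in the paper, and no finite-dimensional fallback is needed. The missing idea is just this: differentiation drops one degree in the differentiated variable, which makes every sum exact and removes every $c_k$ cross term. With it, your argument closes.
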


\begin{proof}
We will prove this lemma in the following three steps.

\medskip
\noindent
{\bf Step 1 (Properties of the transformed  matrix).}
Using the $W^{1,\infty}$ Poincar\'e inequality, we have
\begin{align}
&\|\mathbb D_K-\overline {\mathbb D_K}\|_{\infty,\widehat K}\nonumber\\
\leq& C\|D\mathbb D_K\|_{\infty,\widehat K}\nonumber\\
\leq& C(\big\|
D(r_\eta^2J^{-1}_K)
\big\|_{\infty,\widehat K}
+
\big\|
D(sJ^{-1}_K)
\big\|_{\infty,\widehat K}
+
\big\|
D(r_\xi^2J^{-1}_K)
\big\|_{\infty,\widehat K}),\label{DKK-osc}
\end{align}
which together with \eqref{eq:metric_coeff_est} yields
\begin{equation}\label{DK-osc}
\|\mathbb D_K-\overline {\mathbb D_K}\|_{\infty,\widehat K}\leq Ch.
\end{equation}
We recall that, by the uniform non-degeneracy of $\Psi_K$ and
$\kappa\ge a\kappa_0>0$, there exist constants $c_0,C_0>0$, independent of $h$, such that
\begin{equation}\label{DK-positive-proof}
c_0|\zeta|^2
\le
\zeta^T\mathbb D_K(\xi,\eta)\zeta
\le
C_0|\zeta|^2,
\qquad
\forall\zeta\in\mathbb R^2.
\end{equation}
Averaging \eqref{DK-positive-proof} over $\widehat K$ yields
\begin{equation}\label{DKbar-positive-proof}
c_0|\zeta|^2
\le
\zeta^T\overline{\mathbb D_K}\zeta
\le
C_0|\zeta|^2.
\end{equation}
\medskip
\noindent
{\bf Step 2 (Positivity of the $a_{h,0,K^h}(\cdot,\cdot))$.} For simplicity, we write
$\widehat u_\xi=\frac{\partial\widehat u_h}{\partial\xi}$ and $\widehat u_\eta
=\frac{\partial\widehat u_h}{\partial\eta}$, and denote 
\begin{equation}\label{DKbar-proof-def}
\begin{pmatrix}
\overline d_{11} & \overline d_{12}\\
\overline d_{12} & \overline d_{22}
\end{pmatrix}=\overline{\mathbb D_K}=
\frac{1}{|\widehat K|}
\int_{\widehat K}
\mathbb D_K(\xi,\eta)\,{\rm d}\xi{\rm d}\eta.
\end{equation}
Substituting \eqref{jump-Tx} and
\eqref{jump-Ty} into $a_{K^h,\overline{\mathbb D_K}}$ and using the definition of $B_k$ give
\begin{align}
a_{K^h,\overline {\mathbb D_K}}
\big(
u_h,\Pi_{h}^{k,*}u_h
\big)={}&
\overline d_{22}
\sum_{j=1}^{k}
A_j
B_k
\left(
\widehat u_\eta(\cdot,g_j),
\widehat u_\eta(\cdot,g_j)
\right)
\nonumber\\
&+
\overline d_{11}
\sum_{i=1}^{k}
A_i
B_k
\left(
\widehat u_\xi(g_i,\cdot),
\widehat u_\xi(g_i,\cdot)
\right)
\nonumber\\
&+
\overline d_{12}
\sum_{j=1}^{k}
A_j
B_k
\left(
\widehat u_\xi(\cdot,g_j),
\widehat u_\eta(\cdot,g_j)
\right)
\nonumber\\
&+
\overline d_{12}
\sum_{i=1}^{k}
A_i
B_k
\left(
\widehat u_\eta(g_i,\cdot),
\widehat u_\xi(g_i,\cdot)
\right).
\label{frozen-expansion-proof}
\end{align}
On one hand, for fixed $g_j$ and $g_i$, since $\widehat u_\eta(\cdot,g_j),\widehat u_\xi(g_i,\cdot)\in P_k([-1,1])$ and using \eqref{Bk-identity}, we have
\begin{align}
&B_k
\left(
\widehat u_\eta(\cdot,g_j),
\widehat u_\eta(\cdot,g_j)
\right)=
\int_{-1}^{1}
\widehat u_\eta^2(\xi,g_j)\,{\rm d}\xi
+c_k(\widehat u_\eta(\cdot,g_j))^2,
\label{diag-eta}\\
&B_k
\left(
\widehat u_\xi(g_i,\cdot),
\widehat u_\xi(g_i,\cdot)
\right)=
\int_{-1}^{1}
\widehat u_\xi^2(g_i,\eta)\,{\rm d}\eta
+c_k(\widehat u_\xi(g_i,\cdot))^2,
\label{diag-xi}
\end{align}
On the other hand, since $\widehat u_\xi(\cdot,g_j),\widehat u_\eta(g_i,\cdot)\in P_{k-1}([-1,1])$,  from
\eqref{Bk-identity}, one has
\begin{align}
B_k
\left(
\widehat u_\xi(\cdot,g_j),
\widehat u_\eta(\cdot,g_j)
\right)
&=
\int_{-1}^{1}
\widehat u_\xi(\xi,g_j)
\widehat u_\eta(\xi,g_j)
\,{\rm d}\xi,\label{mixed-1}\\
B_k
\left(
\widehat u_\eta(g_i,\cdot),
\widehat u_\xi(g_i,\cdot)
\right)
&=
\int_{-1}^{1}
\widehat u_\eta(g_i,\eta)
\widehat u_\xi(g_i,\eta)
\,{\rm d}\eta.\label{mixed-2}
\end{align}
Substituting
\eqref{diag-eta}-\eqref{mixed-2}
into \eqref{frozen-expansion-proof}, we obtain
\begin{align}
\nonumber a_{K^h,\overline {\mathbb D_K}}\big(u_h,\Pi_{h}^{k,*}u_h\big)={}&\overline d_{22}\sum_{j=1}^{k}A_j\Big(\int_{-1}^{1}
\widehat u_\eta^2(\xi,g_j)\,{\rm d}\xi
+
c_k(\widehat u_\eta(\cdot,g_j))^2\Big)\\
\nonumber&+\overline d_{11}\sum_{i=1}^{k}A_i\Big(\int_{-1}^{1}
\widehat u_\xi^2(g_i,\eta)\,{\rm d}\eta
+
c_k(\widehat u_\xi(g_i,\cdot))^2\Big)\\
\nonumber&+\overline d_{12}\sum_{j=1}^{k}A_j\Big(\int_{-1}^{1}
\widehat u_\xi(\xi,g_j)
\widehat u_\eta(\xi,g_j)
\,{\rm d}\xi\Big)\\
&+\overline d_{12}\sum_{i=1}^{k}A_i\Big(\int_{-1}^{1}
\widehat u_\eta(g_i,\eta)
\widehat u_\xi(g_i,\eta)
\,{\rm d}\xi\Big).\label{pp5}
\end{align}
Since the $k$-point Gaussian integral is exact for $\widehat u_\eta^2(\xi,\eta)\in P_{2k-2}([-1,1])$ (with respect to $\eta$), one has
\begin{align}
\sum_{j=1}^{k}
A_j
\int_{-1}^{1}
\widehat u_\eta^2(\xi,g_j)\,{\rm d}\xi
&=
\int_{-1}^{1}\int_{-1}^{1}
\widehat u_\eta^2(\xi,\eta)
\,{\rm d}\xi{\rm d}\eta=\int_{\widehat K}
\widehat u_\eta^2(\xi,\eta)
\,{\rm d}\xi{\rm d}\eta.
\label{outer-eta}
\end{align}
Similarly, we have
\begin{align}
&\sum_{i=1}^{k}
A_i
\int_{-1}^{1}
\widehat u_\xi^2(g_i,\eta)\,{\rm d}\eta
=
\int_{\widehat K}
\widehat u_\xi^2(\xi,\eta)
\,{\rm d}\xi{\rm d}\eta.
\label{outer-xi}\\
&\sum_{j=1}^{k}
A_j
\int_{-1}^{1}
\widehat u_\xi(\xi,g_j)
\widehat u_\eta(\xi,g_j)
\,{\rm d}\xi
=
\int_{\widehat K}
\widehat u_\xi\widehat u_\eta
\,{\rm d}\xi{\rm d}\eta,
\label{outer-mixed1}\\
&\sum_{i=1}^{k}
A_i
\int_{-1}^{1}
\widehat u_\eta(g_i,\eta)
\widehat u_\xi(g_i,\eta)
\,{\rm d}\eta
=
\int_{\widehat K}
\widehat u_\eta\widehat u_\xi
\,{\rm d}\xi{\rm d}\eta.
\label{outer-mixed2}
\end{align}
Substituting
\eqref{outer-eta}--\eqref{outer-mixed2}
into \eqref{pp5} yields
\begin{align}
a_{K^h,\overline {\mathbb D_K}}
\left(
u_h,\Pi_{h}^{k,*}u_h
\right)
=
\int_{\widehat K}
\left[
\overline d_{11}\widehat u_\xi^2
+
2\overline d_{12}
\widehat u_\xi\widehat u_\eta
+
\overline d_{22}\widehat u_\eta^2
\right]
\,{\rm d}\xi{\rm d}\eta
+
\mathcal R_K,
\label{eq60-pre}
\end{align}
where
\begin{align*}
\mathcal R_K
=
\overline d_{22}
\sum_{j=1}^{k}
A_j
c_k(\widehat u_\eta(\cdot,g_j))^2+\overline d_{11}
\sum_{i=1}^{k}
A_i
c_k(\widehat u_\xi(g_i,\cdot))^2\geq0.
\end{align*}
By \eqref{DKbar-positive-proof}, we obtain
\begin{align}\label{pp6}
a_{K^h,\overline{\mathbb D_K}}
\left(
u_h,\Pi_{h}^{k,*}u_h
\right)&=
\int_{\widehat K}
(\widehat\nabla\widehat u_h)^T
\overline{\mathbb D_K}
\widehat\nabla\widehat u_h
\,{\rm d}\xi{\rm d}\eta\geq c_0
|\widehat u_h|_{1,\widehat K}^2.
\end{align}

We set
\[
E_K
:=
\mathbb D_K-\overline {\mathbb D_K}
=
\begin{pmatrix}
e_{11}&e_{12}\\
e_{21}&e_{22}
\end{pmatrix}.
\]
The fact $
a_{h,0,K^h}(u_h,v_h)
=
a_{K^h,\mathbb D_K}(u_h,v_h)$ gives
\begin{align}
a_{h,0,K^h}
\left(
u_h,\Pi_{h}^{k,*}u_h
\right)
-
a_{K^h,\overline D_K}
\left(
u_h,\Pi_{h}^{k,*}u_h
\right)
=
a_{K^h,E_K}
\left(
u_h,\Pi_{h}^{k,*}u_h
\right).
\label{perturb-identity}
\end{align}
Using
\eqref{frozen-expansion-proof} yields
\begin{align*}
&
\left|
a_{K,E_K}
\left(
u_h,\Pi_{h}^{k,*}u_h
\right)
\right|
\\
\le{}&
\|E_K\|_{\infty,\widehat K}
\sum_{j=1}^{k}
A_j
\int_{-1}^{1}
\left|
\mathcal Q_k
\big(
\widehat u_\eta(\cdot,g_j)
\big)
\right|
\left(
|\widehat u_\xi(\xi,g_j)|
+
|\widehat u_\eta(\xi,g_j)|
\right)
\,{\rm d}\xi
\\
&+
\|E_K\|_{\infty,\widehat K}
\sum_{i=1}^{k}
A_i
\int_{-1}^{1}
\left|
\mathcal Q_k
\big(
\widehat u_\xi(g_i,\cdot)
\big)
\right|
\left(
|\widehat u_\xi(g_i,\eta)|
+
|\widehat u_\eta(g_i,\eta)|
\right)
\,{\rm d}\eta\\
={}&I_x+I_y.
\end{align*}
For $I_x$, it follows from Cauchy--Schwarz inequality and \eqref{Tk-stability} that one has
\begin{align*}
I_x
\le{}&
C\|E_K\|_{\infty,\widehat K}
\sum_{j=1}^{k}
A_j
\left\|
\mathcal Q_k
\big(
\widehat u_\eta(\cdot,g_j)
\big)
\right\|_{0,(-1,1)}
\\
&\qquad\times
\left(
\|\widehat u_\xi(\cdot,g_j)\|_{0,(-1,1)}
+
\|\widehat u_\eta(\cdot,g_j)\|_{0,(-1,1)}
\right)\\
\le{}&
C\|E_K\|_{\infty,\widehat K}
\left(
\sum_{j=1}^{k}
A_j
\|\widehat u_\eta(\cdot,g_j)\|_0^2
\right)^{1/2}
\\
&\quad\times
\left(
\sum_{j=1}^{k}
A_j
\left(
\|\widehat u_\xi(\cdot,g_j)\|_0
+
\|\widehat u_\eta(\cdot,g_j)\|_0
\right)^2
\right)^{1/2}\\
\leq{}& C\|E_K\|_{\infty,\widehat K}
|\widehat u_h|^2_{1,\widehat K}.
\end{align*}
The same argument for $I_y$ gives
\[
I_y
\le
C
\|E_K\|_{\infty,\widehat K}
|\widehat u_h|_{1,\widehat K}^2.
\]
Therefore, we have
\begin{equation}\label{eq62}
\left|
a_{K^h,E_K}
\left(
u_h,\Pi_{h}^{k,*}u_h
\right)
\right|
\le
C
\|\mathbb D_K-\overline{\mathbb D_K}\|_{\infty,\widehat K}
|\widehat u_h|_{1,\widehat K}^2\leq Ch|\widehat u_h|_{1,\widehat K}^2.
\end{equation}
Combining \eqref{pp6}, \eqref{perturb-identity} and \eqref{eq62}, one has
\begin{align*}
a_{h,0,K^h}
\left(
u_h,\Pi_{h}^{k,*}u_h
\right)
&\ge
a_{K^h,\overline {\mathbb D_K}}
\left(
u_h,\Pi_{h}^{k,*}u_h
\right)
-
Ch
|\widehat u_h|_{1,\widehat K}^2
\\
&\ge
(c_0-Ch)
|\widehat u_h|_{1,\widehat K}^2,
\end{align*}
which implies 
\begin{equation}\label{local-diff-coercive}
a_{h,0,K^h}
\left(
u_h,\Pi_{h}^{k,*}u_h
\right)
\ge
C
|u_h|_{1,K^h}^2
\end{equation}
for sufficiently small $h$. 

\medskip
\noindent
{\bf Step 3 (Positivity of the 
$r_h(\cdot,\cdot)$).} The boundary term can be written as
\begin{equation}\label{rh-edge-sum}
r_h(
u_h,\Pi_{h}^{k,*}u_h)
=
\sum_{e_h\in\Gamma_h}
\int_{e_h}
\sigma_h u_h
\Pi_{h}^{k,*}u_h
\,{\rm d}s:=\sum_{e_h\in\Gamma_h}r_{h,e}
(
u_h,\Pi_{h}^{k,*}u_h
).
\end{equation}
For each $e_h=\Psi_K(\widehat e)\in\mathcal E_h^b$, we obtain
\begin{align}
r_{h,e}(
u_h,\Pi_{h}^{k,*}u_h)&=
\int_{\widehat{e}}
\sigma_h\bigl(\Psi_K(t)\bigr)
u_h\bigl(\Psi_K(t)\bigr)
\Pi_{h}^{k,*}u_h
\bigl(\Psi_K(t)\bigr)
\rho_e(t)\,{\rm d}t\nonumber\\
&=\int_{\widehat{e}}
w_e(t)
q_e(t)
\mathcal Q_kq_e(t)
\,{\rm d}t
\label{rh-reference-edge}
\end{align}
where $\rho_e(t)=\left|\frac{{\rm d}}{{\rm d}t}\Psi_K(t)\right|$, $q_e(t)=u_h\bigl(\Psi_K(t)\bigr)$ and $w_e(t)=\sigma_h\bigl(\Psi_K(t)\bigr)\rho_e(t).$
By Lemma~\ref{lem:geometric_coefficient_bounds} and boundedness of $\sigma_h$, one has
\begin{equation}\label{we-osc-proof}
\|w_e-\overline w_e\|_{\infty,(-1,1)}
\le
Ch\overline w_e,
\end{equation}
where $\overline w_e$ is the mean value of $w_e$ in $[-1,1]$. Thus,  
\begin{align}
r_{h,e}(
u_h,\Pi_{h}^{k,*}u_h)&=
\overline w_e
\int_{\widehat{e}}
q_e\mathcal Q_kq_e\,{\rm d}t
+
\int_{\widehat{e}}
(w_e-\overline w_e)
q_e\mathcal Q_kq_e
\,{\rm d}t
\nonumber\\
&=
\overline w_e B_k(q_e,q_e)
+
\int_{\widehat{e}}
(w_e-\overline w_e)
q_e\mathcal Q_kq_e
\,{\rm d}t,
\label{robin-split-proof}
\end{align}
which together with Lemma~\ref{lemma:gaussian-transfer} gives
\begin{align*}
&r_{h,e}(
u_h,\Pi_{h}^{k,*}u_h)
\\
\ge&
C\big(\overline w_e
\|q_e\|_{0,(-1,1)}^2
-
\|w_e-\overline w_e\|_{\infty,(-1,1)}
\|q_e\|_{0,(-1,1)}
\|\mathcal Q_kq_e\|_{0,(-1,1)}\big)
\\
\ge&
(1-Ch)\overline w_e\|q_e\|_{0,(-1,1)}^2.
\end{align*}
Thus, for sufficiently small $h$, one has
\begin{equation}\label{rh-edge-positive}
r_{h,e}(
u_h,\Pi_{h}^{k,*}u_h)
\ge
C\overline w_e
\|q_e\|_{0,(-1,1)}^2,
\end{equation}
which implies
\begin{equation}\label{robin-coercive-proof}
r_h(
u_h,\Pi_{h}^{k,*}u_h)
\ge
C
\|\sigma_h^{1/2}u_h\|_{0,\Gamma_h}^2.
\end{equation}
Finally, combining \eqref{local-diff-coercive} and \eqref{robin-coercive-proof}, we complete the proof.
\end{proof}

\begin{theorem}\label{lemma42}
Under the assumptions of Theorem~\ref{lemma4}, there exists a positive
constant $C$ independent of $h$, such that
\begin{equation}\label{co2}
\inf_{0\ne u_h\in\mathcal U_h^k}
\sup_{0\ne v_h\in\mathcal V_h}
\frac{
a_h(u_h,v_h)
}{
|||u_h|||_{\Omega_h}
\|v_h\|_{\mathcal T_h^*}
}
\ge
C.
\end{equation}
\end{theorem}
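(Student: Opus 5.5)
The inf-sup bound follows directly from Theorem~\ref{lemma4} and Lemma~\ref{lemma:Pi_star_bound}, by taking the dual interpolant of the trial function as the test function.

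Fix $0\ne u_h\in\mathcal U_h^k$ and set $v_h=\Pi_h^{k,*}u_h\in\mathcal V_h$, which is well defined by Lemma~\ref{lemma:robin-pi-existence}.

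\textbf{Lower bound for the numerator.} By Theorem~\ref{lemma4}, for $h$ sufficiently small,
\[
a_h(u_h,v_h)\ge C_1|||u_h|||^2_{\Omega_h}.
\]

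\textbf{Upper bound for the test norm.} By Lemma~\ref{lemma:Pi_star_bound},
\[
\|v_h\|_{\mathcal T_h^*}\le C_2|||u_h|||_{\Omega_h}.
\]

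\textbf{Nondegeneracy of $v_h$.} The case $v_h=0$ cannot occur. If it did, the first bound would give $0\ge C_1|||u_h|||^2_{\Omega_h}$, so $|||u_h|||_{\Omega_h}=0$. Since $|||\cdot|||_{\Omega_h}$ is a norm equivalent to the $H^1$ norm (this uses $\sigma_0>0$), that would force $u_h=0$, contradicting our choice.

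\textbf{Conclusion.} Dividing the two bounds,
\[
\sup_{0\ne w_h\in\mathcal V_h}\frac{a_h(u_h,w_h)}{|||u_h|||_{\Omega_h}\|w_h\|_{\mathcal T_h^*}}
\ge\frac{a_h(u_h,v_h)}{|||u_h|||_{\Omega_h}\|v_h\|_{\mathcal T_h^*}}
\ge\frac{C_1|||u_h|||^2_{\Omega_h}}{C_2|||u_h|||^2_{\Omega_h}}=\frac{C_1}{C_2}.
\]
Taking the infimum over $u_h$ proves \eqref{co2} with $C=C_1/C_2$, which is independent of $h$.

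There is no substantial obstacle here. The only care needed is that the smallness condition on $h$ from Theorem~\ref{lemma4} carries over, and that the constants in Theorem~\ref{lemma4} and Lemma~\ref{lemma:Pi_star_bound} are both independent of $h$ under the shape-regularity assumption. For the finitely many coarse meshes where $h$ is not small enough, nothing further is claimed, because the statement inherits the hypotheses of Theorem~\ref{lemma4}.
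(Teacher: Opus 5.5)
Your proposal is correct and is exactly the paper's argument: test with $\Pi_h^{k,*}u_h$, bound the numerator below by the coercivity of Theorem~\ref{lemma4} and the test norm above by Lemma~\ref{lemma:Pi_star_bound}, divide, and take the infimum. Your explicit check that $\Pi_h^{k,*}u_h\neq 0$ (so the quotient is defined) is a small point of care that the paper leaves implicit.
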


\begin{proof}
By Lemma~\ref{lemma4}, we have
\[
a_h(
u_h,\Pi_{h}^{k,*}u_h)
\ge
C|||u_h|||^2_{\Omega_h}.
\]
Furthermore, Lemma~\ref{lemma:Pi_star_bound} yields
\[
\|
\Pi_{h}^{k,*}u_h
\|_{\mathcal T_h^*,R}
\le
C|||u_h|||_{\Omega_h}.
\]
Therefore, one has
\begin{align}
\sup_{0\ne v_h\in\mathcal V_h}
\frac{
a_h(u_h,v_h)
}{
\|v_h\|_{\mathcal T_h^*}
}
\ge
\frac{
a_h
\left(
u_h,\Pi_{h}^{k,*}u_h
\right)
}{
\|
\Pi_{h}^{k,*}u_h
\|_{\mathcal T_h^*}
}\ge
C
|||u_h|||_{\Omega_h}.
\label{ineq1}
\end{align}
Dividing \eqref{ineq1} by $|||u_h|||_{\Omega_h}$ and taking the infimum over
all $0\ne u_h\in\mathcal U_h^k$, we obtain
\[
\inf_{0\ne u_h\in\mathcal U_h^k}
\sup_{0\ne v_h\in\mathcal V_h}
\frac{
a_h(u_h,v_h)
}{
|||u_h|||_{\Omega_h}
\|v_h\|_{\mathcal T_h^*}
}
\ge
C.
\]
This completes the proof.
\end{proof}

\subsection{Boundedness}

To end this section, we establish the boundedness of the
bilinear form \eqref{robin-ah-decomp}.

\begin{theorem}\label{thm:boundedness}
The bilinear form $a_h(\cdot,\cdot)$ is bounded. More precisely,
for any $u_h\in\mathcal U_h^k$ and $v_h\in\mathcal V_h$, we have
\begin{equation}\label{bou}
|a_h(u_h,v_h)|
\le
C
|||u_h|||_{\Omega_h}
\|v_h\|_{\mathcal T_h^*}.
\end{equation}
\end{theorem}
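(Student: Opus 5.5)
We split $a_h=a_{h,0}+r_h$ as in \eqref{robin-ah-decomp} and bound each part separately. The diffusion part is bounded by $|u_h|_{1,\Omega_h}|v_h|_{\mathcal T_h^*}$, and the Robin part by $\|\sigma_h^{1/2}u_h\|_{0,\Gamma_h}\|\sigma_h^{1/2}v_h\|_{0,\Gamma_h}$. Summing and applying the discrete Cauchy--Schwarz inequality then gives \eqref{bou}.

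\emph{Diffusion part.} We start from the edge representation \eqref{bil}, or equivalently from the elementwise form \eqref{local-diffusion-form}. On a single dual edge $e$ we estimate
\[
\Big|[v_h]_e\int_e\kappa_h\frac{\partial u_h}{\partial\bm n_e}\,{\rm d}s\Big|
\le \|\kappa\|_{\infty}\,|[v_h]_e|\,h_e^{1/2}\,\|\nabla u_h\|_{0,e}.
\]
Since $[v_h]_e$ is constant on $e$, we have $|[v_h]_e|h_e^{1/2}=h_e\big(h_e^{-1}\int_e[v_h]_e^2\,{\rm d}s\big)^{1/2}$. The remaining task is the trace-type estimate
\[
h_e\|\nabla u_h\|_{0,e}^2\le C|u_h|_{1,K^h}^2,\qquad e\subset K^h .
\]
To prove it, pull back to $\widehat K$. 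The dual edge $e=E^{K^h,x}_{i,j}$ is the image of a segment $\eta=g_j$ in $\widehat K$. By Lemma~\ref{lem:geometric_coefficient_bounds} and \eqref{eq:jacobian_lower_bound}, $|\nabla u_h|\le C h^{-1}|\widehat\nabla\widehat u_h|$ and ${\rm d}s\le Ch\,{\rm d}\xi$. It follows that $h_e\|\nabla u_h\|_{0,e}^2\le C\int_{-1}^1|\widehat\nabla\widehat u_h(\xi,g_j)|^2{\rm d}\xi$. This last quantity is bounded by $C|\widehat u_h|_{1,\widehat K}^2$ by equivalence of norms on the finite-dimensional space $\mathbb Q_k(\widehat K)$ modulo constants, since gradients vanish on constants. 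Finally, $|\widehat u_h|_{1,\widehat K}\le C|u_h|_{1,K^h}$ by the same Jacobian bounds. Summing over edges with Cauchy--Schwarz, and using that each element contains a fixed number $2k(k+1)$ of dual edges, gives
\[
|a_{h,0}(u_h,v_h)|\le C|u_h|_{1,\Omega_h}|v_h|_{\mathcal T_h^*}.
\]
An alternative route uses \eqref{ah0K-aKD} with $\|\mathbb D_K\|_\infty\le C_0$ from \eqref{DK-positive-proof} directly in reference variables, which avoids measuring $h_e$.

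\emph{Robin part.} By definition,
\[
r_h(u_h,v_h)=\sum_{P\in\mathcal N^b}v_{K_P^{*,h}}\int_{\Gamma_{P,h}}\sigma_h u_h\,{\rm d}s=\int_{\Gamma_h}\sigma_h u_h v_h\,{\rm d}s ,
\]
because $v_h$ is piecewise constant on the $\Gamma_{P,h}$. Writing $\sigma_h=\sigma_h^{1/2}\sigma_h^{1/2}$ and applying Cauchy--Schwarz gives
\[
|r_h(u_h,v_h)|\le\|\sigma_h^{1/2}u_h\|_{0,\Gamma_h}\|\sigma_h^{1/2}v_h\|_{0,\Gamma_h}.
\]
Combining the two bounds with the definitions \eqref{robin-energy-norm} and \eqref{robin-test-norm} completes the argument.

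The main obstacle is the uniform trace-type estimate on curved dual edges, i.e.\ checking that the constant is independent of $h$ and of the element distortion. We would handle it entirely on $\widehat K$ using \eqref{eq:mapping_scaled_bounds}–\eqref{eq:jacobian_lower_bound}, which also yield $h_e\simeq h$ for the mapped segments.
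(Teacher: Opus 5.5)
Your proposal is correct and follows the paper's proof. Like the paper, you split $a_h=a_{h,0}+r_h$, bound the diffusion part by $C|u_h|_{1,\Omega_h}|v_h|_{\mathcal T_h^*}$ and the Robin part by Cauchy--Schwarz, and then combine the two; you also supply the scaled trace estimate on dual edges (pullback to $\widehat K$ plus norm equivalence on $\mathbb Q_k(\widehat K)$ modulo constants) that the paper hides in its one-line ``Cauchy--Schwarz'' step. One small slip: since $h_e^{-1}\int_e[v_h]_e^2\,{\rm d}s=[v_h]_e^2$, the identity should read $|[v_h]_e|h_e^{1/2}=h_e^{1/2}\big(h_e^{-1}\int_e[v_h]_e^2\,{\rm d}s\big)^{1/2}$, with prefactor $h_e^{1/2}$ rather than $h_e$. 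That corrected form is exactly what pairs with your trace bound $h_e\|\nabla u_h\|_{0,e}^2\le C|u_h|_{1,K^h}^2$, so the argument is unaffected.
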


\begin{proof}
Recall that
\[
a_h(u_h,v_h)
=
a_{h,0}(u_h,v_h)
+
r_h(u_h,v_h).
\]
Using the Cauchy-Schwarz inequality, we have
\begin{align}
|a_{h,0}(u_h,v_h)|
=
\Big|\sum_{e\in\mathcal E_{\mathcal T_h^*}}
[v_h]_e
\int_e
\kappa_h(x,y)
\frac{\partial u_h}{\partial\bm n_e}
\,{\rm d}s\Big|\leq C
|u_h|_{1,\Omega_h}
|v_h|_{\mathcal T_h^*}\label{bou1}
\end{align}
and
\begin{equation}\label{bou2}
|r_h(u_h,v_h)|
=
\Big|\sum_{P\in\mathcal N^b}
v_{K_P^{*,h}}
\int_{\Gamma_{P,h}}
\sigma_hu_h\,{\rm d}s\Big|\leq C
\|\sigma_h^{1/2}u_h\|_{0,\Gamma_h}
\|\sigma_h^{1/2}v_h\|_{0,\Gamma_h}.
\end{equation}
Thus,
\begin{align*}
|a_h(u_h,v_h)|
&\le
C
|u_h|_{1,\Omega_h}
|v_h|_{\mathcal T_h^*}+
\|\sigma_h^{1/2}u_h\|_{0,\Gamma_h}
\|\sigma_h^{1/2}v_h\|_{0,\Gamma_h}
\\
&\le
C
\left(
|u_h|_{1,\Omega_h}^2
+
\|\sigma_h^{1/2}u_h\|_{0,\Gamma_h}^2
\right)^{1/2}
\left(
|v_h|_{\mathcal T_h^*}^2
+
\|\sigma_h^{1/2}v_h\|_{0,\Gamma_h}^2
\right)^{1/2}
\\
&=
C|||u_h|||_{\Omega_h}
\|v_h\|_{\mathcal T_h^*}.
\end{align*}
\end{proof}

\section{Error Analysis}

In this section, we establish the error estimates in the \(H^1\)-norm
and \(L^2\)-norm. Since each curved-edge element \(K\) may not
coincide with its geometric approximation \(K^h\), the geometric error
has to be taken into account. For clarity, we state below the geometric properties of the boundary
approximation that will be used in the subsequent analysis.

\begin{assumption}\label{ass_geo}
For each boundary element $K\in\mathcal T$, let
$S_K=(K^h\setminus K)\cup(K\setminus K^h)$
denote the local geometric mismatch region. We assume that $S_K$
can be covered by a uniformly bounded number of open rectangles $\mathcal R_K
=\{R_1,R_2,\ldots,R_{n(K)}\}$.
For each $R\in\mathcal R_K$, it contains a portion of
$\partial\Omega\cap\partial K$ and a portion of
$\partial\Omega_h\cap\partial K^h$, and is defined by a translation and rotation of the $(0,L_R)\times(0,H_R)$ with $\max\{L_R,H_R\}\le Ch$.
For every $K\in\mathcal T$ and every $R\in\mathcal R_K$, the exact and approximate boundary
portions admit  parametrizations
\[
\gamma_{K,R}:[-1,1]
\rightarrow
\partial\Omega\cap\partial K\cap R,
\]
and
\[
\gamma_{K,R}^h:[-1,1]
\rightarrow
\partial\Omega_h\cap\partial K^h\cap R,
\]
with the same orientation. Let $\bm n_{K,R}$ and $\bm n_{K,R}^h$ be the unit normal vectors
corresponding to the prescribed common orientation, and define 
\[
N_{K,R}(\xi)=
\bm n_{K,R}(\xi)|\gamma_{K,R}'(\xi)|,
\qquad
N_{K,R}^h(\xi)=
\bm n_{K,R}^h(\xi)|(\gamma_{K,R}^h)'(\xi)|.
\]
We assume that, for some $\alpha\ge2$,
\begin{equation}\label{geo-position}
\|\gamma_{K,R}^h-\gamma_{K,R}\|_{\infty,(-1,1)}
\le
Ch^\alpha,
\end{equation}
and
\begin{equation}\label{geo-tangent}
\|N^h_{K,R}-N_{K,R}\|_{\infty,(-1,1)}
\le
Ch^\alpha.
\end{equation}
Moreover, the parametrizations are uniformly regular:
\begin{equation}\label{geo-param-regularity}
ch
\le
|\gamma_{K,R}'(\xi)|,\,|(\gamma_{K,R}^h)'(\xi)|
\le
Ch,
\qquad
\xi\in[-1,1].
\end{equation}
\end{assumption}
In this assumption, the parameter $\alpha$ is determined by the degree of the polynomial
used for boundary interpolation. For example, when a sufficiently
smooth boundary is approximated by a polynomial of degree $q$, we have $\alpha=q+1$. Numerical example 6.3 illustrates this relationship intuitively.
Furthermore, the assumption implies that each line passing through the height of the rectangle
intersects each boundary exactly once. Therefore, it does not include the case shown in Figure \ref{fig:dd}(b).


\begin{figure}[htbp]
    \centering
    \begin{subfigure}[b]{0.45\textwidth}
        \centering
        \includegraphics[width=\linewidth]{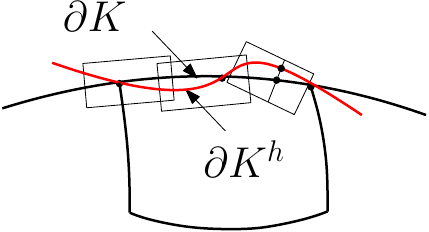}
        \caption{}
    \end{subfigure}
    \hfill
    \begin{subfigure}[b]{0.45\textwidth}
        \centering
        \includegraphics[width=\linewidth]{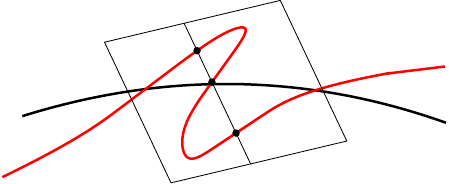}
        \caption{}
    \end{subfigure}
    \vspace{0.5em}
    \caption{Parts of boundary $\partial K$ (black line) and $\partial K^h $(red line). (a) Case satisfying Assumption \ref{ass_geo};  (b) Case not satisfying Assumption \ref{ass_geo} }
    \label{fig:dd}
\end{figure}


\subsection{$H^1$ error analysis}
\begin{theorem}\label{theorem4}
Let $\Omega_h$ be a regular approximation of $\Omega$. Assume that
$\Gamma=\partial\Omega$ is piecewise smooth, and that the
geometric approximation satisfies Assumption~\ref{ass_geo}. Let $U_\Gamma$ be a fixed open neighborhood of $\Gamma$ independent of
$h$ satisfying $\overline{\Omega\cup U_\Gamma}\subset \widetilde\Omega.$
For sufficiently small $h$, we assume that $\Omega_h\subset \widetilde\Omega$ and $(\Omega\setminus\Omega_h)\cup(\Omega_h\setminus\Omega)\subset U_\Gamma$.
Let $u\in H^{k+1}(\Omega)$ and $u_h\in\mathcal U_h^k$ be the solutions of
\eqref{model} and \eqref{scheme} respectively, and define
\[
F:=\widetilde\kappa\nabla\widetilde u,
\qquad
H:=\widetilde\sigma\,\widetilde u-\widetilde g,
\]
where $\widetilde u$ is a
bounded extension of $u$ to $\widetilde\Omega$. If $F\in H^2(U_\Gamma)^2$ and $H\in H^2(U_\Gamma)$, we have
\begin{equation}\label{eqq28}
\|\widetilde u-u_h\|_{1,\Omega_h}
\le
C\left(
h^k\|u\|_{k+1,\Omega}
+
h^{\alpha-1}\mathcal M_\Gamma
\right),
\end{equation}
where $\mathcal M_\Gamma:=\|F\|_{2,U_\Gamma}+\|H\|_{2,U_\Gamma}$.
\end{theorem}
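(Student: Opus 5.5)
The approach is a Strang-type argument built on the inf-sup condition of Theorem~\ref{lemma42} and the boundedness of Theorem~\ref{thm:boundedness}. Split $\widetilde u-u_h=(\widetilde u-\Pi_h^k\widetilde u)+(\Pi_h^k\widetilde u-u_h)$. The first part is bounded by $Ch^k\|u\|_{k+1,\Omega}$ using Lemma~\ref{lemma11}, after extending $u$ stably to $\widetilde\Omega$. For the discrete part $w_h:=\Pi_h^k\widetilde u-u_h$, Theorem~\ref{lemma42} gives some $v_h\in\mathcal V_h$ with
\[
C|||w_h|||_{\Omega_h}\|v_h\|_{\mathcal T_h^*}\le a_h(w_h,v_h)=a_h(\Pi_h^k\widetilde u-\widetilde u,v_h)+\big[a_h(\widetilde u,v_h)-(f_h,v_h)-\langle g_h,v_h\rangle\big].
\]
Here $a_h(\widetilde u,\cdot)$ is defined by the same flux formula, which makes sense for $\widetilde u\in H^2$ locally. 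The norm $|||\cdot|||_{\Omega_h}$ is equivalent to the $H^1$ norm, so it suffices to bound the two brackets by $Ch^k\|u\|_{k+1}+Ch^{\alpha-1}\mathcal M_\Gamma$, each times $\|v_h\|_{\mathcal T_h^*}$.

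\textbf{Interpolation term.} I would repeat the proof of Theorem~\ref{thm:boundedness} with $u_h$ replaced by $\eta=\widetilde u-\Pi_h^k\widetilde u$. For each dual edge $e$ of length $\sim h$, a scaled trace inequality on the reference element gives
\[
\Big|\int_e\kappa\,\partial_{\bm n}\eta\Big|\le Ch^{1/2}h^{-1/2}\big(|\eta|_{1,K^h}+h|\eta|_{2,K^h}\big).
\]
Summing with the weight $h_e^{-1/2}|[v_h]_e|h_e^{1/2}$ and applying \eqref{eq:interp_error} with $s=1,2$ yields $Ch^k\|u\|_{k+1}|v_h|_{\mathcal T_h^*}$. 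The Robin part is treated the same way through the edge trace of $\eta$.

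\textbf{Consistency term.} This is the main obstacle. Write $a_h(\widetilde u,v_h)$ as $\sum_P v_P\big(-\int_{\partial_0K_P^*}F\cdot\bm n+\int_{\Gamma_{P,h}}\widetilde\sigma\widetilde u\big)$ and apply the divergence theorem on each $K_P^{*,h}$. This gives
\[
\sum_P v_P\Big[-\int_{K_P^*}\nabla\cdot F+\int_{\Gamma_{P,h}}(F\cdot\bm n^h+H)\Big].
\]
On $\Omega_h\cap\Omega$ we have $-\nabla\cdot F=f=f_h$, so the volume residual lives only in $\Omega_h\setminus\Omega$, where $f_h=0$. This leaves two residuals.
\begin{itemize}
\item The volume residual $\int_{\Omega_h\setminus\Omega}\nabla\cdot F\,v_h$ is supported in the strips $S_K$, of width $O(h^\alpha)$ by \eqref{geo-position}.
\item The boundary residual is $\int_{\Gamma_h}(F\cdot\bm n^h+H)v_h$. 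It vanishes when the integrand is evaluated on $\Gamma$, because $F\cdot\bm n+H=0$ there by \eqref{model_2}.
\end{itemize}

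I would localize to the rectangles $R\in\mathcal R_K$ of Assumption~\ref{ass_geo} and compare parametrizations. Writing $F\cdot N^h(\gamma^h)-F\cdot N(\gamma)$ and similarly for $H$, the conditions \eqref{geo-position}, \eqref{geo-tangent} and \eqref{geo-param-regularity} give a pointwise discrepancy of order $h^{\alpha}\big(|\nabla F|+|\nabla H|+h^{-1}|F|\big)$ along a line joining the two curves. Integrating across the strip with the one-dimensional fundamental theorem of calculus and using the $H^2(U_\Gamma)$ regularity, the Sobolev trace bounds $\|\nabla F\|_{0,\text{strip}}\le Ch^{\alpha/2}\|F\|_{2,U_\Gamma}$-type estimates control the product. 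I expect a total bound of the form $Ch^{\alpha-1}\mathcal M_\Gamma\|v_h\|_{0,\Gamma_h}$, where $h^{-1}$ is lost through $N$ scaling like $h$ against the parameter length. The volume strip term is bounded by $\|\nabla\cdot F\|_{0,S}\|v_h\|_{0,S}$; here I would use an inverse-type estimate controlling $v_h$ on the $h^\alpha$-thin strip by its boundary trace times $h^{\alpha/2}$, which is again within $h^{\alpha-1}$.

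Finally, $\|v_h\|_{0,\Gamma_h}\le C\|v_h\|_{\mathcal T_h^*}$ because $\sigma_h\ge\sigma_0$. Collecting the bounds and the triangle inequality gives \eqref{eqq28}. The delicate points are keeping the rectangle-based transfer of the Robin identity honest and tracking the powers of $h$ in the strip integrals.
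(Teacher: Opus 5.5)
Your proposal is correct and follows essentially the same route as the paper. You split off $u_I=\Pi_h^k\widetilde u$, use the inf-sup bound \eqref{co2}, and bound $a_h(\widetilde u-u_I,v_h)$ by trace and interpolation estimates; you then treat the consistency residual by the divergence theorem on control volumes, the piecewise-constant strip bound $\|v_h\|_{0,S_h^+}\le Ch^{\alpha/2}\|v_h\|_{0,\Gamma_h}$, and a parametrization comparison in which the $O(h^\alpha)$ normal and arc-length discrepancy, measured against $|\gamma_{K,R}'|\sim h$, produces the $h^{\alpha-1}$ loss. Two points should be made explicit: the volume strip term also needs the narrow-strip bound $\|\nabla\cdot F\|_{0,S_h^+}\le Ch^{\alpha/2}\|F\|_{2,U_\Gamma}$, because the factor $h^{\alpha/2}$ from $v_h$ alone is not within $h^{\alpha-1}$ once $\alpha>2$, and the $H$-part contributes an $h^{\alpha-1}|H|$ term alongside the $h^{\alpha-1}|F|$ term.
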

\begin{proof}
Let $u_I=\Pi_h^k\widetilde u\in\mathcal U_h^k.$ Since $u_h$ satisfies
\begin{equation}\label{ex2}
a_h(u_h,v_h)
=
\ell_h(v_h),
\qquad
\forall v_h\in\mathcal V_h,
\end{equation}
where
\begin{align*}
\ell_h(v_h)=
\sum_{P\in\mathcal N}
v_{K_P^{*,h}}
\int_{K_P^{*,h}}
f_h\,{\rm d}x{\rm d}y
+
\sum_{P\in\mathcal N^b}
v_{K_P^{*,h}}
\int_{\Gamma_{P,h}}
g_h\,{\rm d}s,
\end{align*}
we have
\begin{align}
a_h(u_h-u_I,v_h)=
a_h(\widetilde u-u_I,v_h)
+
\mathcal G_h^R(\widetilde u,v_h),
\label{qq}
\end{align}
where geometric error
\begin{equation}\label{GR-def}
\mathcal G_h^R(\widetilde u,v_h)=
\ell_h(v_h)-a_h(\widetilde u,v_h).
\end{equation}
Recall that
\begin{equation}\label{coup}
a_h(\widetilde u-u_I,v_h)
=
a_{h,0}(\widetilde u-u_I,v_h)+r_h(\widetilde u-u_I,v_h).
\end{equation}
By the Cauchy--Schwarz inequality, the trace
inequality and the interpolation estimate \eqref{eq:interp_error}, we can obtain
\begin{align}
|a_{h,0}(\widetilde u-u_I,v_h)|
&\le
C
\Big(
\sum_{K^h\in\mathcal T_h}
\bigl(
|\widetilde u-u_I|_{1,K^h}^2
+
h_K^2|\widetilde u-u_I|_{2,K^h}^2
\bigr)
\Big)^{1/2}
|v_h|_{\mathcal T_h^*}\nonumber\\
&\leq C h^k
\|\widetilde u\|_{k+1,\Omega_h}
|v_h|_{\mathcal T_h^*}
\label{interp_flux_est2}
\end{align}
and
\begin{align}
|r_h(\widetilde u-u_I,v_h)|\le
C
\|\widetilde u-u_I\|_{0,\Gamma_h}
\|v_h\|_{0,\Gamma_h}\le
C h^k
\|\widetilde u\|_{k+1,\Omega_h}
\|v_h\|_{\mathcal T_h^*}.
\label{interp_robin_est}
\end{align}
Thus, we combine \eqref{coup}-\eqref{interp_robin_est} to give
\begin{equation}\label{eqq87}
|a_h(\widetilde u-u_I,v_h)|
\le
C h^k
\|\widetilde u\|_{k+1,\Omega_h}
\|v_h\|_{\mathcal T_h^*}
\le
C h^k
\|u\|_{k+1,\Omega}
\|v_h\|_{\mathcal T_h^*}.
\tag{87}
\end{equation}
It remains to estimate the geometric error
$\mathcal G_h^R$. For each $P\in\mathcal N^b$, set
\[
D_P^+:=K_P^{*,h}\setminus\Omega.
\]
By the definitions of $a_h(\widetilde u,v_h)$ and $\ell_h(v_h)$, we have
\begin{align}
\mathcal G_h^R(\widetilde u,v_h)
={}&
\sum_{P\in\mathcal N}
v_{K_P^{*,h}}
\Big(
\int_{K_P^{*,h}} f_h\,{\rm d}x{\rm d}y
+
\int_{\partial_0K_P^{*,h}}
F\cdot\bm n_P\,{\rm d}s
\Big)
\nonumber\\
&\quad
-
\sum_{P\in\mathcal N^b}
v_{K_P^{*,h}}
\int_{\Gamma_{P,h}}
H\,{\rm d}s .
\label{eq:GR-expand}
\end{align}
Applying the divergence theorem to $K_P^{*,h}$ gives
\begin{equation}
\label{eq:divergence-control-volume}
\int_{\partial_0K_P^{*,h}}
F\cdot\bm n_P\,{\rm d}s
=
\int_{K_P^{*,h}}
\nabla\cdot F\,{\rm d}x{\rm d}y
-
\int_{\Gamma_{P,h}}
F\cdot\bm n^h\,{\rm d}s,
\end{equation}
which together with \eqref{eq:GR-expand} yields
\begin{align}
\mathcal G_h^R(\widetilde u,v_h)
={}&
\sum_{P\in\mathcal N}
v_{K_P^{*,h}}
\int_{K_P^{*,h}}
\left(
f_h+\nabla\cdot F
\right)
\,{\rm d}x{\rm d}y
\nonumber\\
&\quad
-
\sum_{P\in\mathcal N^b}
v_{K_P^{*,h}}
\int_{\Gamma_{P,h}}
\left(
F\cdot\bm n^h+H
\right)
\,{\rm d}s .
\label{eq:GR-residual-identity}
\end{align}
Let $\Gamma_P:=\Psi^{-1}_{K}(\Gamma_{P,h})$.
 Since $f_h=0$ in $\Omega_h\setminus\Omega$, $\nabla\cdot F=-f$ in $\Omega$ and $F\cdot\bm n+H=0$ on $\Gamma$, we can obtain
\begin{align}
\mathcal G_h^R(\widetilde u,v_h)
={}&
\sum_{P\in\mathcal N^b}
v_{K_P^{*,h}}\bigg(
\int_{D_P^+}
\nabla\cdot F\,{\rm d}x{\rm d}y
\nonumber\\
&\qquad
-
\Big(
\int_{\Gamma_{P,h}}(
F\cdot\bm n^h+H
){\rm d}s
-
\int_{\Gamma_P}(
F\cdot\bm n+H){\rm d}s\Big)
\bigg).
\label{local-GR}
\end{align}
Denote 
\[
S_h^+
:=
\bigcup_{P\in\mathcal N^b}D_P^+
=
\Omega_h\setminus\Omega.
\]
According to Assumption \ref{ass_geo}, it holds that
\begin{equation*}
|D_P^+|\le Ch^{\alpha+1},
\qquad
|S_h^+|\le Ch^\alpha.
\end{equation*}
Moreover, by the standard narrow-strip estimate \cite{gyc25,lmw10,lsl24}, for every
$w\in H^1(U_\Gamma)$, one has
\begin{equation}\label{strip-estimate}
\|w\|_{0,S_h^+}
\le
Ch^{\alpha/2}
\|w\|_{1,U_\Gamma}.
\end{equation}
It follows from the Cauchy-Schwarz inequality that 
\begin{align}
\Big|
\sum_{P\in\mathcal N^b}
v_{K_P^{*,h}}
\int_{D_P^+}
\nabla\cdot F\,{\rm d}x{\rm d}y
\Big|
=
\Big|
\int_{S_h^+}
v_h\,\nabla\cdot F\,{\rm d}x{\rm d}y
\Big|
&\le
\|v_h\|_{0,S_h^+}
\|\nabla\cdot F\|_{0,S_h^+}.
\label{DP-est-1}
\end{align}
Since $|D_P^+|\le Ch^\alpha|\Gamma_{P,h}|$ and
$v_h$ is constant on each boundary control volume, we have
\begin{align}
\|v_h\|_{0,S_h^+}^2
&=
\sum_{P\in\mathcal N^b}
|v_{K_P^{*,h}}|^2|D_P^+|
\nonumber\\
&\le
Ch^\alpha
\sum_{P\in\mathcal N^b}
|\Gamma_{P,h}|
|v_{K_P^{*,h}}|^2
\nonumber\\
&\le
Ch^\alpha
\|v_h\|_{0,\Gamma_h}^2
\nonumber\\
&\le
Ch^\alpha
\|v_h\|_{\mathcal T_h^*}^2.
\label{vh}
\end{align}
Combining \eqref{strip-estimate}, \eqref{DP-est-1} and \eqref{vh} yields
\begin{equation}\label{DP-est}
\Big|
\sum_{P\in\mathcal N^b}
v_{K_P^{*,h}}
\int_{D_P^+}
\nabla\cdot F\,{\rm d}x{\rm d}y
\Big|
\le
Ch^\alpha
\|\nabla\cdot F\|_{1,U_\Gamma}
\|v_h\|_{\mathcal T_h^*}.
\end{equation}
A direct calculation gives
\begin{align}
&
\int_{\Gamma_{P,h}}
\left(
F\cdot\bm n^h+H
\right){\rm d}s
-
\int_{\Gamma_P}
\left(
F\cdot\bm n+H
\right){\rm d}s
\nonumber\\
&=
\int_{-1}^{1}
\left[
F(\gamma_{K,R}^h(\xi))
\cdot
N_{K,R}^h(\xi)
-
F(\gamma_{K,R}(\xi))
\cdot
N_{K,R}(\xi)
\right]
{\rm d}\xi
\nonumber\\
&\quad+
\int_{-1}^{1}
\left[
H(\gamma_{K,R}^h(\xi))
J_{K,R}^h(\xi)
-
H(\gamma_{K,R}(\xi))
J_{K,R}(\xi)
\right]
{\rm d}\xi
\nonumber\\
&\triangleq M_1+M_2,
\label{boundary-param}
\end{align}
where
$J_{K,R}=|\gamma_{K,R}'|$ and
$J_{K,R}^h=|(\gamma_{K,R}^h)'|$. For $M_1$, It follows from the Cauchy--Schwarz inequality, \eqref{geo-tangent} and
\eqref{strip-estimate} that one has
\begin{align}
\left|
\sum_{P\in\mathcal N^b}
v_{K_P^{*,h}}M_1
\right|
&=
\left|
\sum_{P\in\mathcal N^b}
v_{K_P^{*,h}}
\int_{-1}^{1}
\left[
F(\gamma_{K,R}^h)
-
F(\gamma_{K,R})
\right]\cdot N_{K,R}
\,{\rm d}\xi
\right.
\nonumber\\
&\qquad\left.
+
\sum_{P\in\mathcal N^b}
v_{K_P^{*,h}}
\int_{-1}^{1}
F(\gamma_{K,R}^h)
\cdot
\left(
N_{K,R}^h-N_{K,R}
\right)
\,{\rm d}\xi
\right|
\nonumber\\
&\le
C
\|v_h\|_{0,\Gamma_h}
\Big(
\sum_{(K,R)}
\int_{-1}^{1}
\left|
F(\gamma_{K,R}^h)
-
F(\gamma_{K,R})
\right|^2
N_{K,R}\,{\rm d}\xi
\Big)^{1/2}
\nonumber\\
&\quad+
C
\big\|
\frac{N_{K,R}^h-N_{K,R}}
{J_{K,R}^h}
\big\|_{\infty}
\|v_h\|_{0,\Gamma_h}
\|F\|_{0,\Gamma_h}
\nonumber\\
&\le
C h^\alpha
\|F\|_{2,U_\Gamma}
\|v_h\|_{0,\Gamma_h}
+
C h^{\alpha-1}
\|F\|_{1,U_\Gamma}
\|v_h\|_{0,\Gamma_h}
\nonumber\\
&\le
C
\left(
h^\alpha\|F\|_{2,U_\Gamma}
+
h^{\alpha-1}\|F\|_{1,U_\Gamma}
\right)
\|v_h\|_{\mathcal T_h^*}.
\label{M1-est}
\end{align}
Similarly, for $M_2$, we can obtain the following estimate 
\begin{align}
\left|
\sum_{P\in\mathcal N^b}
v_{K_P^{*,h}}M_2
\right|\le
C
\left(
h^\alpha\|H\|_{2,U_\Gamma}
+
h^{\alpha-1}\|H\|_{1,U_\Gamma}
\right)
\|v_h\|_{\mathcal T_h^*}.
\label{M2-est}
\end{align}
Combining \eqref{DP-est}, \eqref{M1-est} and
\eqref{M2-est} gives
\begin{align}
|\mathcal G_h^R|
&\le
C h^\alpha
\Big(
\|\nabla\cdot F\|_{1,U_\Gamma}
+
\|F\|_{2,U_\Gamma}
+
\|H\|_{2,U_\Gamma}
\Big)
\|v_h\|_{\mathcal T_h^*}
\nonumber\\
&\quad+
C h^{\alpha-1}
\Big(
\|F\|_{1,U_\Gamma}
+
\|H\|_{1,U_\Gamma}
\Big)
\|v_h\|_{\mathcal T_h^*}\nonumber\\
&\le
Ch^{\alpha-1}
\Big(
\|F\|_{2,U_\Gamma}
+
\|H\|_{2,U_\Gamma}
\Big)
\|v_h\|_{\mathcal T_h^*}
\label{qq2}
\end{align}
for sufficiently small $h$. Substituting \eqref{eqq87} and
\eqref{qq2} into \eqref{qq}, we have
\begin{align}
\left|
a_h(u_h-u_I,v_h)
\right|
&\le
C
\left(
h^k\|u\|_{k+1,\Omega}
+
h^{\alpha-1}\mathcal M_{\Gamma}
\right)
\|v_h\|_{\mathcal T_h^*},
\label{pre-stability}
\end{align}
which together with \eqref{co2} leads to
\begin{align*}
\|u_h-u_I\|_{1,\Omega_h}
\le
C
\sup_{0\ne v_h\in\mathcal V_h}
\frac{
a_h(u_h-u_I,v_h)
}{
\|v_h\|_{\mathcal T_h^*}
}
\leq
C\left(
h^k\|u\|_{k+1,\Omega}
+
h^{\alpha-1}\mathcal M_{\Gamma}
\right).
\end{align*}
Finally, we can prove \eqref{eqq28} by using the triangle inequality and the interpolation estimate.
\end{proof}

%

\subsection{$L^2$ error analysis}
Our \(L^2\) error estimate  is based on the orthogonality condition, which is defined as follows in one dimension; see also \cite{wll21,wz21}.

\begin{definition}\label{defin}
A finite volume scheme  or a piecewise linear mapping $\Pi^{k,*}_{h,x}$  on an interval $I$ is said to satisfy $k$-$(k-1)$-order orthogonal condition if 
\begin{equation}\label{l1}
\int_{I} g(w-\Pi^{k,*}_{h,x}w) {\rm d}x=0,\qquad \forall g\in P_{k-1}(I),~\forall w\in P_1(I),
\end{equation}
where $P_k$ denote the $k$-order polynomial space. 
\end{definition}

\begin{definition}
Assume that the orthogonality condition holds for a finite volume scheme or the mapping $\Pi^{k,*}_h =\Pi^{k,*}_{h,x}\circ\Pi^{k,*}_{h,y}$. Then both component mappings $\Pi^{k,*}_{h,x}$ and $\Pi^{k,*}_{h,y}$ satisfy \eqref{l1}.
\end{definition}

Here, we state the following useful lemma, which is available in \cite{wz21,zz14}.
\begin{lemma}\label{lemma5}
Let $\mathcal T_h$ be the regular curved-edge quadrilateral partition of $\Omega_h$. Assume that the finite volume solution $\widehat u_h$ on
$\widehat K$ satisfies the orthogonality condition. For any $u\in H^{k+2}(\Omega_h)$, $u_h\in\mathcal U_h^k$ and $w\in H^2(\Omega_h)$, there exists a positive constant $C$
independent of $h$, such that
\begin{equation}\label{ll1}
|F_1+F_2|
\le
Ch^{k+1}
\|u\|_{k+2,\Omega_h}
\|w\|_{2,\Omega_h},
\end{equation}
where
\begin{align*}
F_1
&=
\sum_{K^h\in\mathcal T_h}
\int_{\widehat K}
\widehat\nabla\cdot
\left(
\overline{\mathbb D}_K
\widehat\nabla
(\widehat u-\widehat u_h)
\right)
\left(
\Pi_h^{k,*}(\Pi_h^1w)-\Pi_h^1w
\right)
\,{\rm d}\xi\,{\rm d}\eta,
\\
F_2
&=
-\sum_{K^h\in\mathcal T_h}
\int_{\partial\widehat K}
\left(
\overline{\mathbb D}_K
\widehat\nabla
(\widehat u-\widehat u_h)
\right)
\cdot\widehat{\bm n}
\left(
\Pi_h^{k,*}(\Pi_h^1w)-\Pi_h^1w
\right)
\,{\rm d}\widehat s.
\end{align*}
Here $\widehat u=u|_{K^h}\circ\Psi_K$, $\widehat u_h=u_h|_{K^h}\circ\Psi_K$,
$\widehat{\bm n}$ denotes the unit outward normal vector on
$\partial\widehat K$, and  $\overline{\mathbb D}_K$ is a constant matrix.
\end{lemma}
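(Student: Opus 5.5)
The strategy is to work element by element on $\widehat K$ and to exploit two ingredients: the one-dimensional $k$-$(k-1)$ orthogonality \eqref{l1} of the transfer operator, and the fact that $\overline{\mathbb D}_K$ is constant. Write $e=\widehat u-\widehat u_h$ and $\widehat w_1=\Pi_h^1 w$ (bilinear on each $\widehat K$), and set $\delta=\Pi_h^{k,*}\widehat w_1-\widehat w_1$. Since $\Pi_h^{k,*}=\Pi^{k,*}_{h,x}\circ\Pi^{k,*}_{h,y}$, I would first split
\[
\delta=(\Pi^{k,*}_{h,x}-I)\widehat w_1+\Pi^{k,*}_{h,x}(\Pi^{k,*}_{h,y}-I)\widehat w_1 .
\]
This reduces every term to a product of a one-dimensional orthogonality defect with a tensor factor. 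The defect satisfies $\|\delta\|_{0,\widehat K}\le C|\widehat w_1|_{1,\widehat K}$, and, by \eqref{l1}, it annihilates $P_{k-1}$ in the relevant variable.

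Next I would insert the interpolant $\widehat u_I$ of degree $k+1$. On each element, $e=(\widehat u-\widehat u_I)+(\widehat u_I-\widehat u_h)$. The first piece is handled by Cauchy--Schwarz: the interpolation error gives $h^{k+1}$-type bounds after scaling (using $\|\overline{\mathbb D}_K\|\le C$ from \eqref{DKbar-positive-proof}), and the defect satisfies $\|\delta\|\le Ch|w|_{1}$ after scaling. For $F_1$ with the polynomial part $\widehat u_I-\widehat u_h\in\mathbb Q_{k+1}$, the function $\widehat\nabla\cdot(\overline{\mathbb D}_K\widehat\nabla\cdot)$ is a polynomial. I would split it into its component of degree at most $k-1$ in the transfer variable, which vanishes against $\delta$ by orthogonality, and its leading part. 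The leading part depends only on the top coefficients, which are controlled by $|\widehat u|_{k+2}$ plus the superclose difference. I would pair it with $\delta$ and integrate by parts once more to move a derivative onto $\widehat w_1$, gaining a factor of $h$ through $|w|_{2}$.

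For $F_2$, the key observation is that element-boundary terms telescope across interior edges. This holds because $\overline{\mathbb D}_K\widehat\nabla\widehat u$ of the exact solution is nearly continuous, and $\delta$ is continuous across edges in the transverse direction thanks to the Gauss-point construction ($q^*_0=q(-1)$, $q^*_k=q(1)$, so $\mathcal Q_k$ reproduces endpoint values). The remaining jump terms come from differences $\overline{\mathbb D}_K-\overline{\mathbb D}_{K'}$ between neighbors, which are $O(h)$ by \eqref{DK-osc}, and from the jump of $\widehat\nabla\widehat u_h$. I would estimate the latter by the trace inequality together with the $H^1$ error bound of Theorem~\ref{theorem4} and the defect size. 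Summing over elements with Cauchy--Schwarz and the usual scaling $|\widehat w|_{s,\widehat K}\sim h^{s-1}|w|_{s,K^h}$ then gives $Ch^{k+1}\|u\|_{k+2}\|w\|_2$.

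I expect the main obstacle to be exactly $F_2$ and the leading-coefficient part of $F_1$. Naive bounds give only $h^k\cdot h$ with the wrong norm, and the gain must come from cancellation rather than from size. My first attempt would be to combine $F_1+F_2$ before estimating: by the divergence theorem on each subrectangle $\widehat K^*_{i,j}$, the sum becomes a sum over dual edges of the flux of $e$ weighted by the defect. The orthogonality \eqref{l1}, applied along each dual edge, kills the $P_{k-1}$ part of the flux, and a Bramble--Hilbert argument in the remaining variable yields the extra power of $h$. This is the route of \cite{wz21,zz14}, and I would follow it, checking only that the constant matrix $\overline{\mathbb D}_K$ and the curved-element scaling (via Lemma~\ref{lem:geometric_coefficient_bounds}) do not break that argument.
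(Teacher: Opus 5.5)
The paper gives no proof of this lemma; it only cites \cite{wz21,zz14}, which treat straight-edge meshes. Your sketch therefore has to stand on its own, and as written it does not reach $h^{k+1}$. The cause is that you work only with the $P_{k-1}$-orthogonality \eqref{l1}, then try to win back the missing power of $h$ by telescoping and by a superclose bound. Three steps fail.

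(i) What is continuous across an interior edge is the pulled-back physical flux $\mathbb D_K\widehat\nabla\widehat u\cdot\widehat{\bm n}$, not $\overline{\mathbb D}_K\widehat\nabla\widehat u\cdot\widehat{\bm n}$. Moreover, \eqref{DK-osc} bounds $\mathbb D_K-\overline{\mathbb D}_K$ inside one element only. It says nothing about $\overline{\mathbb D}_K-\overline{\mathbb D}_{K'}$ for neighbours, and this difference is $O(1)$ for a square next to a $2{:}1$ rectangle, or when the reference axes of the two neighbours are swapped. Even an $O(h)$ mismatch, paired with $\delta$ (whose size on an edge is $O(|\widehat w_1|_{1,\widehat K})$, with no factor $h$), contributes $O(h)$ in total.

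(ii) Bounding the jump of $\widehat\nabla\widehat u_h$ by a trace inequality, \eqref{eqq28} and $\|\delta\|$ gives only $O(h^k)\|w\|_{2,\Omega_h}$. Orthogonality \eqref{l1} cannot remove this term, because the normal flux of $\widehat u_h$ on an edge has degree $k$, not $k-1$. Edges on $\Gamma_h$ have no partner to telescope with at all.

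(iii) In $F_1$, the top coefficients of $\widehat u_I-\widehat u_h$ are controlled by $|\widehat u|_{k+2,\widehat K}$ plus the $H^1$ distance from $u_h$ to its $\mathbb Q_k$ interpolant. That distance is only $O(h^k)$ here; the paper provides no superclose estimate. The surviving pairing is with the part of $\delta$ proportional to $\partial_\eta\widehat w_1$, a first derivative, so integrating by parts inside $\widehat K$ cannot produce $|w|_2$.

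The missing idea is that the Gauss transfer is orthogonal to far more than $P_{k-1}$. Put $T=\mathcal Q_kt$ and $P(t)=\int_{-1}^tp$. The computation in the proof of Lemma~\ref{lemma:gaussian-transfer}, with $q(t)=t$, gives $\int_{-1}^1p\,(T-t)\,{\rm d}t=\int_{-1}^1P\,{\rm d}t-\sum_iA_iP(g_i)$. This vanishes for every $p\in P_{2k-2}$.

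Write $\widehat w_1=a+b\xi+c\eta+d\xi\eta$. Formula \eqref{eq23} gives $\Pi_h^{k,*}\widehat w_1=a+bT(\xi)+cT(\eta)+dT(\xi)T(\eta)$. Hence $\delta=b(T(\xi)-\xi)+c(T(\eta)-\eta)+d(T(\xi)T(\eta)-\xi\eta)$, and on the edges $\eta=\pm1$ simply $\delta=(b\pm d)(T(\xi)-\xi)$; the edges $\xi=\pm1$ are analogous.

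Consequently, for $k\ge2$, both the volume part and the edge part of each single element vanish when $\widehat u-\widehat u_h$ is replaced by any $v\in\mathbb Q_k(\widehat K)+P_{k+1}(\widehat K)$. Indeed, $\widehat\nabla\cdot(\overline{\mathbb D}_K\widehat\nabla v)$ has degree at most $k\le 2k-2$ in each variable, and the normal flux of $v$ on an edge has degree at most $k$. So $\widehat u_h$ drops out identically. For each element and any $p\in P_{k+1}$, a trace inequality and Bramble--Hilbert then give
$|F_1^K+F_2^K|\le C\|\widehat u-p\|_{2,\widehat K}|\widehat w_1|_{1,\widehat K}\le C|\widehat u|_{k+2,\widehat K}|\widehat w_1|_{1,\widehat K}\le Ch^{k+1}\|u\|_{k+2,K^h}\|w\|_{2,K^h}$
after scaling. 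This step needs $\|D^m\Psi_K\|_{\infty,\widehat K}\le Ch^m$ up to $m=k+2$. No telescoping, no comparison of neighbouring $\overline{\mathbb D}_K$, and no $H^1$ estimate are required.

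Equivalently, integrating by parts on each $\widehat K^*_{i,j}$ yields $F_1^K+F_2^K=\int_{\widehat K}\overline{\mathbb D}_K\widehat\nabla e\cdot\widehat\nabla\widehat w_1-a_{K^h,\overline{\mathbb D}_K}(e,\Pi_h^{k,*}\widehat w_1)$. This expression vanishes on $\mathbb Q_k$ by the computation in Step~2 of Theorem~\ref{lemma4}, since $B_k(p,q)=\int pq$ whenever $q\in P_1$ and $k\ge2$.

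For $k=1$ the elementwise terms no longer vanish on $\mathbb Q_1+P_2$, but everything that survives is a multiple of $\partial_{\xi\eta}\widehat w_1=O(h\|w\|_{2,K^h})$. For example, the $\widehat u_h$ part contributes $\tfrac23(\overline d_{11}+\overline d_{22})\,\partial_{\xi\eta}\widehat u_h\,\partial_{\xi\eta}\widehat w_1$. Only at this point must $u_h$ be the discrete solution, with \eqref{eqq28} closing the bound. For arbitrary $u_h$ the case $k=1$ is actually false: take $u=0$ and $u_h=w=xy$ on a uniform square mesh with constant $\kappa$; then $F_1+F_2=\kappa h^2|\Omega_h|/12$, while the right-hand side is zero.
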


We consider the dual Robin problem: find $z\in H^2(\Omega_h)$ such that
\begin{equation}\label{dual-Robin}
A_h^R(w,z)
=
(\widetilde u-u_h,w)_{\Omega_h},
\qquad
\forall w\in H^1(\Omega_h),
\end{equation}
where
\[
A_h^R(w,z)
=
\int_{\Omega_h}
\kappa_h\nabla w\cdot\nabla z\,{\rm d}x{\rm d}y
+
\int_{\Gamma_h}
\sigma_hwz\,{\rm d}s.
\]
We have the following regularity estimate 
\begin{equation}\label{dual-regularity}
\|z\|_{2,\Omega_h}
\le
C\|\widetilde u-u_h\|_{0,\Omega_h}.
\end{equation}

\begin{theorem}\label{thm:L2-Robin}
Assume that the hypotheses of Theorem \ref{theorem4} hold. Let $u\in H^{k+2}(\Omega)$ and $u_h\in\mathcal U_h^k$ solve 
\eqref{model} and \eqref{scheme} respectively, and suppose $\Gamma$ is  piecewise $C^2$ smooth. Then we have
\begin{equation}\label{L2-Robin-final}
\|\widetilde u-u_h\|_{0,\Omega_h}
\le
C\left(
h^{k+1}\|u\|_{k+2,\Omega}
+
h^\alpha\mathcal M_\Gamma
\right).
\end{equation}
\end{theorem}
\begin{proof}
Denote $e_h=\widetilde u-u_h$. For the solution $z$ of dual Robin problem \eqref{dual-Robin}, Let $\widetilde{z}\in H^2(\widetilde{\Omega})$ be a uniformly bounded
extension of $z$. Define $z_I$ element by element as $z_I=\Pi_h^1\widetilde{z}$, that is,
\[
    z_I|_{K^h}\circ\Psi_K
    =\widehat\Pi_1(\widetilde z\circ\Psi_K),
    \qquad K^h\in\mathcal T_h,
\]
where $\widehat\Pi_1$ is the
bibilinear Lagrangian interpolation operator on the reference element $\widehat K$. By the boundary compatibility
condition, $z_I$ is globally continuous on $\overline{\Omega_h}$. Moreover, 
since $Q_1(\widehat K)\subset Q_k(\widehat K)$, we have
$z_I\in\mathcal U_h^k$. Set $z_h^*=\Pi_h^{k,*}z_I\in\mathcal V_h$. Taking $w=e_h$ in \eqref{dual-Robin} gives
\begin{align}
\|e_h\|_{0,\Omega_h}^2=
A_h^R(e_h,z)=
A_h^R(e_h,z-z_I)
+
A_h^R(e_h,z_I).
\label{L2-main-split}
\end{align}
A direct estimation yields
\begin{align}
|A_h^R(e_h,z-z_I)|
\le
C
\|e_h\|_{1,\Omega_h}
\|z-z_I\|_{1,\Omega_h}
\le
Ch
\|e_h\|_{1,\Omega_h}
\|z\|_{2,\Omega_h},
\label{L2-first-1}
\end{align}
which together with \eqref{eqq28} implies
\begin{align}
|A_h^R(e_h,z-z_I)|
&\le
C\left(
h^{k+1}\|u\|_{k+1,\Omega}
+
h^\alpha\mathcal M_\Gamma
\right)
\|z\|_{2,\Omega_h}.
\label{L2-first-2}
\end{align}
We decompose $A_h^R(e_h,z_I)$ as
\begin{align}
A_h^R(e_h,z_I)=
\mathcal C_h(e_h,z_I)
+
\mathcal G_h^R(\widetilde u,z_h^*),
\label{L2-second-split}
\end{align}
where
\begin{align}
\mathcal C_h(e_h,z_I)
&=
A_h^R(e_h,z_I)
-
a_h(e_h,z_h^*),
\label{Ch-def}
\\
\mathcal G_h^R(\widetilde u,z_h^*)
&=
a_h(\widetilde u,z_h^*)
-
\ell_h(z_h^*).
\label{GR-L2-def}
\end{align}
We first bound $\mathcal C_h(e_h,z_I)$. By the definition of $a_{h,0}$, we have
\begin{align}
\mathcal C_h(e_h,z_I)=&
\int_{\Omega_h}
\kappa_h\nabla e_h\cdot\nabla z_I\,{\rm d}x{\rm d}y
-a_{h,0}(e_h,z_h^*)\nonumber\\
&+
\int_{\Gamma_h}
\sigma_h e_hz_I\,{\rm d}s
-r_h(e_h,z_h^*).\label{ch}
\end{align}
Integration by parts gives
\begin{align}
&\int_{\Omega_h}
\kappa_h\nabla e_h\cdot\nabla z_I\,{\rm d}x{\rm d}y-a_{h,0}(e_h,z^*_h)\nonumber\\
=&\int_{\Omega_h}
\kappa_h\nabla e_h\cdot\nabla z_I\,{\rm d}x{\rm d}y+
\sum_{P\in\mathcal N}
\int_{\partial_0K_P^{*,h}}
\kappa_h(x,y)\frac{\partial e_h}{\partial\bm n}z^*_h
\,{\rm d}s\nonumber\\
=&\sum_{K^h\in\mathcal T_h}\int_{K^h}\nabla\cdot(\kappa_h(x,y)\nabla e_h)(z^*_h-z_I){\rm d}x{\rm d}y\nonumber\\
&\quad-\sum_{K^h\in\mathcal T_h}\int_{\partial K^h}\kappa_h(x,y)\frac{\partial e_h}{\partial\bm{n}}(z^*_h-z_I){\rm d}s\nonumber\\
\nonumber=&\sum_{K^h\in\mathcal T_h}\int_{\widehat{K}}\mathbb{J}^{-1}_K\widehat{\nabla}\cdot\big(\widehat \kappa_K\mathbb{J}^{-1}_K\widehat{\nabla}(\widehat e_h)\big)(z^*_h-z_I)J_K {\rm d}\xi {\rm d}\eta\nonumber\\
\nonumber&\quad -\sum_{K^h\in\mathcal T_h}\int_{\partial \widehat{K}}\widehat \kappa_K\mathbb{J}^{-1}_K\widehat{\bm n}\cdot\big(\mathbb{J}^{-1}_K\widehat{\nabla}(\widehat e_h)\big)(z^*_h-z_I)J_K{\rm d}\widehat{s}\\
\triangleq& E_1+E_2.\label{l3}
\end{align}
We further decompose $E_1$ and $E_2$ as
\[
E_1
=
E_1^1+E_1^2+E_1^3+E_1^4,
\qquad
E_2
=
E_2^1+E_2^2,
\]
where
\begin{align*}
E^1_1
&=
\sum_{K\in\mathcal T_h}
\int_{\widehat{K}}
(\mathbb{J}^{-1}_K-\overline{\mathbb{J}^{-1}_K})
\widehat{\nabla}\cdot
\bigl(
\widehat \kappa_K
\mathbb{J}^{-1}_K
\widehat{\nabla}(\widehat e_h)
\bigr)
(z^*_h-z_I)
J_K
\,{\rm d}\xi{\rm d}\eta,
\\
E^2_1
&=
\sum_{K\in\mathcal T_h}
\int_{\widehat{K}}
\overline{\mathbb{J}^{-1}_K}
\widehat{\nabla}\cdot
\bigl(
\widehat \kappa_K
\mathbb{J}^{-1}_K
\widehat{\nabla}(\widehat e_h)
\bigr)
(z^*_h-z_I)
(J_K-\bar{J}_K)
\,{\rm d}\xi{\rm d}\eta,
\\
E^3_1
&=
\sum_{K\in\mathcal T_h}
\int_{\widehat{K}}
\overline{\mathbb{J}^{-1}_K}
\widehat{\nabla}\cdot
\bigl(
\widehat \kappa_K
(\mathbb{J}^{-1}_K-\overline{\mathbb{J}^{-1}_K})
\widehat{\nabla}(\widehat e_h)
\bigr)
(z^*_h-z_I)
\bar{J}_K
\,{\rm d}\xi{\rm d}\eta,
\\
E^4_1
&=
\sum_{K\in\mathcal T_h}
\int_{\widehat{K}}
\widehat{\nabla}\cdot
\bigl(
\overline{\mathbb{D}}_K
\widehat{\nabla}(\widehat e_h)
\bigr)
(z^*_h-z_I)
\,{\rm d}\xi{\rm d}\eta,
\\
E^1_2
&=
-\sum_{K\in\mathcal T_h}
\int_{\partial\widehat{K}}
\bigl(
(\mathbb{D}_K-\overline{\mathbb{D}}_K)
\widehat{\nabla}(\widehat e_h)
\bigr)
\cdot\widehat{\bm n}
(z^*_h-z_I)
\,{\rm d}\widehat{s},
\\
E^2_2
&=
-\sum_{K\in\mathcal T_h}
\int_{\partial\widehat{K}}
\bigl(
\overline{\mathbb{D}}_K
\widehat{\nabla}(\widehat e_h)
\bigr)
\cdot\widehat{\bm n}
(z^*_h-z_I)
\,{\rm d}\widehat{s},
\end{align*}
with $\mathbb D_K=\mathbb J_K^{-T}\widehat \kappa_K\mathbb J_K^{-1}J_K$ and 
$\overline{\mathbb D}_K=\overline{\mathbb J_K^{-T}\widehat \kappa_K\mathbb J_K^{-1}J_K}$.
Using trace inequalities, inverse estimates, the $H^1$ error bound \eqref{eqq28}, and Lemma~\ref{lem:geometric_coefficient_bounds}, we can obtain 
\begin{align}
|E^1_1|
&\le
Ch^{k+1}
\|u\|_{k+1,\Omega}
\|z\|_{2,\Omega_h},
\label{l4}
\\
|E^2_1|
&\le
Ch^{k+1}
\|u\|_{k+1,\Omega}
\|z\|_{2,\Omega_h},
\label{l5}
\\
|E^3_1|
&\le
Ch^{k+1}
\|u\|_{k+1,\Omega}
\|z\|_{2,\Omega_h},
\label{l6}
\\
|E^1_2|
&\le
Ch^{k+1}
\|u\|_{k+1,\Omega}
\|z\|_{2,\Omega_h}.
\label{l7}
\end{align}
It follows from Lemma~\ref{lemma5} that one has
\begin{equation}
\label{l8}
|E^4_1+E^2_2|
\le
Ch^{k+1}
\|u\|_{k+2,\Omega}
\|z\|_{2,\Omega_h}.
\end{equation}
Combining \eqref{l4}--\eqref{l8} yields
\begin{align}
\left|
\int_{\Omega_h}
\kappa_h\nabla e_h\cdot\nabla z_I\,{\rm d}x{\rm d}y
-
a_{h,0}(e_h,z_h^*)
\right|\le
Ch^{k+1}
\|u\|_{k+2,\Omega}
\|z\|_{2,\Omega_h}.
\label{interp}
\end{align}
For the boundary term, we directly get
\begin{align}
\Big|
\int_{\Gamma_h}
\sigma_he_hz_I\,{\rm d}s
-r_h(e_h,z_h^*)
\Big|&=
\Big|
\int_{\Gamma_h}
\sigma_he_hz_I\,{\rm d}s
-
\sum_{P\in\mathcal N^b}
z_{K_P^{*,h}}^*
\int_{\Gamma_{P,h}}
\sigma_he_h\,{\rm d}s
\Big|
\nonumber\\
&\leq
\int_{\Gamma_h}\left|
\sigma_he_h
(z_I-z_h^*)\right|
\,{\rm d}s
\nonumber\\
&\le
C
\|e_h\|_{0,\Gamma_h}
\|z_I-z_h^*\|_{0,\Gamma_h}\nonumber\\
&\leq Ch
\|e_h\|_{1,\Omega_h}
\|z\|_{2,\Omega_h},
\label{Robin-consistency-1}
\end{align}
which together with \eqref{eqq28}, \eqref{ch} and \eqref{interp} implies
\begin{align}
|\mathcal C_h(e_h,z_I)|
&\le
C
\left(
h^{k+1}\|u\|_{k+2,\Omega}
+
h^\alpha\mathcal M_\Gamma
\right)
\|z\|_{2,\Omega_h}.
\label{consistency-L2-final}
\end{align}

We next turn to the geometric residual $\mathcal G_h^R(\widetilde u,z_h^*)$.
Recall that $F=\widetilde\kappa\nabla\widetilde u$ and $H=\widetilde\sigma\,\widetilde u-\widetilde g$. It follows from the definitions of $a_h$ and $\ell_h$ that
\begin{align}
\mathcal G_h^R(\widetilde u,z_h^*)
={}&
-\sum_{P\in\mathcal N}
z_{K_P^{*,h}}^*
\int_{\partial_0K_P^{*,h}}
F\cdot\bm n_P\,{\rm d}s
-
\sum_{P\in\mathcal N}
z_{K_P^{*,h}}^*
\int_{K_P^{*,h}}
f_h\,{\rm d}x{\rm d}y
\nonumber\\
&+
\sum_{P\in\mathcal N^b}
z_{K_P^{*,h}}^*
\int_{\Gamma_{P,h}}
H\,{\rm d}s,\nonumber\\
={}&
\sum_{P\in\mathcal N^b}
z_{K_P^{*,h}}^*
\Big(
\int_{\Gamma_{P,h}}
(F\cdot\bm n^h+H)\,{\rm d}s
-
\int_{D_P^+}
\nabla\cdot F\,{\rm d}x{\rm d}y
\Big),\label{geo-residual-exact}
\end{align}
where $z_{K_P^{*,h}}^*=z^*_h|_{K_P^{*,h}}$.
By the definition of $S_h^+$, we can obtain
\begin{align}
\mathcal G_h^R(\widetilde u,z_h^*)
={}&
\Big(
\int_{\Gamma_h}
\widetilde z
(F\cdot\bm n^h+H)\,{\rm d}s
-
\int_{S_h^+}
\widetilde z\,\nabla\cdot F\,{\rm d}x{\rm d}y
\Big)
\nonumber\\
&+
\int_{\Gamma_h}
(z_h^*-\widetilde z)
(F\cdot\bm n^h+H)\,{\rm d}s-
\int_{S_h^+}
(z_h^*-\widetilde z)
\nabla\cdot F\,{\rm d}x{\rm d}y
\nonumber\\
\triangleq&
G_1+G_2+G_3.
\label{geo-three-parts}
\end{align}
For $G_1$,  we can write
\begin{align}
G_1
={}&
\Big(
\int_{\Gamma_h}
\widetilde zF\cdot\bm n^h\,{\rm d}s
-
\int_{\Gamma}
\widetilde zF\cdot\bm n\,{\rm d}s
-
\int_{S_h^+}
\widetilde z\,\nabla\cdot F\,{\rm d}x{\rm d}y
\Big)
\nonumber\\
&+
\Big(
\int_{\Gamma_h}
\widetilde zH\,{\rm d}s
-
\int_{\Gamma}
\widetilde zH\,{\rm d}s
\Big)
\nonumber\\
\triangleq&
G_{1,F}+G_{1,H}.
\label{G1-split}
\end{align}
Let $S_h^-=\Omega\setminus\overline{\Omega_h}$ and $S_h=S_h^+\cup S_h^-$. Thus we have (see also \cite{gyc25,lmw10,lsl24})
\begin{equation}\label{strip}
\|w\|_{0,S_h}\leq Ch^{\alpha/2}\|w\|_{1,U_{\Gamma}},\qquad  \forall w\in H^1(U_{\Gamma}).
\end{equation}
Applying the divergence theorem to $\widetilde zF$ on $\Omega_h$ and $\Omega$ respectively gives
\begin{align}
&
\int_{\Gamma_h}
\widetilde zF\cdot\bm n^h\,{\rm d}s
-
\int_{\Gamma}
\widetilde zF\cdot\bm n\,{\rm d}s
\nonumber\\
=&
\int_{S_h^+}
\nabla\cdot(\widetilde zF)\,{\rm d}x{\rm d}y
-
\int_{S_h^-}
\nabla\cdot(\widetilde zF)\,{\rm d}x{\rm d}y\nonumber\\
=&\int_{S_h^+}\widetilde z\,\nabla\cdot F{\rm d}x{\rm d}y+\int_{S_h^+}F\cdot\nabla\widetilde z{\rm d}x{\rm d}y-\int_{S_h^-}
\nabla\cdot(\widetilde zF)\,{\rm d}x{\rm d}y.\label{div}
\end{align}
It follows from \eqref{strip-estimate}, \eqref{strip} and \eqref{div} that one has
\begin{align}
|G_{1,F}|
&=
\Big|\int_{S_h^+}
F\cdot\nabla\widetilde z\,{\rm d}x{\rm d}y-
\int_{S_h^-}
\left(
\widetilde z\,\nabla\cdot F
+
F\cdot\nabla\widetilde z
\right)
\,{\rm d}x{\rm d}y\Big|\nonumber\\
&\le
\|F\|_{0,S_h^+}
\|\nabla\widetilde z\|_{0,S_h^+}
+
\|\widetilde z\|_{0,S_h^-}
\|\nabla\cdot F\|_{0,S_h^-}+\|F\|_{0,S_h^-}
\|\nabla\widetilde z\|_{0,S_h^-}
\nonumber\\
&\le
Ch^\alpha
\left(
\|F\|_{1,U_\Gamma}
+
\|\nabla\cdot F\|_{1,U_\Gamma}
\right)
\|\widetilde z\|_{2,\widetilde\Omega}
\nonumber\\
&\le
Ch^\alpha
\|F\|_{2,U_\Gamma}
\|z\|_{2,\Omega_h}.
\label{G1F-identity}
\end{align}
To estimate $G_{1,H}$, on each smooth segment of $\Gamma$, let $\bm\nu: U_\Gamma\rightarrow \mathbb R^2$ such that $|\bm\nu| = 1$ and $\bm\nu = \bm n$ on $\Gamma$.
We decompose $G_{1,H}$ as
\begin{align}
G_{1,H}
={}&\int_{\Gamma_h}
\widetilde zH
\bm\nu\cdot\bm n^h\,{\rm d}s
-
\int_{\Gamma}
\widetilde zH
\bm\nu\cdot\bm n\,{\rm d}s+
\int_{\Gamma_h}
\widetilde zH
\left(
1-\bm\nu\cdot\bm n^h
\right)
\,{\rm d}s.
\label{G1H-split}
\end{align}
Applying the divergence theorem to  $\widetilde zH\bm\nu$ yields
\begin{align}
&
\int_{\Gamma_h}
\widetilde zH
\bm\nu\cdot\bm n^h\,{\rm d}s
-
\int_{\Gamma}
\widetilde zH
\bm\nu\cdot\bm n\,{\rm d}s
\nonumber\\
=&
\int_{S_h^+}
\nabla\cdot(\widetilde zH\bm\nu)\,{\rm d}x{\rm d}y
-
\int_{S_h^-}
\nabla\cdot(\widetilde zH\bm\nu)\,{\rm d}x{\rm d}y.
\end{align}
Since
\[
\nabla\cdot(\widetilde zH\bm\nu)
=
H\bm\nu\cdot\nabla\widetilde z
+
\widetilde z\bm\nu\cdot\nabla H
+
\widetilde zH\nabla\cdot\bm\nu,
\]
the narrow-strip estimate \eqref{strip} implies
\begin{align}
\Big|
\int_{S_h^+}
\nabla\cdot(\widetilde zH\bm\nu)\,{\rm d}x{\rm d}y
-
\int_{S_h^-}
\nabla\cdot(\widetilde zH\bm\nu)\,{\rm d}x{\rm d}y
\Big|\le
Ch^\alpha
\|H\|_{2,U_\Gamma}
\|z\|_{2,\Omega_h}.
\label{G1H-volume}
\end{align}
In fact, we have
\[
1-\bm\nu\cdot\bm n^h
=
\frac12
|\bm\nu-\bm n^h|^2,
\]
which leads to
\[
\|1-\bm\nu\cdot\bm n^h\|_{\infty,\Gamma_h}
\le
Ch^{2\alpha-2}.
\]
Therefore, by the trace theorem, we get
\begin{align}
\left|
\int_{\Gamma_h}
\widetilde zH
(1-\bm\nu\cdot\bm n^h)\,{\rm d}s
\right|
&\le
Ch^{2\alpha-2}
\|H\|_{1,U_\Gamma}
\|z\|_{2,\Omega_h}
\nonumber\\
&\le
Ch^\alpha
\|H\|_{1,U_\Gamma}
\|z\|_{2,\Omega_h},\label{G1H_b}
\end{align}
where we have used the fact that $\alpha\geq 2$.
Combining \eqref{G1H-volume} with \eqref{G1H_b} yields
\begin{equation}\label{G1H-est}
|G_{1,H}|
\le
Ch^\alpha
\|H\|_{2,U_\Gamma}
\|z\|_{2,\Omega_h},
\end{equation}
which together with \eqref{G1F-identity} yields
\begin{equation}\label{G1-final}
|G_1|
\le
Ch^\alpha(\|F\|_{2,U_\Gamma}
+\|H\|_{2,U_\Gamma})
\|z\|_{2,\Omega_h}.
\end{equation}

We now estimate $G_2$. Using the Cauchy-Schwarz inequality gives
\begin{equation}\label{G2}
|G_2|\le
\|z_h^*-\widetilde z\|_{0,\Gamma_h}
\|Q_h\|_{0,\Gamma_h},
\end{equation}
where $Q_h = F \cdot \bm{n}^h + H$.  For any $x_h \in \Gamma_h$,  we have
\begin{align}
Q_h(x_h)
={}&
\bigl(
F(x_h)-F(x)
\bigr)
\cdot\bm n^h(x_h)
\nonumber\\
&+
F(x)\cdot
\bigl[
\bm n^h(x_h)-\bm n(x)
\bigr]
+
H(x_h)-H(x),
\label{Qh-decomp}
\end{align}
where $x = \Psi^{-1}_K(x_h)\in \Gamma$.
It follows from Assumption~\ref{ass_geo} that 
\[
\|F(x_h)-F(x)\|_{0,\Gamma_h}
+
\|H(x_h)-H(x)\|_{0,\Gamma_h}
\le
Ch^\alpha
\mathcal M_\Gamma,
\]
which leads to
\begin{align}
\|Q_h\|_{0,\Gamma_h}
&\le
Ch^\alpha\mathcal M_\Gamma
+
Ch^{\alpha-1}\|F\|_{1,U_\Gamma}\le
Ch^{\alpha-1}
\mathcal M_\Gamma.
\label{Qh-est}
\end{align}
Furthermore,
\begin{align}
\|z_h^*-\widetilde z\|_{0,\Gamma_h}
\le
\|z_h^*-z_I\|_{0,\Gamma_h}
+
\|z_I-\widetilde z\|_{0,\Gamma_h}\le
Ch
\|z\|_{2,\Omega_h}.
\label{zh-z-est}
\end{align}
Thus, combining \eqref{G2}, \eqref{Qh-est} and \eqref{zh-z-est} yileds
\begin{align}
|G_2|
&\le
\|z_h^*-\widetilde z\|_{0,\Gamma_h}
\|Q_h\|_{0,\Gamma_h}\le
Ch^\alpha
\mathcal M_\Gamma
\|z\|_{2,\Omega_h}.
\label{G2-est}
\end{align}

Finally, we bound $G_3$. It follows from the Cauchy-Schwarz inequality that we have
\begin{equation}\label{G3}
|G_3|\le
\|z_h^*-\widetilde z\|_{0,S_h^+}
\|\nabla\cdot F\|_{0,S_h^+}.
\end{equation}
Since $z_h^*$ is piecewise constant on each control volume $K_P^{*,h}$ and 
$|D_P^+|\le Ch^\alpha|\Gamma_{P,h}|$, we have
\begin{align}
\|z_h^*\|_{0,S_h^+}^2=
\sum_{P\in\mathcal N^b}
|z_{K_P^{*,h}}^*|^2
|D_P^+|\le
Ch^\alpha
\|z\|_{2,\Omega_h}^2.
\label{zh-strip-1}
\end{align}
Combining \eqref{strip-estimate} and \eqref{zh-strip-1} gives
\begin{align}
\|z_h^*-\widetilde z\|_{0,S_h^+}\le
\|z_h^*\|_{0,S_h^+}
+
\|\widetilde z\|_{0,S_h^+}\le
Ch^{\alpha/2}
\|z\|_{2,\Omega_h}.
\label{zh-z-strip}
\end{align}
The narrow-strip estimate \eqref{strip-estimate} further implies
\[
\|\nabla\cdot F\|_{0,S_h^+}
\le
Ch^{\alpha/2}
\|\nabla\cdot F\|_{1,U_\Gamma}
\le
Ch^{\alpha/2}
\|F\|_{2,U_\Gamma},
\]
which together with \eqref{zh-z-strip} yields
\begin{align}
|G_3|\le
\|z_h^*-\widetilde z\|_{0,S_h^+}
\|\nabla\cdot F\|_{0,S_h^+}\le
Ch^\alpha
\mathcal M_\Gamma
\|z\|_{2,\Omega_h}.
\label{G3-est}
\end{align}
Combining \eqref{G1-final}, \eqref{G2-est}, and \eqref{G3-est}, we have 
\begin{equation}\label{geo-L2-final}
\left|
\mathcal G_h^R(\widetilde u,z_h^*)
\right|
\le
Ch^\alpha
\mathcal M_\Gamma
\|z\|_{2,\Omega_h}.
\end{equation}
Finally, it follows from \eqref{L2-main-split}, \eqref{L2-first-2}, \eqref{L2-second-split}, \eqref{consistency-L2-final}, and \eqref{geo-L2-final} that we obtain
\begin{align}
\|e_h\|_{0,\Omega_h}^2
&\le
C
\left(
h^{k+1}\|u\|_{k+2,\Omega}
+
h^\alpha\mathcal M_\Gamma
\right)
\|z\|_{2,\Omega_h}
\nonumber\\
&\le
C
\left(
h^{k+1}\|u\|_{k+2,\Omega}
+
h^\alpha\mathcal M_\Gamma
\right)
\|e_h\|_{0,\Omega_h}.
\label{L2-final-step}
\end{align}
Thus we complete the proof.
\end{proof}

\begin{remark}
This paper focuses solely on the Robin boundary value problem; research regarding the Dirichlet and Neumann boundary value problems will be addressed in future work.
\end{remark}

\section{Numerical results}

To validate the theoretical results, we present three sets of numerical experiments on curved-edge quadrilateral meshes. The first set of experiments demonstrates that, regardless of whether the diffusion coefficient is constant or anisotropic matrix, the proposed method achieves optimal convergence order on various curved-edge meshes. The second set of experiments demonstrates that constructing dual meshes based on Gaussian points is crucial for achieving high accuracy and optimal convergence, whereas dual meshes constructed using equidistant points lead to a significant decline in accuracy. The third set of experiments examines two severely deformed domains and an interface problem, with the results further validating the significant advantages of curved-edge meshes in approximating curved-edge domains and capturing highly curved interfaces.

\begin{example}
We consider the second-order elliptic
Robin boundary value problem \eqref{model} on the domain
$\Omega=[-1,1]^2$. The exact solution and Robin coefficients are
chosen as 
\begin{equation*}
u(x,y)=2+\sin(\pi x)\sin(\pi y),\
\sigma=2, \ \kappa=1 \ {\rm or} \ \kappa=A=
\begin{pmatrix}
10 & 2\\
2 & 1
\end{pmatrix}.
\end{equation*}
The source term
$f$ and the Robin boundary data $g$ are determined by the exact
solution. We employ two distinct parametric mappings  
\begin{align}
&\Psi_1=\left\{
\begin{aligned}
&x(\xi,\eta)=\xi+0.5\eta(1-\xi^2)^2(1-\eta^2),\\
&y(\xi,\eta)=\eta-0.5\xi(1-\xi^2)(1-\eta^2)^2,
\end{aligned}
\right. \label{map1}\\
&\Psi_2=\left\{
\begin{aligned}
&x(\xi,\eta)=\xi+0.1\cos(\frac{\pi\xi}{2})\cos(\frac{3\pi \eta}{2}),\\
&y(\xi,\eta)=\eta+0.1\sin(2\pi\xi)\cos(\frac{\pi\eta}{2}),
\end{aligned}
\right. \label{map2}
\end{align}
to generate the curved-edge meshes; see also \cite{ca22,ps08}. These meshes are showed in Figure \ref{fig:a1}. 

For each mesh, we compute the errors \( \|u-u_h\|_{1,\Omega_h} \) and \( \|u-u_h\|_{0,\Omega_h} \) under different choices of the diffusion coefficient. From Figures~\ref{fig2}--\ref{fig3}, we observe that the \(H^1\)-error \( \|u-u_h\|_{1,\Omega_h} \) exhibits the expected convergence rate \(\mathcal O(h^k)\), where \(k\) is the polynomial degree of the trial space. Similarly, the \(L^2\)-error \( \|u-u_h\|_{0,\Omega_h} \) converges with order \(k+1\). These results confirm that the proposed scheme remains stable and accurate for both constant and anisotropic coefficients. It is worth noting that this two mappings \(\Psi_1\) and \(\Psi_2\) used in this test do not necessarily satisfy the strengthened asymptotic affine assumption imposed in the theoretical analysis.  Nevertheless, the numerical results still show optimal convergence. This suggests that the imposed mapping assumption is a sufficient condition for the proof, but it is not necessary in practice.

\begin{figure}[htbp]  
    \centering  
    \begin{subfigure}{0.48\textwidth}
        \centering
        \includegraphics[width=\textwidth]{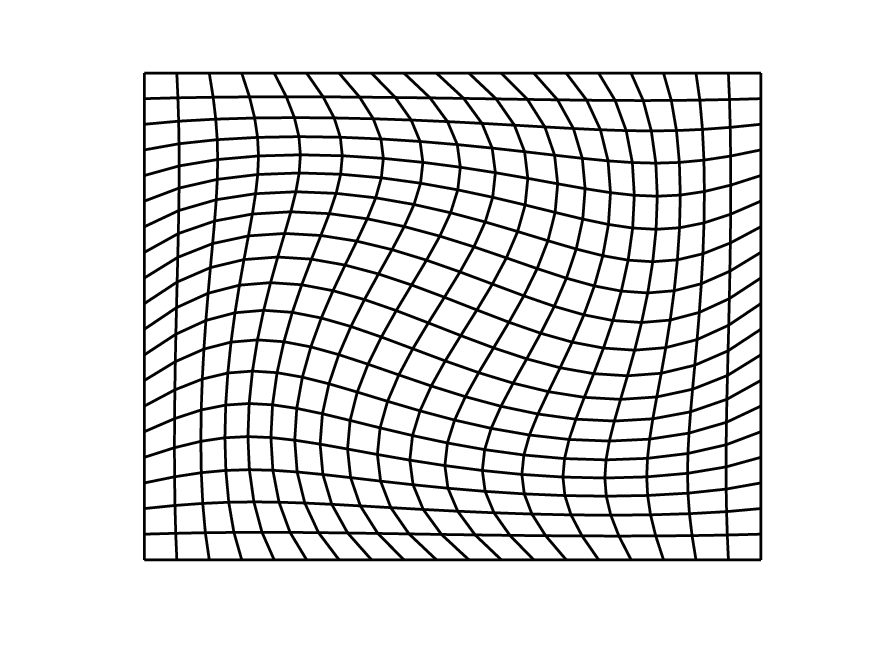}
        \caption{Mesh generated by $\Psi_1$}
    \end{subfigure}
    \hfill
    \begin{subfigure}{0.48\textwidth}
        \centering
        \includegraphics[width=\textwidth]{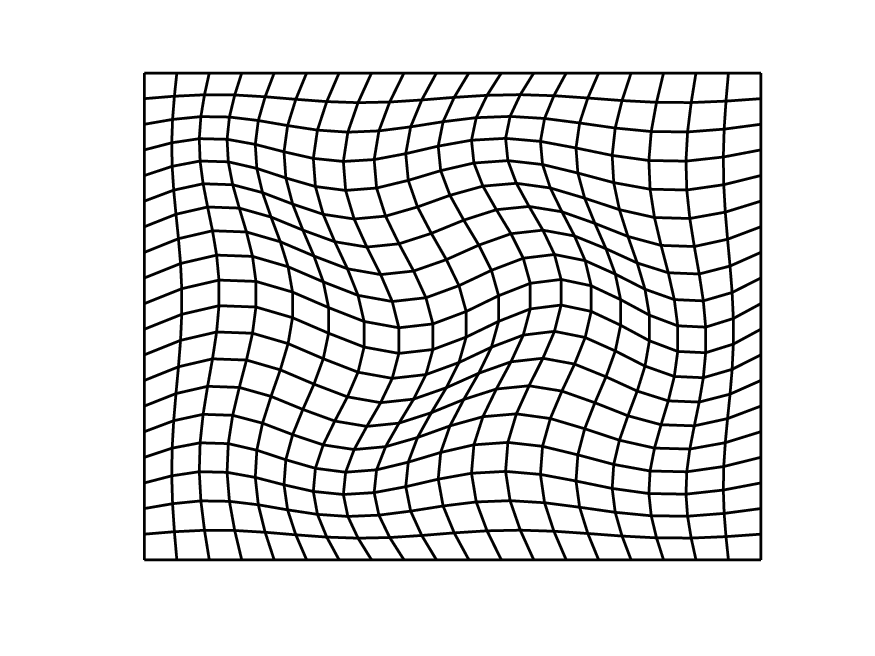}
        \caption{Mesh generated by $\Psi_2$}
    \end{subfigure}
    
    \caption{Curved-edge meshes generated by mapping  $\Psi_1$ and $\Psi_2$ }
    \label{fig:a1}
\end{figure}

\begin{figure}[htbp]  
    \centering  
    \begin{subfigure}{0.48\textwidth}
        \centering
        \includegraphics[width=\textwidth]{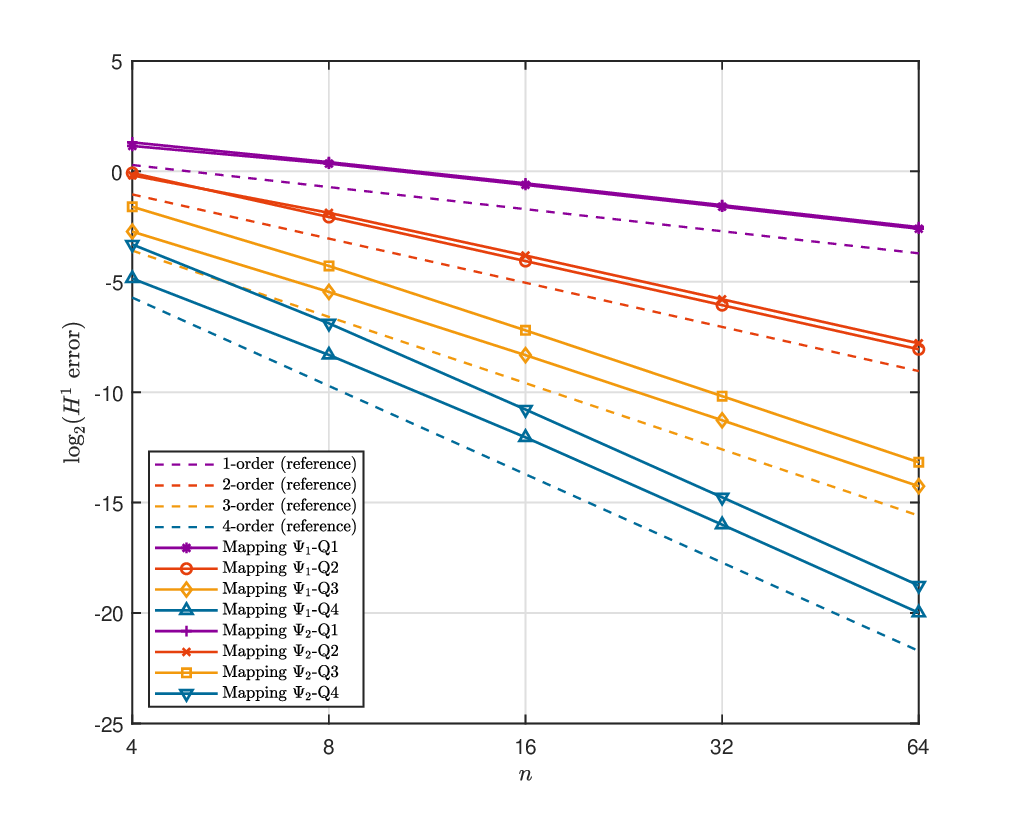}
    \end{subfigure}
    \hfill
    \begin{subfigure}{0.48\textwidth}
        \centering
        \includegraphics[width=\textwidth]{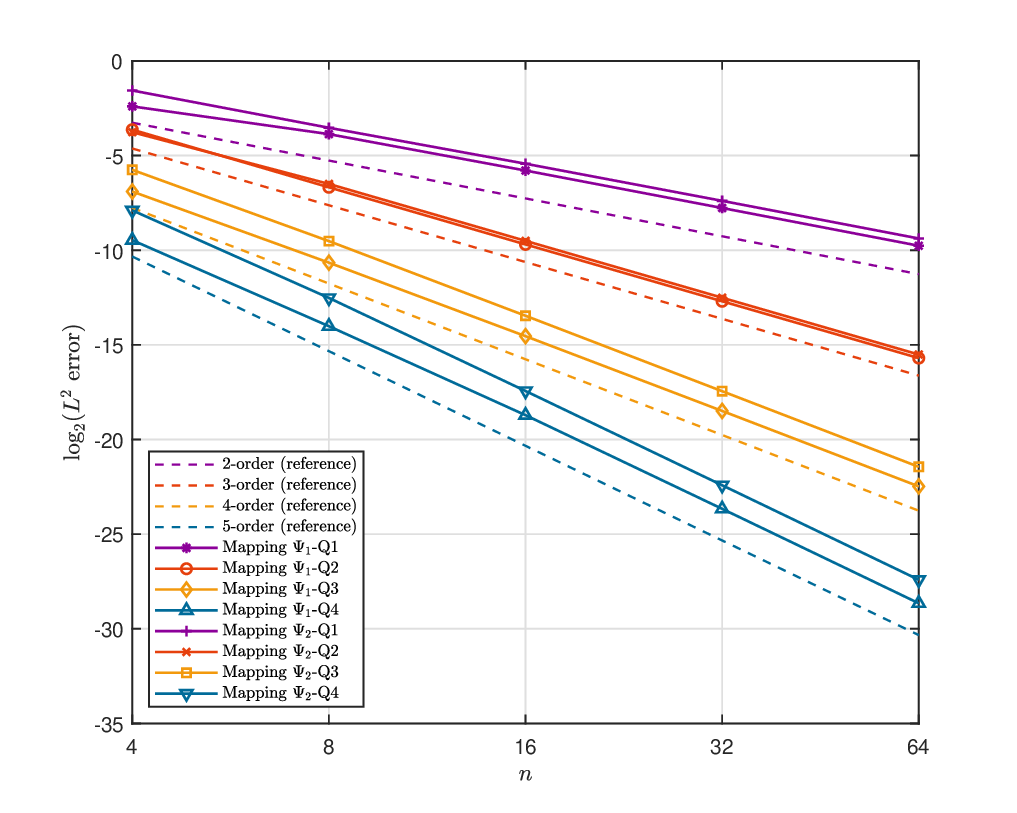}
    \end{subfigure}
    
    \caption{$H^1$ and $L^2$ errors in $\mathbb{Q}_1$-$\mathbb{Q}_4$ spaces for mapping $\Psi_1$-$\Psi_2$ when $\kappa=1$ }
    \label{fig2}
\end{figure}

\begin{figure}[htbp]  
    \centering  
    \begin{subfigure}{0.48\textwidth}
        \centering
        \includegraphics[width=\textwidth]{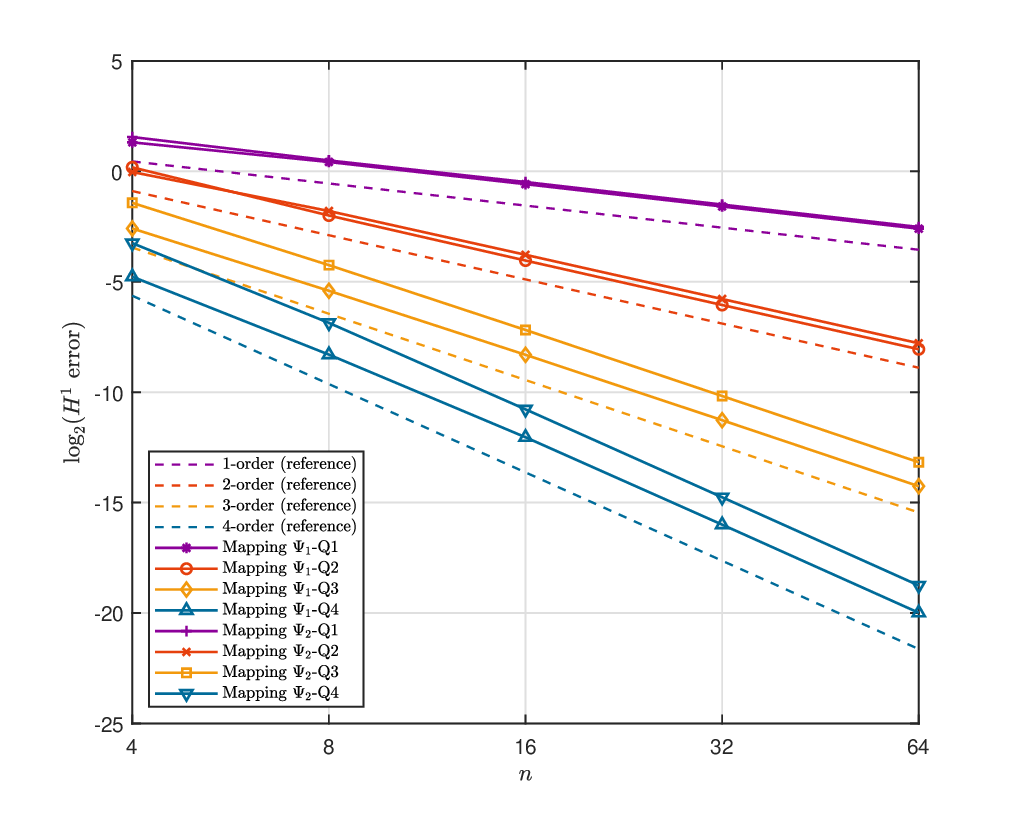}
    \end{subfigure}
    \hfill
    \begin{subfigure}{0.48\textwidth}
        \centering
        \includegraphics[width=\textwidth]{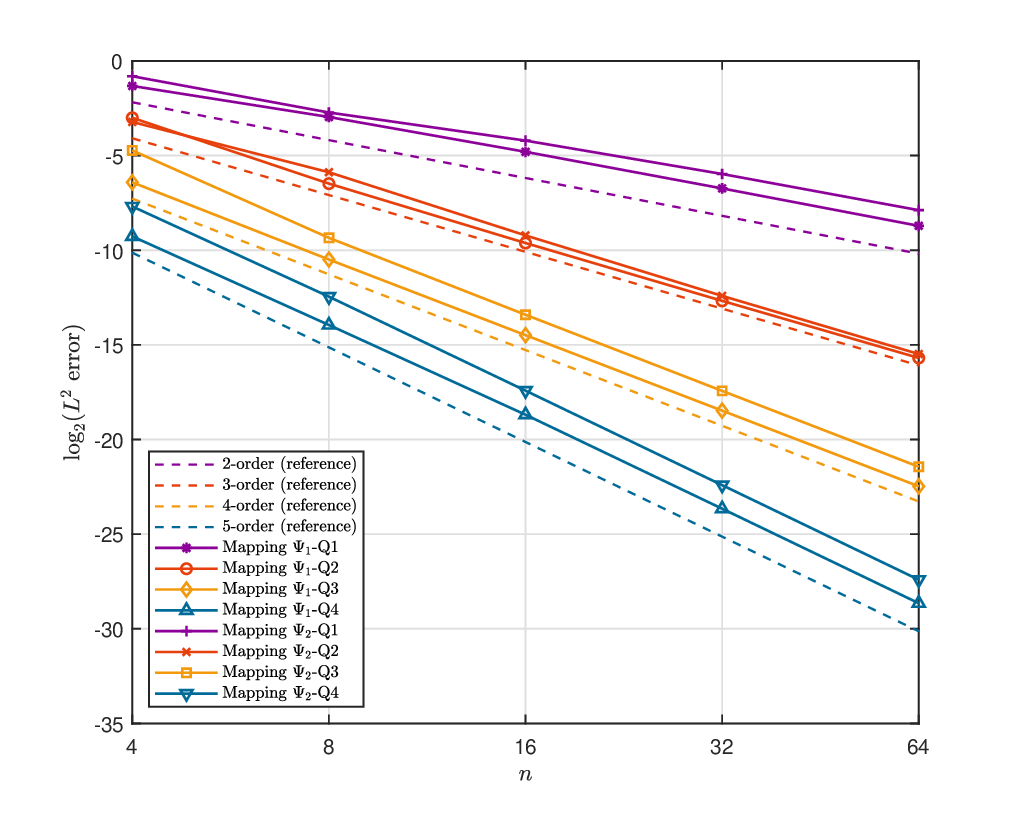}
    \end{subfigure}
    
    \caption{$H^1$ and $L^2$ errors in $\mathbb{Q}_1$-$\mathbb{Q}_4$ spaces for mapping $\Psi_1$-$\Psi_2$ when $\kappa=A$ }
    \label{fig3}
\end{figure}

\end{example}

\begin{example} 
This example uses the same solution and coefficients as Example 1. In dual partitioning, the most commonly used strategies for selecting dual nodes are based on equidistant points and Gaussian points (as shown in Figure \ref{fig:dual}). However, not all dual partitions achieve optimal convergence. Therefore, we selected these two typical schemes to conduct a comparative analysis of their numerical performance. To this end, we employ a mapping function taken from \cite{cms25}, defined by
\begin{equation}
\Psi_3=\left\{
\begin{aligned}
&x(\xi,\eta)=\xi+0.05\sin(4\pi\xi)\sin(4\pi\eta),\\
&y(\xi,\eta)=\eta+0.05\sin(4\pi\xi)\sin(4\pi\eta).
\end{aligned}
\right.
\end{equation}

\begin{figure}
\centering
\includegraphics[width=0.8\textwidth]{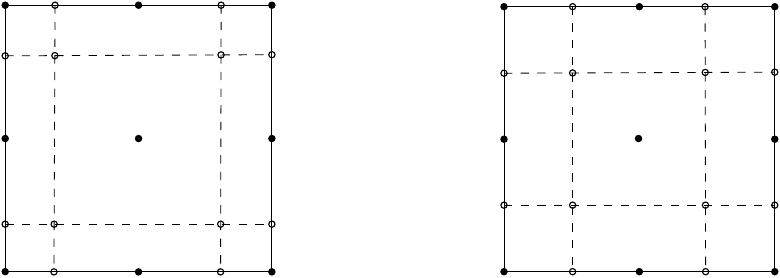}
\caption{Different dual partitions on $\widehat{K}$, 'dot' are interpolation points and 'circle' are dual points. Left: Gaussian dual partition. Right: equidistant dual partition }
\label{fig:dual}
\end{figure}

As shown in Figure~\ref{fig:a5}, the choice of dual partition points has a significant influence on the numerical accuracy. The solution obtained with the equidistant dual partition is visibly less accurate than that obtained with the Gaussian dual partition. This difference is also reflected in Table~\ref{exm33}, when equidistant points are used, the expected optimal rate \(\mathcal O(h^3)\) is not achieved. In contrast, the Gaussian dual partition yields the desired convergence behavior. Therefore, in all subsequent numerical tests, we construct the dual mesh using Gaussian quadrature points in order to obtain the best observed accuracy and the optimal convergence rate.

\begin{table}[!tbh]
\caption{$L^2$ convergence orders for $\mathbb{Q}_2$ elements with different dual partitions when $\kappa=A$}
\vspace{2mm}
\setlength{\tabcolsep}{4pt} 
\centering
\label{exm33}
\begin{tabular}{ccc|cc}
\hline
 & \multicolumn{2}{c|}{Equidistant partition} & \multicolumn{2}{c}{Gaussian partition} \\\hline
$1/h$ & $\|u-u_h\|_{0,\Omega_h}$ & order & $\|u-u_h\|_{0,\Omega_h}$ & order \\
\hline
$2$ & $3.11e-01$ & $-$ & $2.10e-01$ & $-$ \\
$4$ & $9.06e-02$ & $1.71$ & $5.35e-02$ & $2.53$ \\
$8$ & $1.15e-02$ & $2.96$ & $8.28e-03$ & $2.70$ \\
$16$ & $3.69e-03$ & $1.64$ & $8.94e-04$ & $3.21$ \\
$32$ & $1.00e-03$ & $1.86$ & $9.80e-05$ & $3.18$ \\
\hline
\end{tabular}
\end{table}

\begin{figure}[htbp]  
    \centering  
    \begin{subfigure}{0.32\textwidth}
        \centering
        \includegraphics[width=\textwidth]{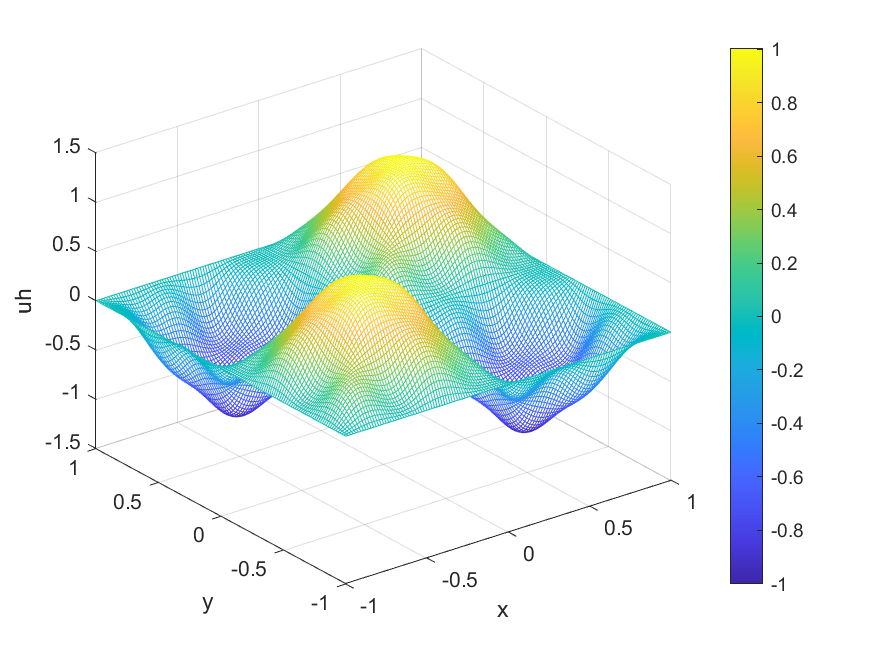}
    \end{subfigure}
    \begin{subfigure}{0.32\textwidth}
        \centering
        \includegraphics[width=\textwidth]{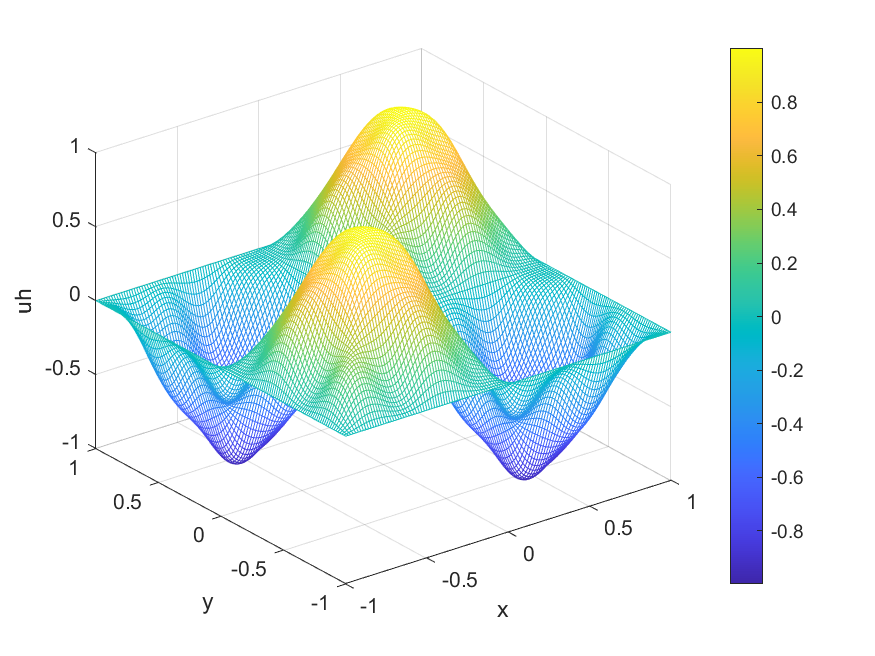}
    \end{subfigure}
        \begin{subfigure}{0.32\textwidth}
        \centering
        \includegraphics[width=\textwidth]{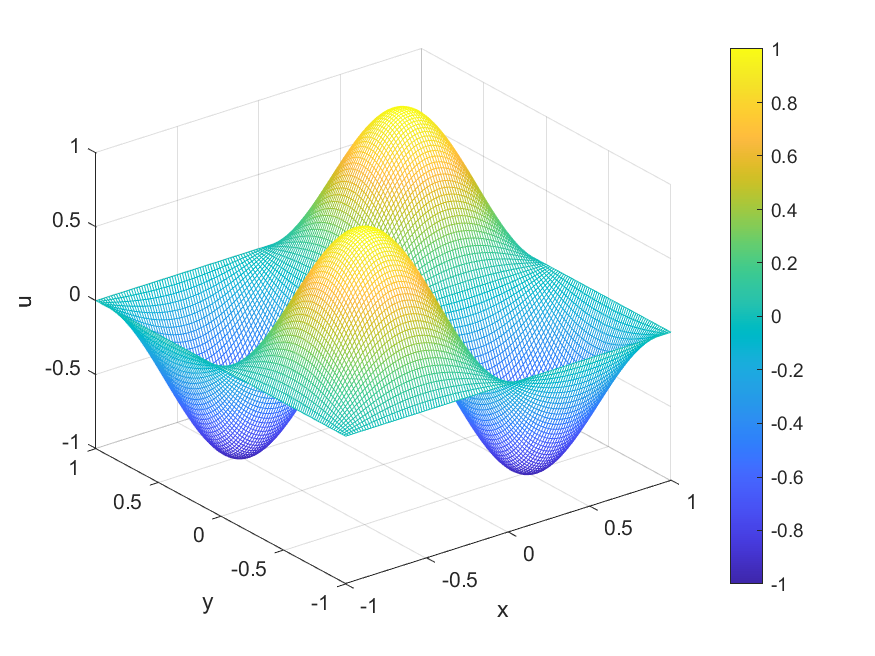}
    \end{subfigure}
    
    \caption{Images of numerical and exact solutions. Left: numerical solution in equidistant dual partition. Middle: numerical solution in Gaussian dual partition. Right: exact solution. }
    \label{fig:a5}
\end{figure}
\end{example}


\begin{example}
In this example, we investigate interpolation approximations for two domains with complex curved boundaries. For simplicity, we consider problems with zero boundary conditions, where $\kappa=1$. If the curved boundary admits an explicit analytical representation, the mapping \(\Psi_K\) can be constructed directly. If the boundary is prescribed only by discrete control nodes, the mapping has to be generated by polynomial interpolation. In this case, the accuracy of the geometric approximation becomes crucial. To retain the optimal convergence rate, the order of the boundary interpolation must be compatible with the smoothness and geometric complexity of the computational domain, otherwise, the geometric error may dominate the discretization error.

We first consider the quadratic annular domain shown in Figure~\ref{fig:a6}(b), which is discretized by a mesh with uniform spacing in both the radial and circumferential directions. The results in Table~\ref{exm5a} show that linear interpolation is not sufficient to approximate the quadratic boundary accurately. As a consequence, the optimal convergence rate is not achieved. In this case, the total error is dominated by the geometric contribution, of order \(h^{\alpha-1}\) in the \(H^1\)-norm and \(h^\alpha\) in the \(L^2\)-norm, rather than by the expected discretization errors \(h^k\) and \(h^{k+1}\). By contrast, quadratic interpolation is able to represent the quadratic boundary with the required accuracy, so that the geometric error no longer dominates and the optimal convergence rates are recovered. A similar phenomenon can be observed in Table~\ref{exm4a} for the domain shown in Figure~\ref{fig:a6}(a). This domain has a highly curved and geometrically complex boundary. Bilinear and bi-quadratic boundary interpolations do not provide sufficient geometric accuracy and therefore fail to deliver the optimal convergence orders. The resulting errors are again dominated by the boundary approximation error. In contrast, bi-cubic interpolation captures the boundary geometry more accurately and restores the optimal convergence behavior, with rates \(\mathcal O(h^4)\) in the \(L^2\)-norm and \(\mathcal O(h^3)\) in the \(H^1\)-norm. This indicates that, for this domain, at least third-order geometric interpolation is needed to resolve the boundary sufficiently well and to recover the optimal accuracy of the numerical scheme.

\begin{figure}[htbp]
\centering
\begin{subfigure}[b]{0.33\textwidth}
    \includegraphics[height=4.5cm, width=\textwidth]{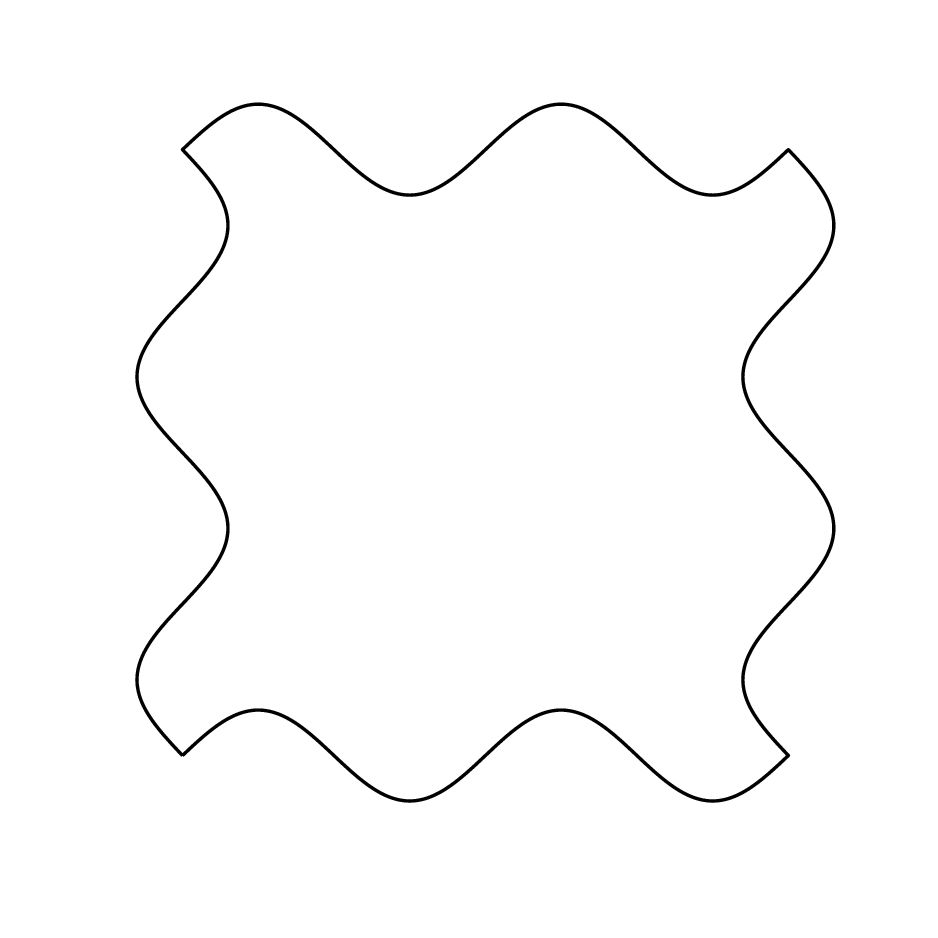}
    \caption{domain 1}
\end{subfigure}
\begin{subfigure}[b]{0.33\textwidth}
    \includegraphics[height=4.5cm, width=\textwidth]{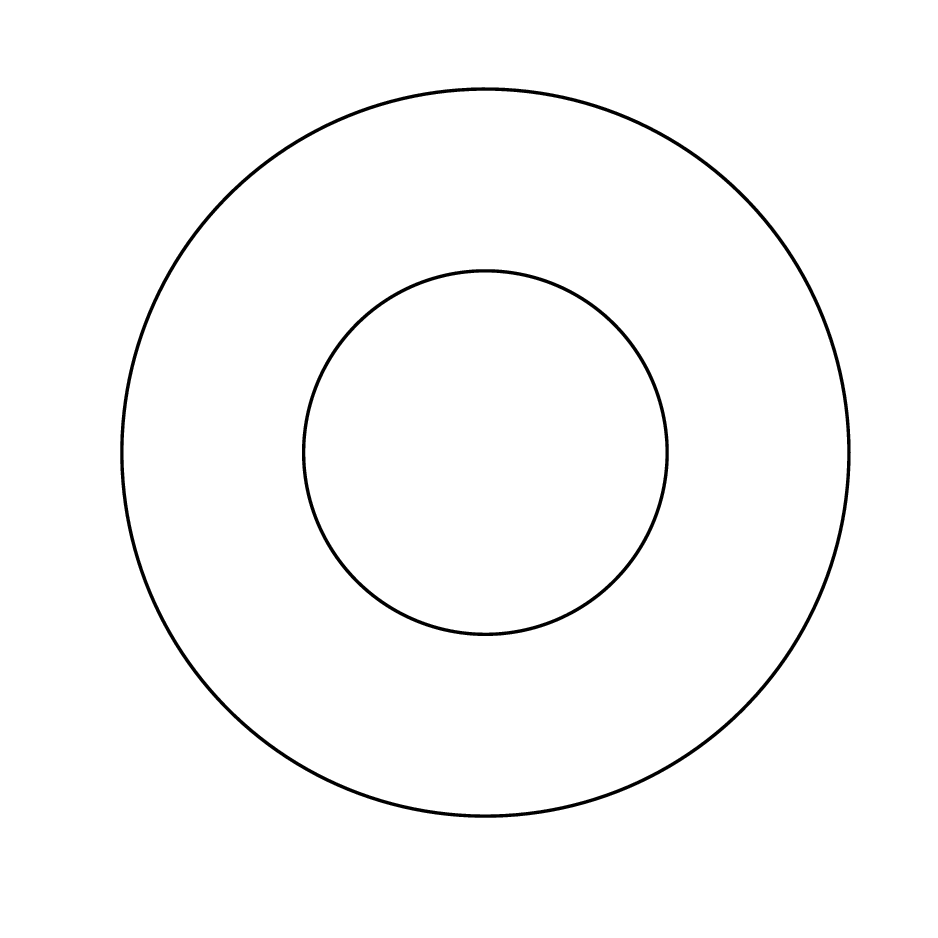}
    \caption{domain 2}
\end{subfigure}
\hfill
\begin{subfigure}[b]{0.30\textwidth}
    \includegraphics[height=4cm, width=\textwidth]{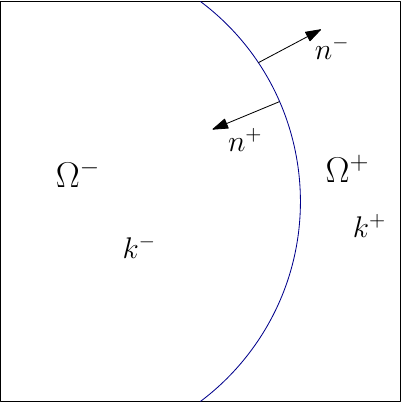}
    \caption{domain 3}
\end{subfigure}
\caption{(a): curved-edge domain, (b): annular domain, (c): interface domain }
\label{fig:a6}
\end{figure}

\begin{table}[!tbh]
\caption{ Numerical results of $\mathbb{Q}_2$ element for annular domain in Figure \ref{fig:a6}(b) using different interpolation }
\vspace{2mm}
\centering
\label{exm5a}
{\begin{tabular}{ cccccc}\hline
&$1/h$ & $ \| u-u_h\|_{0,\Omega_h}$ & order & $\| u- u_h\|_{1,\Omega_h}$  & order   \\ \hline
\multirow{5}{*}{Bilinear}  &$2^2\times 2^3$&  $2.4036e+00$ & $-$    & $6.5301e+00$ & $-$\\
                           &$2^3\times 2^4$&  $7.9121e-01$ & $1.60$ & $2.5874e+00$ & $1.34$\\
                           &$2^4\times 2^5$&  $2.0737e-01$ & $1.93$ & $8.8336e-01$ & $1.55$\\
                           &$2^5\times 2^6$&  $5.1874e-02$ & $1.99$ & $2.9873e-01$ & $1.56$\\
                           &$2^6\times 2^7$&  $1.2897e-02$ & $2.01$ & $1.0259e-01$ & $1.54$\\
                           &$2^7\times 2^8$&  $3.2106e-03$ & $2.00$ & $3.5688e-02$ & $1.53$\\
\hline
\multirow{5}{*}{Biquadratic}&$2^2\times 2^3$& $1.0793e-01$ & $-$    & $1.1226e+00$ & $-$\\
                           &$2^3\times 2^4$&  $1.3464e-02$ & $3.11$ & $2.8225e-01$ & $1.99$\\
                           &$2^4\times 2^5$&  $1.6771e-03$ & $3.00$ & $7.0535e-02$ & $2.00$\\
                           &$2^5\times 2^6$&  $2.0934e-04$ & $3.00$ & $1.7630e-02$ & $2.00$\\
                           &$2^6\times 2^7$&  $2.6157e-05$ & $3.01$ & $4.4071e-03$ & $2.00$\\
                           &$2^7\times 2^8$&  $3.2692e-06$ & $3.00$ & $1.1018e-03$ & $2.00$\\
\hline
\end{tabular}}
\end{table}

\begin{table}[!tbh]
\caption{Numerical results of $\mathbb{Q}_3$ element for curved-edge domain in Figure \ref{fig:a6}(a) using different interpolation}
\vspace{2mm}
\centering
\label{exm4a}
{\begin{tabular}{ cccccc}\hline
&$1/h$ & $ \| u-u_h\|_{0,\Omega_h}$ & order & $\| u- u_h\|_{1,\Omega_h}$  & order   \\ \hline
\multirow{5}{*}{Bilinear}  &$2^3$&  $2.3955e-02$ & $2.13$ & $2.3748e-01$ & $2.06$\\
                           &$2^4$&  $5.8982e-03$ & $2.02$ & $6.2332e-02$ & $1.92$\\
                           &$2^5$&  $1.5033e-03$ & $1.98$ & $1.7626e-02$ & $1.82$\\
                           &$2^6$&  $3.7868e-04$ & $1.99$ & $5.3003e-03$ & $1.73$\\
                           &$2^7$&  $9.4964e-05$ & $2.00$ & $1.6903e-03$ & $1.64$\\
\hline
\multirow{5}{*}{Biquadratic}&$2^3$&  $1.9164e-03$ & $2.61$ & $6.6058e-02$ & $2.47$\\
                           &$2^4$&  $1.7597e-04$ & $3.45$ & $1.0444e-02$ & $2.66$\\
                           &$2^5$&  $1.4156e-05$ & $3.64$ & $1.5579e-03$ & $2.74$\\
                           &$2^6$&  $1.1675e-06$ & $3.60$ & $2.3943e-04$ & $2.70$\\
                           &$2^7$&  $9.9148e-08$ & $3.57$ & $3.8630e-05$ & $2.64$\\
\hline
\multirow{5}{*}{Bicubic} &$2^3$&    $1.3496e-03$ & $3.46$ & $7.9747e-02$ & $2.21$\\
                           &$2^4$&  $1.2646e-04$ & $3.42$ & $1.2737e-02$ & $2.65$\\
                           &$2^5$&  $8.3210e-06$ & $3.93$ & $1.6748e-03$ & $2.93$\\
                           &$2^6$&  $5.2825e-07$ & $3.98$ & $2.1232e-04$ & $2.98$\\
                           &$2^7$&  $3.3149e-08$ & $3.99$ & $2.6642e-05$ & $2.99$\\
\hline
\end{tabular}}
\end{table}

Finally, we consider the interface problem shown in Figure~\ref{fig:a6}(c). The interface is described by
\[
\Theta(x,y)=x-0.5\cos\left(\frac{\pi y}{2}\right)=0.
\]
The diffusion coefficient \(\kappa\) is piecewise constant, taking values \(k^-\) and \(k^+\) in \(\Omega^-\) and \(\Omega^+\), respectively. The exact solution is chosen as
\[
u(x,y)=
\begin{cases}
(1-x^2)(1-y^2)\left(x-0.5\cos\left(\frac{\pi y}{2}\right)\right), & x\in\Omega^-,\\[2mm]
\dfrac{k^-}{k^+}(1-x^2)(1-y^2)\left(x-0.5\cos\left(\frac{\pi y}{2}\right)\right), & x\in\Omega^+,
\end{cases}
\]
so that the interface continuity conditions are satisfied. In the computation, we take \(k^+=1000\) and \(k^-=1\), which represents a strong jump in the material coefficient across the interface.

This example illustrates the importance of accurately resolving curved interfaces. When a straight-edge mesh is used, the curved interface \(\Theta=0\) cannot be represented exactly, and the resulting geometric error is amplified by the large coefficient contrast. As shown in Table~\ref{exm6a}, straight edge mesh leads to a loss of accuracy and prevents the method from achieving the optimal convergence order. In contrast, the curved-edge mesh fits the interface much more accurately and significantly reduces the geometric error. The numerical results show that the proposed method recovers the optimal convergence rates in both the \(L^2\)-norm and the \(H^1\)-norm, demonstrating the advantage of curved-edge meshes for interface problems with large coefficient jumps.

\begin{table}[!tbh]
\caption{Numerical results of $\mathbb{Q}_2$ element for interface problems on straight/curved edge meshes }
\vspace{2mm}
\setlength{\tabcolsep}{2pt} 
\centering
\label{exm6a}
\begin{tabular}{ccccc|cccc}
\hline
 & \multicolumn{4}{c|}{Curved-edge mesh} & \multicolumn{4}{c}{Straight-edge mesh} \\
\hline
$1/h$ & $\| u-u_h\|_{0,\Omega_h}$ & order & $\|u-u_h\|_{1,\Omega_h}$ & order & $\| u-u_h\|_{0,\Omega_h}$ & order & $\|u-u_h\|_{1,\Omega_h}$ & order\\
\hline
$2^3$ & $2.78e-01$ & $2.99$ & $5.69e+00$ & $2.02$ & $3.24e-01$ & $2.36$ & $4.34e+00$ & $1.92$\\
$2^4$ & $3.49e-02$ & $2.99$ & $1.42e+00$ & $2.00$ & $7.21e-02$ & $2.19$ & $1.14e+00$ & $1.93$ \\
$2^5$ & $4.37e-03$ & $3.00$ & $3.55e-01$ & $2.00$ & $1.69e-02$ & $2.07$ & $3.08e-01$ & $1.89$\\
$2^6$ & $5.47e-04$ & $3.00$ & $8.90e-02$ & $2.00$ & $4.17e-03$ & $2.02$ & $8.71e-02$ & $1.80$\\
$2^7$ & $6.85e-05$ & $3.00$ & $2.22e-02$ & $2.00$ & $1.03e-03$ & $2.01$ & $2.62e-02$ & $1.73$\\
\hline
\end{tabular}
\end{table}

\end{example}


\section{Appendix A: Derivative transformation of curlinear mesh} 
\label{app:A}

Since the curved-edge quadrilateral element \(K^h\) is obtained from the reference element \(\widehat K\) through the mapping  $
\Psi_K(\xi,\eta)=(x(\xi,\eta),y(\xi,\eta))$.
we now derive the relation between the directional derivatives on \(K^h\) and those on \(\widehat K\). Let \(v\) be a differentiable function on \(K^h\), and let \(\widehat{v}_{K^h}=v\circ\Psi_K\). Denote 
$\nabla v= (\frac{\partial v}{\partial x},\frac{\partial v}{\partial y})^T$
and 
$
\widehat\nabla \widehat v_{K^h}= (\frac{\partial \widehat{v}_{K^h}}{\partial \xi}, 
\frac{\partial \widehat{v}_{K^h}}{\partial \eta})^T$.
By the chain rule, we have 
\begin{equation}\label{eq3} 
\widehat\nabla \widehat{v}_{K^h}=\mathbb {J}_K\nabla v, 
\end{equation} 
where 
\[
\mathbb {J}_K=D\Psi_K 
= 
\begin{pmatrix} 
x_\xi & y_\xi\\ 
x_\eta & y_\eta 
\end{pmatrix}. 
\] 
For any unit vector \(\bm n=(n_1,n_2)^T\), the directional derivative of \(v\) in the direction \(\bm n\) is 
\begin{align} 
\frac{\partial v}{\partial\bm n} 
&=\bm n\cdot\nabla v 
=\bm n\cdot \mathbb {J}^{-1}_K\widehat\nabla \widehat{v}_{K^h} \nonumber\\ 
&= 
\frac{1}{J_K} 
\left[ 
\left(y_\eta n_1-x_\eta n_2\right) 
\frac{\partial\widehat{v}_{K^h}}{\partial\xi} 
+ 
\left(-y_\xi n_1+x_\xi n_2\right) 
\frac{\partial\widehat{v}_{K^h}}{\partial\eta} 
\right], 
\label{normal_derivative_general} 
\end{align} 
where \(J_K\) is the determinant of the Jacobian matrix \(\mathbb {J}_K\). 
 
We next consider two special normal directions. For fixed \(\eta\), the curve 
\(\Psi_K([-1,1],\eta)\) has tangent vector \((x_\xi,y_\xi)^T\), we choose the unit normal vector as 
$\bm n_\xi=\frac{1}{r_\xi}(-y_\xi,x_\xi)^T$
where $r_\xi=\sqrt{x_\xi^2+y_\xi^2}$.
For fixed \(\xi\), the curve \(\Psi_K(\xi,[-1,1])\) has tangent vector \((x_\eta,y_\eta)^T\), we choose the unit normal vector as $\bm n_\eta=\frac{1}{r_\eta}(y_\eta,-x_\eta)^T$
where $r_\eta=\sqrt{x_\eta^2+y_\eta^2}.$ Define $s=x_\xi x_\eta+y_\xi y_\eta.$
Substituting \(\bm n_\xi\) into \eqref{normal_derivative_general}, we obtain, across the curve \(\Psi_K([-1,1],\eta)\), 
\begin{equation}\label{eq4} 
\frac{\partial v}{\partial\bm n_\xi} 
= 
\frac{1}{r_\xi J_K} 
\left( 
-s\frac{\partial\widehat v_{K^h}}{\partial\xi} 
+ 
r_\xi^2\frac{\partial\widehat v_{K^h}}{\partial\eta} 
\right). 
\end{equation} 
Similarly, substituting \(\bm n_\eta\) into \eqref{normal_derivative_general}, we obtain, across the curve \(\Psi_K(\xi,[-1,1])\), 
\begin{equation}\label{eq5} 
\frac{\partial v}{\partial\bm n_\eta} 
= 
\frac{1}{r_\eta J_K} 
\left( 
r_\eta^2\frac{\partial\widehat v_{K^h}}{\partial\xi} 
- 
s\frac{\partial\widehat v_{K^h}}{\partial\eta} 
\right). 
\end{equation}

\section{Conclusion}
In this paper, we develop a high-order finite-volume method on a curved-edge quadrilateral mesh based on the Gaussian dual partition. Under given assumptions regarding mesh regularity and geometric approximation, we prove the stability of this method and derive \(H^1\) and \(L^2\) error estimates that explicitly account for geometric errors. Finally, three sets of numerical experiments validate the theoretical results and demonstrate the effectiveness of this method.

\textbf{Acknowledgments}
This work is supported by the National Key Laboratory of Computational Physics (Grant No. SYSM-2006-WDZC-13) and the National Natural Science Foundation
of China (Grant No. 12671493).


\end{document}